\theoremstyle{definition}
\newtheorem{definition}{Definition}[chapter]
\newtheorem{lemma}[definition]{Lemma}
\newtheorem{proposition}[definition]{Proposition}
\newtheorem{theorem}[definition]{Theorem}
\newtheorem{corollary}[definition]{Corollary}
\newtheorem{example}[definition]{Example}
\newtheorem{conjecture}[definition]{Conjecture}
\newtheorem{game_rule}[definition]{Rule}
\theoremstyle{remark}
\newtheorem{remark}[definition]{Remark}
\newtheorem{historical_remark}[definition]{Historical Remark}
\title{Transfinite game values in infinite games}
\author{Davide Leonessi}
\date{Academic year 2020-21}
\begin{document}
\makeatletter
\begin{titlepage}

\vspace{8cm}

\begin{center}

~

~

{\huge  \@title }\\[25ex] 
\includegraphics[width=40mm]{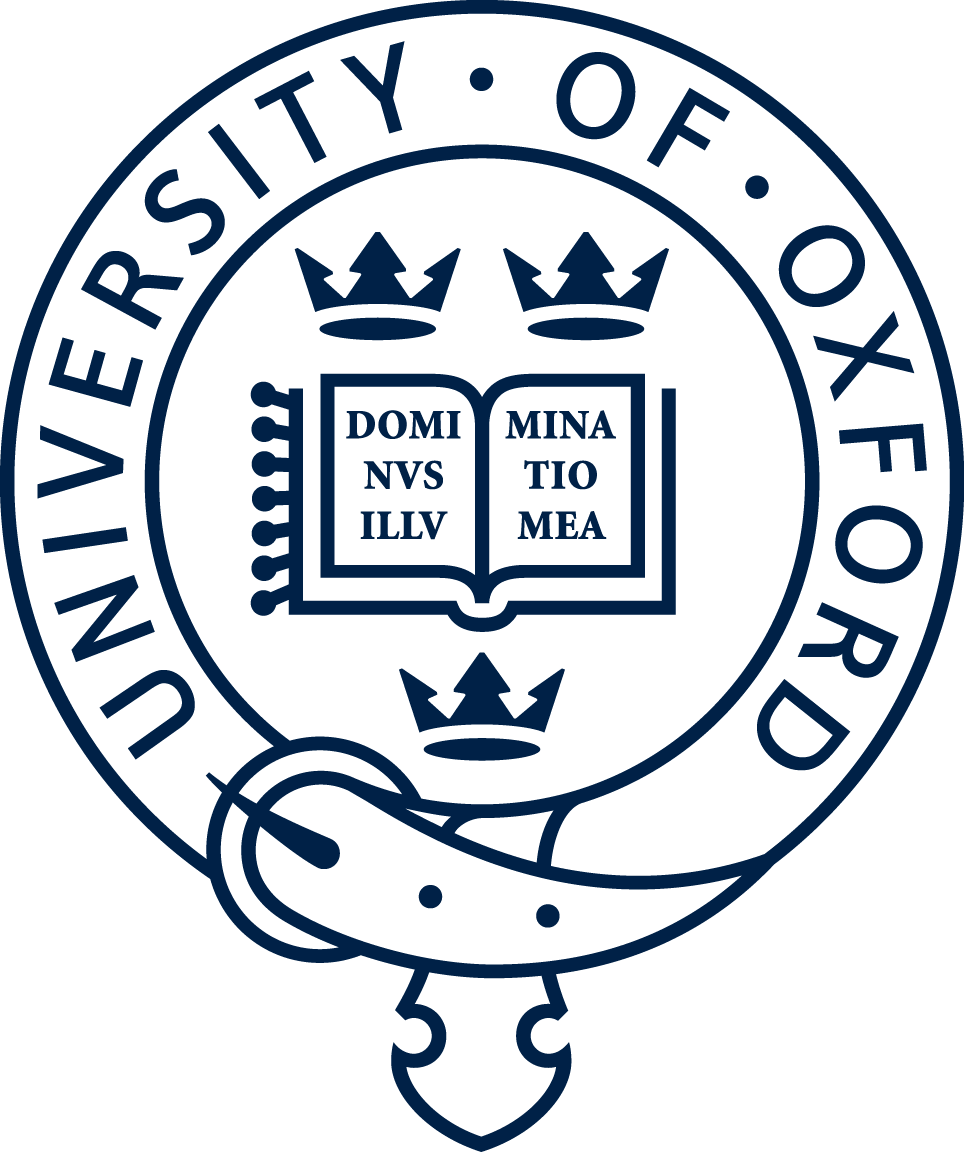}\\[18ex]
{\Large \@author}\\[4ex]
{\Large Supervised by Prof.~Joel David Hamkins}\\[12ex] 
{\Large MSc in Mathematics and Foundations of Computer Science}\\[4ex]
{\Large \@date}\\

\end{center}
\end{titlepage}
\makeatother

\newpage
\thispagestyle{empty}
\begin{abstract}
The object of this study are countably infinite games with perfect information that allow players to choose among arbitrarily many moves in a turn; in particular, we focus on the generalisations of the finite board games of Hex and Draughts.

\medskip

In Chapter \ref{ch_inf_games} we develop the theory of transfinite ordinal game values for open infinite games following \cite{hamkins14}, and we focus on the properties of the \emph{omega one}, that is the supremum of the possible game values, of classes of open games; we moreover design the class of climbing-through-$T$ games as a tool to study the omega one of given game classes.

\medskip

The original contributions of this research are presented in the following two chapters.

\medskip

In Chapter \ref{chapter_hex} we prove classical results about finite Hex and present Infinite Hex, a well-defined infinite generalisation of Hex.

We then introduce the class of \emph{stone-placing} games, which captures the key features of Infinite Hex and further generalises the class of positional games already studied in the literature within the finite setting of Combinatorial Game Theory.

The main result of this research is the characterization of open stone-placing games in terms of the property of \emph{essential locality}, which leads to the conclusion that the omega one of any class of open stone-placing games is at most $\omega$.
In particular, we obtain that the class of open games of Infinite Hex has the smallest infinite omega one, that is $\omega_1^{\rm Hex}=\omega$.

\medskip

In Chapter \ref{chapter_draughts} we show a dual result; we define the class of games of Infinite Draughts and explicitly construct open games of arbitrarily high game value with the tools of Chapter \ref{ch_inf_games}, concluding that the omega one of the class of open games of Infinite Draughts is as high as possible, that is $\omega_1^{\rm Draughts}=\omega_1$.
\end{abstract}

\newpage 
\vspace*{\stretch{1}}
\pdfbookmark{Dedication}{dedication}
\thispagestyle{empty}
\begin{center}
  \large \emph{To my Mother,\\
  who gave me the strength to embark on my journey.}
\end{center}
\vspace{\stretch{3}}

\newpage
\pagenumbering{roman}
\tableofcontents
\newpage
\pagenumbering{arabic}
\chapter{Infinite Games}\label{ch_inf_games}
The object of the present study are games played on infinite structures that generalise finite board games and that can thus allow players to choose among arbitrarily many moves in a turn.

In game-theoretic terms, we will consider two-player zero-sum infinite games with perfect information and no chance moves; we will define such games in extensive form, so to represent them as their induced game tree.\footnote{Infinite games in extensive form were first studied by \cite{gs53}.}

\medskip

We will focus on the class of games in which the order type of each play is at most $\omega$, so that plays consist of at most countably many turns, and, at each turn, players can choose among at most countably many moves. The game trees of such games are naturally seen as sub-trees of the following.

\begin{definition}\label{def_full_tree}
The \emph{full countable tree} $\omega^{<\omega}$ is a directed tree whose vertex set is the set of finite sequences of natural numbers, that is
\[\omega^{<\omega}=\bigcup_{n\in\omega} \omega^n= \bigcup_{n\in\omega} \{v:n \rightarrow\omega\},
\]
and whose directed edges order such finite sequences by inclusion.
\end{definition}

Note that we can label the edges of the full countable tree by ordinals smaller than $\omega$ so that each node $v$ is exactly the sequence of edge labels that compose the unique path that ends at $v$ and starts at the root node $\emptyset$, i.e.~the empty sequence.

Observe that the set of \emph{branches} of $\omega^{<\omega}$, which are the maximal paths in $\omega^{<\omega}$ starting at the root node $\emptyset$, is the collection of infinite sequences $\omega^\omega=\{s:\omega \rightarrow \omega\}$. 

\medskip

For the rest of this work, we will say that a triple $T=(V,E,v_0)$ is a \emph{tree} if $T$ is a directed tree with vertex set $V$, edge set $E$, and root node $v_0\in V$, which can be embedded into the full countable tree $\omega^{<\omega}$; that is, there is a map $\widetilde{\iota} :T \rightarrow \omega^{<\omega}$ induced by an injection $\iota :V \hookrightarrow \omega^{<\omega}$ which preserves the tree structure.\footnote{Note that $\omega^{<\omega}$ refers to both the vertex set of the full countable tree and the tree itself.}

If $(u,v)\in E$ is an edge of $T$, then we say that $v$ is a \emph{child} node, or \emph{immediate successor}, of $u$; conversely, we say that $u$ is the \emph{parent} node, or \emph{immediate predecessor}, of $v$; we write $u\rightarrow_T v$.

With this notation we can say that the injection $\iota :V \hookrightarrow \omega^{<\omega}$ induces a structure-preserving embedding in the sense that it preserves the root node and the edges, so that $\iota(v_0)= \emptyset$ and $\iota(u)\rightarrow_{\omega^{<\omega}} \iota(v)$ if and only if $u\rightarrow_T v$.

\bigskip

We can now define the games we consider in extensive form.

\begin{definition}\label{def_game}
A \emph{game} $\mathcal{G}$ is a collection of objects $((V,E,v_0),S,S_1,S_2)$, such that;
\renewcommand{\theenumi}{\roman{enumi}}
\begin{enumerate}
    \item $V$ is the set of \emph{positions}, and $v_0\in V$ is the \emph{initial position};
    
    \item\label{def_game_tree} $(V,E,v_0)$ is a tree, which we call the \emph{game tree};
    
    \item\label{turns} At each position, a player chooses a child node of that position in the game tree as the next position; the players alternate making moves and the turn is determined by the parity of the number of moves already made;
    
    
    \item\label{def_play} $S$ is the collection of branches of $(V,E,v_0)$, which we call the set of \emph{plays};
    
    \item\label{winning_set} $S_1,S_2\subset S$ are the \emph{winning conditions} of, respectively, the first and second player, such that $S_1\cap S_2= \emptyset$;
    a play $s\in S$ is a \emph{win} for the first or second player if, respectively, $s\in S_1$ or $s\in S_2$;
    moreover, a play $s\in S$ is a \emph{draw} if $s\not\in S_1\cup S_2$.
\end{enumerate}
\end{definition}

\begin{remark}\label{rmk_position}
As the game tree $T$ of condition \ref{def_game_tree}.~is a directed rooted tree, we are assuming that every non-initial position has a unique predecessor; since each branch of such $T$ has order type at most $\omega$, then each position defines a unique history of finitely many moves from the initial position $v_0$.
In other words, any partial play consists of only finitely many moves.

Hence, we can see that condition \ref{turns}.~has a well-defined meaning in the sense that the first player is the next to move at all positions whose unique path from the initial position comprises an odd number of vertices; this includes the initial position $v_0$. Conversely, the second player moves at all positions whose unique path from the initial position consists of an even number of positions.
\end{remark}

\begin{remark}
Note that in the majority of popular board games occurs some \emph{board position}, or \emph{board configuration}, which can be achieved by several distinct move sequences; we consider each move sequence as leading to distinct \emph{positions}.

Thus, we can see that positions carry more information than board positions; in addition to determining whose turn it is, a position uniquely specifies the history of moves from the initial position, and so all the board positions already achieved over the play.
\end{remark}

\begin{remark}
In the games that we consider in this work we will show that not both players can win, satisfying the disjointness of $S_1,S_2$ required in condition \ref{winning_set}.

More in general, plays that allow both players to win can be considered draws without loss of generality.
\end{remark}

\begin{remark}\label{rmk_countability}
As mentioned above, we are interested in games which feature positions with infinitely many successors, that is, with some positions $v\in V$ which have infinitely many children nodes.

It is important to note that the games defined here allow each position to have at most \emph{countably} many successors; this follows from our definition of trees as subtrees of the full countable tree.
\end{remark}

\medskip

Fixing a game $\mathcal{G}$ as in definition \ref{def_game}, we define the following common game-theoretic concepts; the definitions for the first player have a natural analogue for the second player.

\begin{definition}
Say that $V_1\subset V$ is the collection of positions at which the first player is the next to move.
A \emph{strategy} for the first player of $\mathcal{G}$ is a function $\sigma:V_1\rightarrow V$ such that $\sigma(v)$ is a child node of $v$ in the game tree of $\mathcal{G}$ for all $v\in V_1$.
\end{definition}

We say that a play $s\in S$ \emph{conforms to} a strategy $\sigma$ for the first player if, for any position $v$ in $s$ at which the first player is the next to move, i.e.~$v\in V_1$, we have that the successor of $v$ in $s$ is $\sigma(v)$;
in other words, the second player has a way to play the game that leads to the play $s$, when the first player follows $\sigma$.

\begin{definition}
A \emph{winning strategy} for the first player of $\mathcal{G}$ is a strategy $\sigma$ such that, for any play $s\in S$ conforming to the strategy $\sigma$ for the first player, we have that $s\in S_1$; i.e.~any play that conforms to such strategy is a win for the first player.
\end{definition}

Recall that the set of branches of the full countable tree is $\omega^\omega$; we can turn this set of sequences into a space by imposing on it the product topology induced by the discrete topology on $\omega$.

A base for such topology is given by the collection of position neighbourhoods $\mathcal{U}(v)$ for $v\in \omega^{<\omega}$, where $\mathcal{U}(v)=\{s\in\omega^\omega:v\subset s\}$, i.e.~$\mathcal{U}(v)$ is the collection of all branches which have the path from $v_0$ to $v$ as an initial segment.\footnote{Clearly, $\mathcal{U}(\emptyset)=\omega^\omega$.}

\medskip

We can similarly impose a topology on the set of branches $S$ of any tree $T=(V,E,v_0)$.
Let $\{\mathcal{U}(v)\}_{v\in V}$, the collection of neighbourhoods of finite paths in $T$, be the base for the topology on $S$.

Observe that this topology is equivalent to the subspace topology induced on $S$ from $\omega^\omega$ via any structure-preserving embedding $\widetilde{\iota}: T\rightarrow \omega^{<\omega}$.

In particular, if $T$ is the game tree of some game $\mathcal{G}$, then we can impose this topology on the set of plays $S$; in this case we can interpret the neighbourhood $\mathcal{U}(v)$ of a position $v$ to be the collection of plays that complete the partial play up to $v$.

Now, if the winning condition $S_1\subset S$ is open, then $S_1$ is equal to a union of position neighbourhoods $\bigcup_{i\in\mathcal{I}} \mathcal{U}(v_i)$ for some nodes $v_i\in V$; hence, for a given play $s\in S_1$, there is some node $v_j$ such that $s\in \mathcal{U}(v_j) \subset S_1$.

Moreover, any play $s'$ that contains $v_j$ is necessarily a win for the first player, as $s'\in \mathcal{U}(v_j) \subset S_1$; that is, the outcome of any play that at some point reaches $v_j$ is determined at that position and cannot be changed by any move made after $v_j$ by either player.
We can then say that any such play $s'$, including $s$, is ended in $v_j$, and is thus finite or \emph{essentially finite}.\label{essentially_fin}

\medskip

We can formally define an essentially finite play as a an infinite play whose outcome is the same of all the plays in the neighbourhood of one of its positions; that is, a play $s\in S$ is essentially finite if $s\in S_1$ (or, respectively, $S_2$, $(S_1\cup S_2)^c$) and there is some position $v$ in $s$ such that $\mathcal{U}(v)\subset S_1$ (or, $S_2$, $(S_1\cup S_2)^c$).
We can observe that there is no need to distinguish between such plays and finite plays, so that we will not.

In fact, consider a finite play $s$ of, say, usual Chess; the leaf node in $s$ is the ending position, which occurs after finitely many turns, according to the standard rules.
Then, we can extend $s$ to an infinite sequence of moves by allowing the players to make trivial moves in which they acknowledge the end of the game by alternately saying ``OK" to each other ad infinitum.

However, we can naturally define the ``actual" length of a (essentially) finite play $s$ as the minimal\footnote{It is immediate to see that if $v\in s$ and $s\subset \mathcal{U}(v)$, then also $s\subset \mathcal{U}(w)$ for all the descendants $w$ of $v$ in $s$.}
 $n$ for which there is a sequence of $n$ moves from the initial position to a node $v\in s$ and $s\subset \mathcal{U}(v)$.

\bigskip

We will focus on games which, even though infinite, enjoy the following finiteness property.

\begin{definition}\label{def_open}
A game is \emph{open} for one player if any win by that player takes place in finitely many moves; that is,\footnote{As remarked by \cite[\textsection 1]{hamkins17}.} if the winning condition for that player is open in the space of plays.
\end{definition}

We similarly define the following.

\begin{definition}
A game is \emph{closed} for some player if his winning condition is closed in the space of plays; that is, all the plays which are not wins for that player are determined to be so in finitely many moves.\footnote{However, it may not be determined in only finitely many moves whether such plays are draws or wins for the player's opponent.}

A game is \emph{clopen} for some player if it is open and closed for that player.
\end{definition}

\begin{remark}
If a game has no draws, then it is clopen for some player if and only if it is clopen for that player's opponent.

More in general, this implication holds when the set of draws is clopen in the space of plays.
\end{remark}


\medskip

In the majority of board games, players have essentially analogous goals, so that the direct infinite generalisations of such games are often open for both players,\footnote{A game open for both players is not necessarily clopen, as it may allow draws to occur with infinite plays, so that the game may not be closed for either player.} as we can see for instance in Chess, which was studied in detail by \cite{hamkins14}.

\begin{example}Infinite Chess.\label{infinite_chess}

This game is played on the infinite chessboard, which is a square tiling of the plane in bijection with $\mathbb{Z}\times\mathbb{Z}$, and features the usual Chess pieces, generalising the long-distance ones, i.e.~rook, bishop, and queen, to move for arbitrarily large, but finite, distances on the board.

The winner of a game of Infinite Chess is naturally defined as the first player that places the opponent's king in a checkmate position, which can only happen after finitely many moves.
It follows that all the plays that involve infinitely many turns are draws.

As a consequence of the definition of winning condition, Infinite Chess is an open game for both players, White and Black, given any starting position.

We can modify the winning conditions to make Infinite Chess not open for one player. Suppose we have an initial position in which Black has one king, while White has none; allow White to win if he satisfies the previous checkmate condition, and let Black win if the game continues for infinitely many turns,\footnote{Such an infinite play would be a tie according to the previous winning conditions.} so that his king is never in checkmate.
This game is clearly open for White, but not for Black.
\end{example}


This example highlights the importance of both the winning conditions and the initial position in the definition of a game.

We will see later another game open for only one player; the Angel and Devil game in Example \ref{angel_devil}.

\section{Game values}\label{sec_game_values}
The class of open games allows for the definition of ordinal game values within the framework of infinitary game theory.

We present the theory of transfinite game values in infinite games as developed by \cite{hamkins14}.

\medskip

We approach the concept of game value by borrowing another idea from Chess.

\begin{example}\label{mate_in_n}
The mate-in-$n$ problem.

Often presented as a puzzle for small values of $n$, a Chess position is defined to be \emph{mate-in-$n$} for White if there is a winning strategy for White such that all plays conforming with it are wins for White after at most $n$ moves\footnote{Clearly White could play sub-optimally, without winning in $n$ moves; this does not change the fact that the position itself is mate-in-$n$.} and, moreover, there is a strategy for Black such that all plays conforming with it comprise at least $n$ moves;\footnote{In both cases, only the moves by White are counted.}
we call this strategy for White a \emph{win-in-$n$} strategy.

In such a situation we can think of White as being at most $n$ moves away from winning; if $n$ is minimal, then the position is said to have game value $n$ for White.

For instance, observe that White has already won in a mate-in-0 position, and White can force a checkmate from a mate-in-1 position on his next move; in the latter case, White's move may depend on Black's move, if White plays second.
\end{example}

We can generalise this concept to the infinite case for the smallest infinite ordinal, $\omega$.

An Infinite Chess position is defined to have game value $\omega$ for White if White has a strategy to win in finitely many moves, but Black is next to move and his initial choice determines a finite lower bound for the number of turns that need to be played before Black's inevitable defeat.

In other words, Black makes a move from a position with game value $\omega$ to a mate-in-$n$ position, where he can choose $n\in\mathbb{N}$ to be arbitrarily large.

\medskip

Observe the example\footnote{\cite[Figure 5]{hamkins14}} of a position with game value $\omega$ for White in figure \ref{fig_chess_omega} which features infinitely many pawns, as indicated by the pattern.

In that position, Black is the next to move and the central Black rook is attacked by a pawn; the initial move by Black is moving the rook up the empty corridor for an arbitrarily long, but finite, distance.

White can then capture the Black rook with a pawn and systematically advance the pawns below the resulting empty square, until a White rook is let free to mate the Black king.
The Black pawn on the side prevents a stalemate.

\begin{figure}[h]
\centering
\includegraphics[width=4cm]{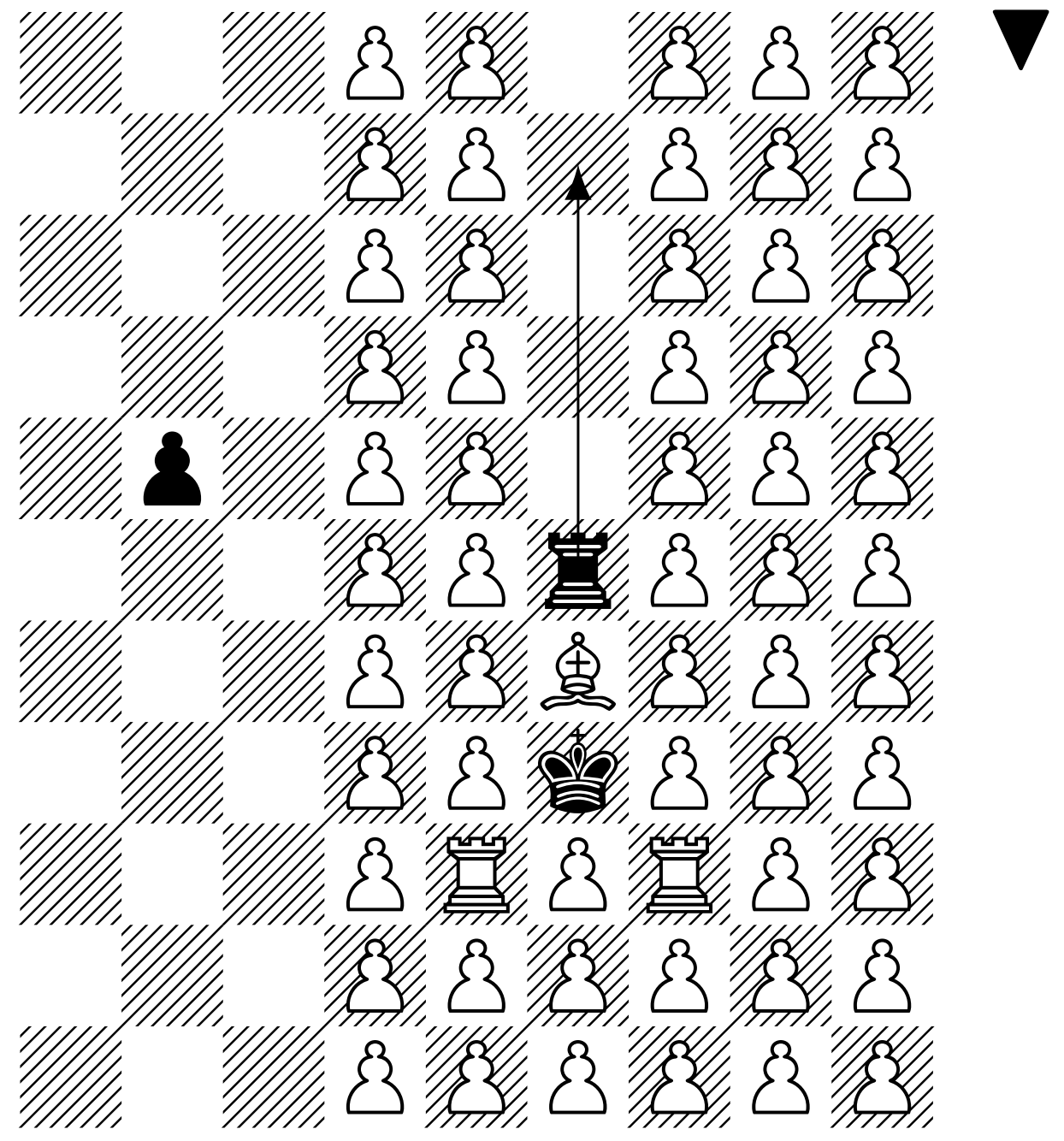}
\caption{Infinite Chess position with game value $\omega$ for White, Black to move.}
\label{fig_chess_omega}
\end{figure}

We have now given enough motivation to define the following.

\begin{definition}\label{def_game_value}
The \emph{game value} of a position $p$ in a game open for a player $O$, playing against $C$, is defined by recursion as follows;

if $O$ has already won in $p$, then $p$ has game value 0;

if $O$ is the next to move in $p$, and $\alpha$ is the smallest ordinal among the game values of the positions\footnote{Only the positions for which a game value can be defined.} that can be reached from $p$ via a legal move by $O$, then $p$ has game value $\alpha+1$;

if instead $C$ is the next to move in $p$, and all the positions that can be reached from $p$ via a legal move by $C$ have a defined game value, then the value of $p$ is the supremum of all these values.
\end{definition}

Note that this definition may not apply to all positions of a game; the positions with defined game value for some player are exactly the positions from which that player can force a win, which is reached in finitely many moves by openness of the game.

\medskip

Observe that Definition \ref{def_game_value} reflects the rational behaviour of the players in a game, from the perspective of player $O$.
The \emph{open player} $O$ aims to reduce the game value as quickly as possible, when it is defined, and to reach a position with defined value otherwise;
on the other hand, the \emph{closed player} $C$ has the opposite aim and tries to keep the game value undefined, if possible, or as high as possible, when the value is defined.

\medskip

Suppose that a position $p$ has ordinal value $\alpha>0$ for $O$, so that, by definition of game value, if $O$ is the next to move from $p$, then $O$ can reach a position with game value strictly smaller than $\alpha$;
if instead $C$ is the next to move from $p$, then $C$ can only reach positions with game value $\beta\le\alpha$, in particular $\beta<\alpha$ if $\alpha$ is a limit ordinal.

Thus, $O$ can play from $p$ so to strictly reduce the game value at each turn, while $C$ cannot reach a position with higher, or undefined, game value; we call this the \emph{value-reducing} strategy for the winning player.
By well-foundedness of the ordinals, it is clear that a winning player following such a value-reducing strategy does eventually reach a winning position of value 0 in finitely many turns, as there are no infinite descending sequences of ordinals.

Conversely, if $p$ has no defined game value for $O$, then $O$ cannot make a move to reach another position with defined game value, as otherwise $p$ would have had a game value in the first place.
Moreover, starting from such $p$, there is at least one position with undefined game value that $C$ can reach with one move, again by Definition \ref{def_game_value}; this gives rise to the \emph{value-maintaining} strategy, with which $C$ can keep the game value undefined at all positions reached over the play.

We conclude that the positions without a game value for $O$ are exactly the positions from which $C$ can force either a draw or a win, meaning that $C$ has a drawing or winning strategy for the game starting at $p$.

\medskip

Furthermore, if a game has no draws and is open for both players,\footnote{Such a game is also clopen.} then the positions with defined game value for one player have undefined value for the other, and vice versa.\footnote{This is true since we only consider games in which at most one player wins, as per the definition of winning condition in \ref{def_game}.\ref{winning_set}.}

\medskip

We will refer to the game value of a game to mean the game value of its initial position.

\bigskip

We can provide a visual intuition of what it means for a position $p$ to have game value for $O$ by fixing a winning value-reducing strategy for the open player $O$ and thus observing that the way the play unfolds depends entirely on the losing player $C$; by considering the ``partial game tree" that represents only the choices available to $C$, we obtain the structure of the following game.

\begin{example}\label{climb_tree}
The climbing-through-$T$ game.

Let $T$ be a tree; recall that $T$ is a rooted directed tree whose branching nodes have at most countable degree, and whose branches have at most order type $\omega$.

Let two players, Climber and Observer, play the following game.

Climber aims to climb up the tree, starting from the root; at each turn, if Climber is on a branching node, then he can move to one of its successor nodes, if instead Climber is standing on a leaf node, then he loses.

On the other hand, Observer does not directly influence the game; Observer simply acknowledges Climber's moves by saying ``OK", until Climber reaches a leaf node, making him win.

\medskip

This is clearly a game open only for Observer, as Climber wins exactly when he manages to climb up an infinite branch of $T$, thus playing for infinitely many turns.

In particular, Climber can win if and only if $T$ has an infinite branch, and Observer has a (trivial) winning strategy if and only if $T$ is well-founded, that is, if all the branches of $T$ have finite length.

\medskip

The simplicity of climbing-through-$T$ games enables us to assign values easily to such games; these will be a key tool in the construction of Infinite Draughts positions in Chapter \ref{chapter_draughts}.

\medskip

Observe that $\rightarrow_T$ is a strict partial order\footnote{A \emph{strict order} is an irreflexive and transitive relation, which is thus antisymmetric.} on the vertex set of $T$; we can thus apply the theory of strict partial orders on well-founded trees, as described in \cite[\textsection III.5]{kunen13}, to establish a link between tree rank and game value.

Define the \emph{rank}, or \emph{height}, of a node of a well-founded tree as ${\rm rank}(u)={\rm sup}\{{\rm rank}(v)+1: u\rightarrow_T v\}$; the rank of a well-founded tree is defined to be the rank of its root.

We can see that the leaves of the tree have rank 0, as they have no children. Nodes whose children are all leaves have rank 1; in general, the rank of a branching node is the supremum of the ordinal successors of the ranks of the nodes it is the parent of.

\medskip

We can now easily prove by transfinite induction the following.

\begin{proposition}\label{tree_value}
The rank of a node $v$ of a tree $T$ is exactly equal to the game value for Observer of the position of the climbing-through-$T$ game in which Climber is standing on $v$.
\end{proposition}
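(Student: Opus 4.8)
The plan is to argue by transfinite induction on the rank of $v$, equivalently by $\rightarrow_T$-induction; this is legitimate because ${\rm rank}$ is defined exactly when $T$ is well-founded, and in that case, as observed in Example \ref{climb_tree}, Observer can force a win from every position of the game, so the game value for Observer is defined at every position as well. Throughout, recall that in the climbing-through-$T$ game Observer is the open player and Climber the closed player, and by ``the position in which Climber stands on $v$'' I mean the position at which it is Climber's turn to move; write $g(v)$ for its game value for Observer. The goal is to show $g(v)={\rm rank}(v)$ for all $v\in V$.

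For the base case, let $v$ be a leaf, so that ${\rm rank}(v)=\sup\emptyset=0$ by definition. In the game, Climber stands on a leaf with no legal move available, so by the rules of the climbing game Climber has lost and Observer has won; hence this position has game value $0$ by Definition \ref{def_game_value}, giving $g(v)=0={\rm rank}(v)$.

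For the inductive step, let $v$ be a branching node and assume $g(u)={\rm rank}(u)$ for every child $u$ of $v$. At the position where Climber stands on $v$ it is the closed player's turn, and Climber's legal moves are exactly those to the children $u$ of $v$; after such a move it becomes Observer's turn, whose unique available move is to acknowledge with ``OK'', returning to the position where Climber stands on $u$, of value $g(u)$. Since this latter position (after Climber has moved to $u$) is one at which the open player has a single legal move, to a position of value $g(u)$, its own game value is $g(u)+1$. Applying the supremum clause of Definition \ref{def_game_value} at the position on $v$ then yields
\[
g(v)=\sup\{\, g(u)+1 : u\rightarrow_T v\,\}=\sup\{\, {\rm rank}(u)+1 : u\rightarrow_T v\,\}={\rm rank}(v),
\]
using the induction hypothesis and the definition of rank, which completes the induction.

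The argument is essentially a routine unwinding of the two recursions, so the only real point to watch — and the only plausible pitfall — is the bookkeeping of turns: one must notice that Observer's forced ``OK'' reply inserted between consecutive Climber moves supplies exactly the successor step ``$+1$'' appearing in the definition of rank, so that the game-value recursion restricted to the positions where Climber is to move coincides on the nose with the rank recursion rather than being shifted by one. It is also worth recording explicitly at the start why $g$ is defined everywhere, namely that well-foundedness of $T$ forces Climber onto a leaf after finitely many moves no matter how play proceeds, so that Observer has a winning strategy from every position.
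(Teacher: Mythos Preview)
Your argument is correct and follows the same transfinite induction as the paper, with the added virtue of making explicit the $+1$ contributed by Observer's forced ``OK'' reply---a point the paper's own successor case leaves somewhat implicit. One notational slip: in the displayed formula the condition should read $v\rightarrow_T u$ rather than $u\rightarrow_T v$, since by the paper's convention $a\rightarrow_T b$ means $b$ is a child of $a$, and here $u$ is meant to range over the children of $v$.
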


\begin{proof}
The nodes of $T$ with rank 0 are leaves, which are precisely the nodes on which Climber stands when Observer has won.

\medskip

Say $v$ is a node of $T$ with rank $\beta+1$; then, the children of $v$ have rank bounded by $\beta$, and one of them, say $w$, has exactly rank $\beta$.
When standing on $v$, Climber can only move to positions with defined game value bounded by $\beta$, by inductive hypothesis; by climbing one step up to $w$, Climber reaches a position of game value exactly $\beta$, again by hypothesis.

Hence, Climber was at a position with value $\beta+1$, when standing on $v$.

\medskip

Say $v$ is a node with a limit ordinal $\gamma$ as rank.
From $v$, Climber can only reach nodes of arbitrarily high rank $\alpha<\gamma$, which correspond to positions of value $\alpha<\gamma$ for Observer, by inductive hypothesis.

This completes the inductive proof by definition of game value.

\medskip

Moreover, note that $v$ has undefined rank exactly when it is contained in an infinite branch of $T$; in that case, the corresponding position of climbing-through-$T$ has undefined game value for Observer since Climber has a winning strategy, namely climbing from $v$ up such infinite branch of $T$.
\end{proof}

\medskip

We present the concrete example of a tree of rank $\omega+3$ in figure \ref{fig_tree},\footnote{Even though the height of nodes in a tree increases towards the root, we represent trees with the root down and the leaves up, because that is what ``they do in nature", as remarked by \cite[p.~202]{kunen13}.} where the nodes are labelled by their rank.

\begin{figure}[h]
\centering
\includegraphics[width=5cm]{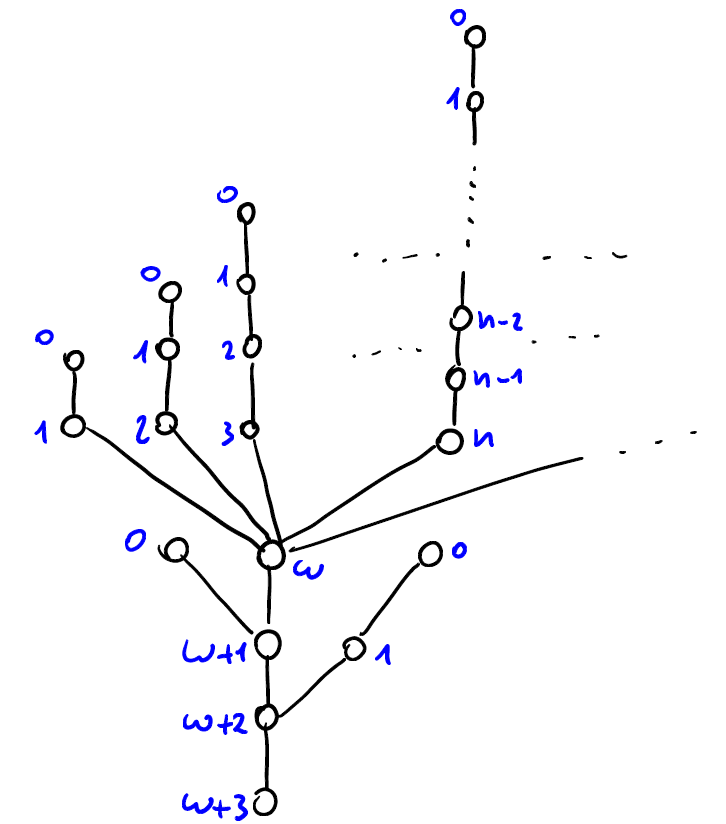}
\caption{A well-founded tree with nodes labelled by their ranks.}
\label{fig_tree}
\end{figure}

Note in particular the node with rank $\omega$; it is a branching node of countable degree whose children have arbitrarily high, but finite, rank $n\in\mathbb{N}$.

\medskip

We conclude this example justifying the presentation of $T$ as a ``partial game tree"; $T$ really is the tree obtained by the game tree of the climbing-through-$T$ game by joining each decision node of Observer to its preceding node, and deleting the edge between them.
\end{example}

\medskip

Setting the scene for the next section, we now show that the ordinals that can be realised as game values of positions in a game define the initial segment of some ordinal.

\begin{proposition}\label{init_segment_ord}
Let $p$ be a position in a game with game value $\alpha$ for some player; for any ordinal $\beta$ such that $\beta\le\alpha$, there is a finite sequence of legal moves that from $p$ reaches some position $q$ with game value $\beta$ for the same player.
\end{proposition}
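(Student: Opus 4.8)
The plan is to argue by transfinite induction on $\alpha$, establishing the statement ``for every position $p$ of game value $\alpha$ for a player $O$ and every ordinal $\beta\le\alpha$, some finite sequence of legal moves from $p$ reaches a position of game value $\beta$ for $O$''. The cases $\alpha=0$ and $\beta=\alpha$ are immediate, with $q=p$ and the empty sequence of moves, so I may assume $0<\beta<\alpha$. The target then reduces to the following: produce, in at most two legal moves from $p$, a position $p'$ whose game value $\beta'$ for $O$ satisfies $\beta\le\beta'<\alpha$, and then apply the induction hypothesis at $p'$ (with target $\beta\le\beta'$) to obtain the remaining finite play down to value $\beta$; concatenating the two finite sequences gives the desired play from $p$.

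To produce such a $p'$ I split on whose turn it is at $p$. If $O$ is next to move, then by Definition \ref{def_game_value} we have $\alpha=\mu+1$ and $O$ has a legal move to a position of game value $\mu$; since $\beta<\mu+1$ forces $\beta\le\mu$, this position works with $\beta'=\mu<\alpha$, after one move. If $C$ is next to move, then $\alpha$ is the supremum of the set $S$ of game values of the positions $C$ can reach in one move, and in particular every element of $S$ is at most $\alpha$. Since $\beta<\sup S$, either (i) some $\beta'\in S$ satisfies $\beta<\beta'<\alpha$, in which case one move of $C$ reaches a position of value $\beta'$ and we are done; or (ii) every element of $S$ exceeding $\beta$ equals $\alpha$. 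In case (ii), the supremum condition $\sup S=\alpha>\beta$ forces $\alpha\in S$ itself (otherwise all elements of $S$ would be at most $\beta$), so $C$ can move to a position $\hat p$ of value $\alpha$; by alternation of turns it is then $O$'s turn at $\hat p$, so by Definition \ref{def_game_value} the value $\alpha$ of $\hat p$ is a successor $\mu+1$, and $O$ has a legal move to a position of value $\mu<\alpha$, which works with $\beta'=\mu$ (and $\beta\le\mu$) after two moves.

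The genuinely delicate point — and the main obstacle — is case (ii): the closed player's available moves may jump from values at most $\beta$ directly to value $\alpha$, so no single move of $C$ lands strictly inside the interval $(\beta,\alpha)$. The resolution is exactly the alternation of turns: immediately after such a jump it is the open player's turn at a position of value $\alpha$, and by Definition \ref{def_game_value} a position of defined value with the open player to move always has successor value, whence the open player can step strictly down. Beyond this, only routine consistency checks remain: a position of value $>0$ with $O$ to move always offers $O$ a legal move (so $\hat p$ and its successor exist, since the value $\mu+1$ records the existence of a reachable position), every move invoked is an edge of the game tree and hence legal, and the concatenation of finitely many moves is finite; with these in place the induction closes.
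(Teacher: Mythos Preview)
Your argument is correct and essentially the same as the paper's: both proceed by transfinite induction on $\alpha$, reducing in one or two moves to a position of value $\beta'$ with $\beta\le\beta'<\alpha$, and both resolve the delicate case (the closed player to move at a value that turns out to be a successor) by letting $C$ move to a position still of value $\alpha$ where $O$ is now to move and can step strictly down. The only differences are cosmetic --- you organise the case split by whose turn it is rather than by whether $\alpha$ is a successor or a limit --- and your clause ``$0<\beta$'' is a harmless slip, since the argument never uses $\beta>0$ (you only need $\alpha>0$ and $\beta<\alpha$).
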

\begin{proof}
Let $\mathcal{G}$ be a game played by $O$ and $C$, and let it be open for $O$. The position $p$ has value $\alpha$ for the open player $O$.

\medskip

We prove the result by transfinite induction on $\alpha$; the base case $\alpha=0$ is trivial.

\medskip

Let $\alpha=\delta+1$ be a successor ordinal and take $\beta\le\delta<\alpha$.

If $O$ is the next to move in $p$ then, by definition of game value, he can make a move to some position with game value $\delta$; then there is a sequence of moves that leads to a position of value $\beta\le\delta$ by inductive hypothesis.

If $C$ is the next to move, then $\delta+1$ is the supremum of the game values of all positions reachable by $C$; in particular, she can reach some position also of value $\delta+1$ in which $O$ is next to move. The result follows from the previous case.

\medskip

Let $\alpha=\gamma$ be a limit ordinal and take $\beta<\gamma$.

Recall that a limit ordinal can be achieved as game value of some position only if the losing player is the next to move.
Hence, $C$ is the next to move in $p$.

Again by definition, $\gamma$ is the supremum of the game values of all positions in the reach of $C$; in particular, she can reach positions of value arbitrarily high, but strictly bounded by $\gamma$.
Hence, $C$ can make a move to reach a position $q$ with game value $\delta<\gamma$ such that $\beta\le\delta$.
We can now apply the inductive hypothesis to $q$, which concludes the proof.
\end{proof}

\section{The omega one}
We can informally think of high-value positions as evidence of the ``strategic complexity" of a specific open game.
We formalise this idea in the following definition that we make for a \emph{class of games}, that is a collection of games constructed according to the same rules, but with different initial positions.

\begin{definition}
The \emph{omega one} of a class of games $\mathbb{G}$, all open for some player, is the supremum of the game values of all the games in $\mathbb{G}$, for that player. We denote it by $\omega_1^{\mathbb{G}}$.
\end{definition}

Recall that we only consider games in which plays are sequences of at most countably many moves and, at each turn, players can choose one of up to countably many moves.
In fact, we will only consider games played by moving or placing pieces on boards of countably infinite size.

Observe that, in such a game $\mathcal{G}$, the game value of all positions will be the supremum of at most countably many ordinals, and if all such ordinals are countable, then $\omega_1$ cannot arise as game value of any position.

The result below follows by transfinite induction.

\begin{proposition}\label{bound_omega_one_game}
For any open game $\mathcal{G}$, we have that the game value of $\mathcal{G}$ is strictly smaller than $\omega_1$.
\end{proposition}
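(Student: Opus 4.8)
The plan is to prove the slightly stronger statement that \emph{every} position of $\mathcal{G}$ with a defined game value has value strictly below $\omega_1$, and to obtain it by transfinite induction on the game value $\gamma$; the proposition is then the special case of the initial position. The engine of the argument is the remark made just before the statement: the value of a position is always either the successor of a single smaller value (when the open player $O$ is to move) or the supremum of the values of the children of that position (when the closed player $C$ is to move), of which there are at most countably many (Remark \ref{rmk_countability}), so $\omega_1$ could only appear as such a supremum, and regularity of $\omega_1$ will rule this out.

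Concretely, the base case $\gamma = 0$ is immediate. For a successor $\gamma = \delta+1$, if a position $p$ has value $\gamma$ then Proposition \ref{init_segment_ord} supplies a position $q$ reachable from $p$ with value exactly $\delta$; the inductive hypothesis gives $\delta < \omega_1$, and since $\omega_1$ is a limit ordinal we get $\gamma = \delta+1 < \omega_1$. For a limit $\gamma$ I would first note, directly from Definition \ref{def_game_value}, that a limit value can only occur at a position where $C$ is to move (at an $O$-to-move position the value is $0$ or a successor), so a position $p$ of value $\gamma$ has $\gamma$ equal to the supremum of the values of its children, of which there are at most countably many, each with defined value. Each such child is a position where $O$ is to move, so its value is $0$ or a successor and hence cannot equal the limit $\gamma$; being $\le \gamma$ it is therefore $< \gamma$, so the inductive hypothesis applies and every child has value $< \omega_1$. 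A supremum of countably many ordinals below $\omega_1$ is itself below $\omega_1$ (because $\omega_1$ is regular, equivalently because a countable union of countable sets is countable), so $\gamma < \omega_1$, closing the induction.

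The only step requiring genuine care is the limit case, and within it the verification that the supremum defining $\gamma$ is harmless: this rests on the two facts that each position has at most countably many successors and that $\omega_1$ has uncountable cofinality. A small subtlety to be handled there is ruling out a child of $p$ with value equal to $\gamma$ itself, which follows from the parity observation that positions where $O$ moves never carry limit values. As an alternative route avoiding the induction, one can observe that the game tree, being embeddable in $\omega^{<\omega}$, has only countably many vertices, so only countably many ordinals are realised as game values; by Proposition \ref{init_segment_ord} this set of realised values is downward closed, hence an initial segment of the ordinals, and a countable initial segment of the ordinals is a countable ordinal, so every game value — in particular that of $\mathcal{G}$ — lies below $\omega_1$.
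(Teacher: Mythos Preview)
Your proposal is correct and follows essentially the route the paper indicates: the paper does not spell out a proof but simply records that the result ``follows by transfinite induction'' from the observation that each position has at most countably many successors, and your inductive argument is precisely a careful execution of this, with the regularity of $\omega_1$ made explicit in the limit step. Your alternative route via the countability of the game tree together with the downward-closure of realised values (Proposition \ref{init_segment_ord}) is a clean shortcut that the paper does not mention; it trades the transfinite induction for a single cardinality observation, at the cost of appealing to the embeddability into $\omega^{<\omega}$ rather than just to the local branching bound.
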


\begin{corollary}\label{bound_omega_one_class}
For a class of games $\mathbb{G}$, all open for some designated player, we have that $\omega_1^{\mathbb{G}} \leq \omega_1$.
\end{corollary}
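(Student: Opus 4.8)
The plan is to obtain this immediately from Proposition~\ref{bound_omega_one_game}. By definition, $\omega_1^{\mathbb{G}}$ is the supremum of the set $A$ consisting of the game values, for the designated player, of those games in $\mathbb{G}$ whose initial position has a defined game value (games with no defined value simply contribute nothing to the supremum). First I would appeal to Proposition~\ref{bound_omega_one_game} to conclude that every element of $A$ is an ordinal $\alpha$ with $\alpha < \omega_1$.

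It then remains to record the elementary ordinal-theoretic fact that the supremum of a set of ordinals each of which lies below $\omega_1$ is at most $\omega_1$: the ordinal $\omega_1$ is an upper bound for $A$, and $\sup A$ is by definition the least such upper bound, hence $\sup A \le \omega_1$. Therefore $\omega_1^{\mathbb{G}} \le \omega_1$, as claimed.

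There is essentially no obstacle in this argument; the one point worth flagging is that, in contrast with the strict inequality of Proposition~\ref{bound_omega_one_game} for a single game, the bound for a class cannot in general be sharpened to $<\omega_1$. If $\mathbb{G}$ contains games of unboundedly large countable game value, then $A$ is cofinal in $\omega_1$ and the supremum is exactly $\omega_1$. This is precisely the situation established for the class of open games of Infinite Draughts in Chapter~\ref{chapter_draughts}, so the corollary is sharp and the inequality genuinely cannot be replaced by a strict one.
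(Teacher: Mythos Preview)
Your argument is correct and matches the paper's approach exactly: the paper states this result as an immediate corollary of Proposition~\ref{bound_omega_one_game} without giving a separate proof, and your derivation via the elementary supremum observation is precisely what is intended. Your additional remark that the bound is sharp (witnessed later by Infinite Draughts) is accurate and anticipates the paper's own commentary following Proposition~\ref{omega_one_climb}.
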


\begin{remark}
Proposition \ref{bound_omega_one_game} and Corollary \ref{bound_omega_one_class} crucially rely on game positions having at most countably many successors, which is an assumption that we make throughout this work, as highlighted in Remark \ref{rmk_countability}.

These results do not hold for classes of more general games, such as the one mentioned later in footnote \ref{hypergame}.
\end{remark}

We will see now that this bound is sharp for the class of climbing-through-$T$ games, which are open for Observer.

\begin{lemma}\label{lemma_climb_tree}
For any ordinal $\alpha<\omega_1$, there is a well-founded tree $T$ of rank $\alpha$.
\end{lemma}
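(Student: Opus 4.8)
The plan is to prove this by transfinite induction on $\alpha<\omega_1$, constructing for each such $\alpha$ a well-founded tree $T_\alpha$ of rank exactly $\alpha$. The base case $\alpha=0$ is handled by the single-node tree (just the root, which is a leaf, of rank $0$). For the successor step $\alpha=\beta+1$, given a tree $T_\beta$ of rank $\beta$, I would form $T_{\beta+1}$ by adjoining a new root node whose unique child is the old root of $T_\beta$; then ${\rm rank}(T_{\beta+1})={\rm rank}(T_\beta)+1=\beta+1$ directly from the definition ${\rm rank}(u)={\rm sup}\{{\rm rank}(v)+1:u\to_T v\}$ applied to the new root with its single child of rank $\beta$.

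The interesting case is $\alpha$ a limit ordinal. Since $\alpha<\omega_1$ is a countable limit ordinal, fix an increasing sequence $(\alpha_n)_{n\in\omega}$ of ordinals with $\alpha_n<\alpha$ and $\sup_n\alpha_n=\alpha$; without loss of generality we may take each $\alpha_n\ge 1$ (or simply note $\alpha_n+1<\alpha$ as well, re-indexing if needed). By the inductive hypothesis, for each $n$ there is a well-founded tree $T_{\alpha_n}$ of rank $\alpha_n$. Now build $T_\alpha$ by taking a fresh root node $v_0$ and attaching, as the subtrees hanging off the $n$-th child of $v_0$, a copy of $T_{\alpha_n}$ for each $n\in\omega$. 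Then $v_0$ is a branching node of countable degree, its $n$-th child is the root of $T_{\alpha_n}$ with rank $\alpha_n$, and so ${\rm rank}(v_0)={\rm sup}\{\alpha_n+1:n\in\omega\}=\alpha$ (using that $\alpha$ is a limit, so ${\rm sup}_n(\alpha_n+1)={\rm sup}_n\alpha_n=\alpha$). Each branch of $T_\alpha$ is finite, since it either is the trivial one-vertex branch or, after the first step into some $T_{\alpha_n}$, follows a branch of a well-founded tree; hence $T_\alpha$ is well-founded.

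I should also check that the resulting $T_\alpha$ really is a \emph{tree} in the sense of the paper, i.e.~that it embeds into $\omega^{<\omega}$: this is routine, since at each stage we only ever create countably many children at any node and branches stay of length $\le\omega$ (in fact finite), so an embedding can be assembled by choosing disjoint labels for the children. The main obstacle, such as it is, is the limit case — specifically making sure the chosen cofinal sequence $(\alpha_n)$ yields supremum exactly $\alpha$ and not something smaller, and handling the bookkeeping of disjoint copies of the $T_{\alpha_n}$; everything else is a direct unwinding of the rank definition. One could also phrase the whole argument as a converse to Proposition~\ref{tree_value}, but the explicit inductive construction above is cleaner and self-contained.
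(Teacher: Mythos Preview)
Your proof is correct and follows essentially the same transfinite induction as the paper's proof: identical base and successor steps, and in the limit case the paper attaches \emph{all} $T_\beta$ for $\beta<\alpha$ (using that $\alpha$ is countable so the new root has countable degree) rather than a chosen cofinal $\omega$-sequence $(\alpha_n)$ as you do, but this is a cosmetic difference yielding the same rank computation $\sup\{\beta+1:\beta<\alpha\}=\alpha$.
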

\begin{proof}
We prove this by transfinite induction.

If $\alpha=0$, then $T$ is the trivial tree with only one node.

\medskip

If $\alpha=\beta+1$, then take a tree $T'$ of rank $\beta$ by inductive hypothesis and simply let $T$ be a tree obtained by adding an extra node $v$ to $T'$ and let $v$ have the root of $T'$ as only child; $v$ is the root of $T$, which thus has rank $\beta+1=\alpha$.

\medskip

If $\alpha$ is a limit ordinal, then let $T_\beta$ be a tree of rank $\beta$ for all $\beta<\alpha$.
Define $T=\bigoplus_{\beta<\alpha} T_\beta$ similarly to the previous construction; $T$ is obtained by letting an extra node $v$ have as children the roots of each $T_\beta$, for $\beta<\alpha$.
If $T$ has $v$ as root, then it is clearly as needed.

Note that this construction of $T$ satisfies our earlier definition of tree because $\alpha$ is a countable ordinal, and so the root node $v$ of $T$ has countable degree.
\end{proof}

\begin{remark}\label{rmk_value_tree}
A climbing-through-$T$ game has a defined game value for Observer if and only if $T$ is well-founded, otherwise the game would be a win for Climber, as we saw in the proof of Proposition \ref{tree_value}.
\end{remark}

\begin{proposition}\label{omega_one_climb}
The omega one of the class of open climbing-through-$T$ games is $\omega_1$.
\end{proposition}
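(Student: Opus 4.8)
The plan is to establish separately the two inequalities $\omega_1^{\mathbb{G}} \leq \omega_1$ and $\omega_1^{\mathbb{G}} \geq \omega_1$, where $\mathbb{G}$ denotes the class of open climbing-through-$T$ games. The upper bound is immediate: every climbing-through-$T$ game is open for Observer, as noted in Example \ref{climb_tree}, so Corollary \ref{bound_omega_one_class} applies directly and gives $\omega_1^{\mathbb{G}} \leq \omega_1$.

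For the lower bound I would fix an arbitrary countable ordinal $\alpha < \omega_1$ and invoke Lemma \ref{lemma_climb_tree} to obtain a well-founded tree $T_\alpha$ of rank $\alpha$. By Remark \ref{rmk_value_tree} the corresponding climbing-through-$T_\alpha$ game has a defined game value for Observer, since $T_\alpha$ is well-founded, and by Proposition \ref{tree_value} applied to the root $v_0$ of $T_\alpha$ — which is exactly the initial position of that game — this value equals ${\rm rank}(v_0) = \alpha$. Hence every countable ordinal is realised as the game value of some game in $\mathbb{G}$, so $\omega_1^{\mathbb{G}} = \sup\{\,\text{game value of }\mathcal{G} : \mathcal{G}\in\mathbb{G}\,\} \geq \sup_{\alpha<\omega_1}\alpha = \omega_1$.

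Combining the two inequalities gives $\omega_1^{\mathbb{G}} = \omega_1$. I do not expect a genuine obstacle here: the only subtlety worth stating explicitly is that $\omega_1$ is not itself attained as the value of any single game — Proposition \ref{bound_omega_one_game} rules this out — yet the supremum defining the omega one is still $\omega_1$ precisely because the attained values, namely all the countable ordinals, are cofinal in $\omega_1$. All the substantive content has already been carried out in Proposition \ref{tree_value} and Lemma \ref{lemma_climb_tree}, so the proof is just their conjunction together with Corollary \ref{bound_omega_one_class}.
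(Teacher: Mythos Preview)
Your proof is correct and follows essentially the same route as the paper's: both combine Lemma \ref{lemma_climb_tree} with Proposition \ref{tree_value} for the lower bound and invoke Corollary \ref{bound_omega_one_class} for the upper bound. The only cosmetic difference is that the paper additionally spells out why no single climbing-through-$T$ game can attain value $\omega_1$ (a tree of rank $\omega_1$ would need a root of uncountable degree), whereas you cover the same point by citing Proposition \ref{bound_omega_one_game}.
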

\begin{proof}
By Lemma \ref{lemma_climb_tree} and Proposition \ref{tree_value}, there are climbing-through-$T$ games of arbitrarily high value $\alpha<\omega_1$; therefore, the omega one of the class of climbing-through-$T$ games is at least $\omega_1$.

If there were some climbing-through-$T'$ game of value $\omega_1$, then there would be some well-founded tree $T'$ of rank $\omega_1$ by Proposition \ref{tree_value}, so that the root of $T'$ would have uncountable degree; this is a contradiction because we assume trees to be sub-trees of the full countable tree.

Invoking the contrapositive of Proposition \ref{init_segment_ord}, we confirm the bound given by Corollary \ref{bound_omega_one_class} and complete the proof.
\end{proof}

\begin{remark}
Observe that no climbing-through-$T$ game can have value $\omega_1$ since the ranks of well-founded trees, which correspond to the possible initial positions, are countable.

Therefore, a necessary condition to have uncountable omega one for the class of climbing-through-$T$ games, as for any other game class,\footnote{\label{hypergame}We could consider a climbing-through-$T$ \emph{hypergame} in which Climber makes the first move and chooses a well-founded tree $T$ on which to play; it follows from our discussion that such a hypergame would have uncountable game value.} is to have uncountably many possible initial positions.
\end{remark}

\medskip

In order to stress the importance of infinite branching nodes in achieving infinite game values, we propose the following game defined by \cite{conway96}, which is an interesting example of game open for only one player.

\begin{example}\label{angel_devil}
Angel and Devil game.

Let two players, Angel and Devil, play on an infinite chessboard, which is in bijection with $\mathbb{Z}\times\mathbb{Z}$, as usual.

On his turn, the Devil can mark, i.e.~\emph{burn}, any square of the chessboard; such square becomes unavailable to the Angel.

At the initial position, the Angel stands on the origin (0,0) and, at each turn, moves by at most $p$ chess king's moves away from the previous square; that is, the Angel can move from $(x,y)$ to $(x',y')$ if $\lvert x - x'\rvert, \lvert y - y'\rvert\le p$ and $(x',y')$ is not burnt by the Devil; we say that the Angel has \emph{power} $p$.

The Devil wins if the Angel remains trapped, that is, at some turn the Angel has no legal moves;
conversely the Angel wins by remaining free and by having a legal move to make at all stages of the play.

\medskip

Observe how Conway's game, which is played on an infinite board, does not allow for infinite game values for the Devil because of the finite choice of moves of the Angel.

The game is clearly open only for the Devil, while the Angel wins exactly when both players make infinitely many moves.
Thus, assume that some position has infinite game value, where the Devil is winning; by Proposition \ref{init_segment_ord}, it is enough to take it to be $\omega$.

By definition of game value, the Angel, who is the losing player, has to make the first move. But the Angel can only choose one of finitely many moves; namely, the Angel can only choose among the at most $(2p+1)^2$ cells within the square allowed by its power $p$ which have not yet been burnt by the Devil.

But then, $\omega$ would be the supremum of at most $(2p+1)^2$ finite values, which is a contradiction.

The only way to obviate to this issue would be giving `infinite power' to the Angel, allowing it to move to an arbitrary cell of the infinite board, but that would lead the Angel to trivially win the game against a Devil that only burns finitely many cells at each turn;
such an Angel of infinite power would win even against a Devil who burns coinfinitely many cells at each turn, that is, if the Devil always leaves infinitely many cells available for the Angel to move on.

\medskip

We conclude that, generalising the choice of initial position, the class of games induced by the Angel and Devil game has omega one at most $\omega$.
\end{example}

\begin{remark}
Conway originally asked for a proof of the intuitive fact the Angel and Devil game starting from an empty board is a win for the Angel.

It was proved by Berlekamp\footnote{According to \cite[\textsection 1]{conway96}.} that an Angel of power 1, that is a chess king, can be trapped on any sufficiently large finite board; the question remained open for a decade, until \cite{andras07} proved that the Angel of power 2 wins against the Devil.
\end{remark}

\begin{proposition}
Let $\mathbb{G}$ be a class of games open for player $O$, and say that his opponent $C$ has only finitely many moves to choose from at each turn.\footnote{Observe that a finite bound for the number of possible moves for $C$ across all turns is not required.} Then, $\omega_1^\mathbb{G}\le \omega$.

Moreover, if $O$ has a winning strategy for some game $\mathcal{G}$ in $\mathbb{G}$, then $O$ has a win-in-$n_\mathcal{G}$ strategy\footnote{See the mate-in-$n$ problem in Example \ref{mate_in_n}.} for some $n_\mathcal{G} \in\mathbb{N}$.
\end{proposition}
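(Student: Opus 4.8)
The plan is to exploit the same mechanism that appeared in the Angel and Devil analysis (Example \ref{angel_devil}), but now systematically: finiteness of the closed player's move set at each turn forces every game value to be finite, because taking a supremum over finitely many ordinals cannot produce a limit. First I would argue by contradiction for the bound $\omega_1^{\mathbb{G}}\le\omega$. Suppose some game $\mathcal{G}\in\mathbb{G}$ has a position $p$ whose game value for $O$ is infinite. By Proposition \ref{init_segment_ord}, invoking the initial-segment property of realised game values, there is then a position $q$ reachable from $p$ with game value exactly $\omega$ for $O$. Recall that a limit ordinal can arise as a game value only at a position where the closed player $C$ is to move (the successor clause of Definition \ref{def_game_value} produces only successor ordinals, so a limit value must come from the supremum clause). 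Hence at $q$ it is $C$'s turn, and $\omega=\sup\{\operatorname{val}(q'):q'\text{ reachable from }q\text{ by one legal move of }C\}$. By hypothesis $C$ has only finitely many legal moves at $q$, so this supremum is the maximum of finitely many ordinals, each of which is strictly below $\omega$ (hence finite) or is itself $\omega$; a finite maximum of finite ordinals is finite, and if one of them already equalled $\omega$ we could replace $q$ by that successor position and repeat — but no infinite descent of this kind can continue, as at each step $C$ has strictly fewer moves is not guaranteed, so instead I would phrase it cleanly: pick $q$ of value $\omega$ with, among all such positions reachable from $p$, a witnessing play of minimal length; then every one-move successor of $q$ has value $<\omega$, so $\omega$ is a finite maximum of finite ordinals, a contradiction. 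This establishes that no position of any $\mathcal{G}\in\mathbb{G}$ has an infinite game value, so $\omega_1^{\mathbb{G}}\le\omega$.

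For the second claim, suppose $O$ has a winning strategy for $\mathcal{G}$. Then the initial position $v_0$ of $\mathcal{G}$ has a defined game value for $O$ (the positions with defined value are exactly those from which $O$ can force a win, as noted after Definition \ref{def_game_value}), and by the first part this value is some finite $n_{\mathcal{G}}\in\mathbb{N}$. I would then verify that the value-reducing strategy for $O$ is a win-in-$n_{\mathcal{G}}$ strategy in the sense of Example \ref{mate_in_n}: following it, $O$ strictly decreases the game value on each of $O$'s turns, and $C$ cannot raise it, so the value reaches $0$ — a won position for $O$ — within at most $n_{\mathcal{G}}$ moves by $O$. Moreover, by Proposition \ref{init_segment_ord} applied with $\beta=n_{\mathcal{G}}$ down to $\beta=0$, or more directly by induction on the value, $C$ has a line of play forcing at least $n_{\mathcal{G}}$ moves by $O$ before a won position is reached, so $n_{\mathcal{G}}$ is genuinely the minimal such bound. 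This is exactly a win-in-$n_{\mathcal{G}}$ strategy.

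The main obstacle is the first part, and specifically making the contradiction airtight: one must be careful that "finitely many moves for $C$" is only assumed turn-by-turn, with no uniform bound across the game, so the argument cannot simply bound all values by a single integer a priori — it must work locally at the position $q$ of value $\omega$. The clean way around this is the minimal-length-witness choice of $q$ described above (or, equivalently, a direct transfinite induction showing every position with defined value has finite value, using that a successor clause adds $1$ to a finite ordinal and a supremum clause takes a finite max of finite ordinals). Everything else — extracting $n_{\mathcal{G}}$, checking the value-reducing strategy wins in $n_{\mathcal{G}}$, checking minimality via $C$'s delaying line — is routine given the tools already developed in Section \ref{sec_game_values}.
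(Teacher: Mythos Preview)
Your overall approach coincides with the paper's: reduce via Proposition~\ref{init_segment_ord} to a position $q$ of value exactly $\omega$, note that $C$ must move there, and derive a contradiction from the finiteness of $C$'s move set at $q$. The second part is also essentially the paper's argument.

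There is, however, a genuine gap in your execution of the first part. Your ``minimal-length witness'' device does not do what you claim: choosing $q$ to have \emph{minimal} depth among value-$\omega$ positions reachable from $p$ does not rule out a one-move successor $q'$ of $q$ having value $\omega$, because $q'$ has \emph{greater} depth than $q$, not smaller, so its existence contradicts nothing. The clean fix is already in your hands but you fail to apply it: you correctly observe that a limit value can only occur at a $C$-to-move position; simply apply this one more time. Since $C$ moves at $q$, $O$ moves at every successor $q'$, and the $O$-clause of Definition~\ref{def_game_value} always outputs a successor ordinal (or $0$), so each successor value is automatically strictly below $\omega$. Then $\omega$ is the supremum of finitely many finite ordinals, a contradiction. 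This is precisely the mechanism the paper invokes from the Angel--Devil example, and it makes the minimality manoeuvre and the descent worry disappear entirely. Your transfinite-induction alternative is correct but unnecessary once this observation is made.
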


\begin{proof}
The discussion in Example \ref{angel_devil} shows that no infinite ordinal can be realised as the value of a position of the Angel and Devil game, and so the omega one of its induced class of games is at most $\omega$.

That argument relies only on Proposition \ref{init_segment_ord} and on the fact that the Angel has finitely many moves to choose from when at a position of value $\omega$, so we can directly generalise it and obtain that $\omega_1^\mathbb{G}\le \omega$.

\medskip

Suppose now that $O$ plays according to some winning strategy $\sigma$ in a game $\mathcal{G}$ of $\mathbb{G}$. Let $p$ be the first position in which $C$ is the next to move.\footnote{$p$ is either the initial position, or the one reached by the first move of $O$.}

Note that $C$ can reach only finitely many positions from $p$, and all such positions have finite game value for $O$; let $m$ be the supremum of all such values.

It is clear that $m$ is finite and that $\sigma$ is a win-in-$n$ strategy for $n=m$ or $(m+1)$, depending on who moved first, so that the longest play of $\mathcal{G}$ ends after $n$ moves by $O$.
\end{proof}

We conclude by focussing our attention more narrowly than as stated at the beginning of this chapter; the games which are interesting for our analysis are open for some player and allow that player's opponent to have a strategy ensuring that she can choose among infinitely many possible moves at least once in any play.

\bigskip

We will prove in Chapter \ref{chapter_hex} that in Infinite Hex the freedom of infinite choice granted to the losing player is not enough to construct a proper obstruction to the winning player's main line of play.
The class of open games of Infinite Hex has the smallest possible omega one, that is $\omega_1^{\rm Hex}=\omega$; we can interpret this as saying that the complexity of the infinite generalisation of Hex is lost when we restrict our analysis to games open for some player.

In Chapter \ref{chapter_draughts} we will show a dual result; the class of open games of Infinite Draughts features positions of arbitrarily high game value, so that the losing player's obstructing power is put to good use, and hence $\omega_1^{\rm Draughts}=\omega_1$.
This means that we can directly appreciate the strategic complexity of the infinite version of Draughts in games open for some player.

\newpage
\chapter{Hex}\label{chapter_hex}
\section{Prelude: finite Hex}

The game of Hex was first designed by the inventor and polymath Piet Hein, who published it as ``Polygon" and made it become a popular board game in Denmark, starting in 1942.\footnote{\cite{hein42}}

Hex was then rediscovered by Nobel laureate John Nash when still a student at Princeton in 1948; he described it as a ``matter of connecting topology and game theory".\footnote{According to a private communication by \cite[\textsection 3]{hayward06}.}

From the common room of the Department of Mathematics at Princeton, the then-called game of ``Nash" became popular in the mathematical community, and eventually published as the mainstream board game ``Hex" in 1952.

\bigskip

Hex is played on a rhomboidal board of $n\times n$ hexagonal tiles by two players, Red and Blue, who alternate placing stones on the empty tiles of the board;
to each player is assigned a pair of opposite board sides and the player who joins his assigned sides with a connected chain of stones of his respective colour is the winner.

See in figure \ref{hex_empty} the initial position and in \ref{hex_full} the ending position of a play won by Red.

\begin{figure}[h]
\centering
\begin{subfigure}{4cm}
  \centering
  \includegraphics[width=.8\linewidth]{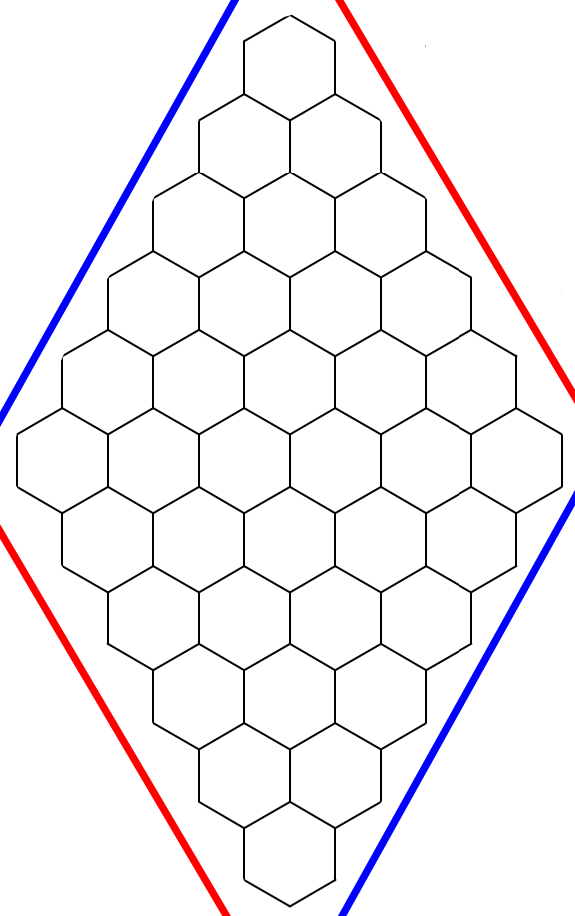}
  \caption{Initial position~~~~~~}
  \label{hex_empty}
\end{subfigure}
\begin{subfigure}{4cm}
  \centering
  \includegraphics[width=.8\linewidth]{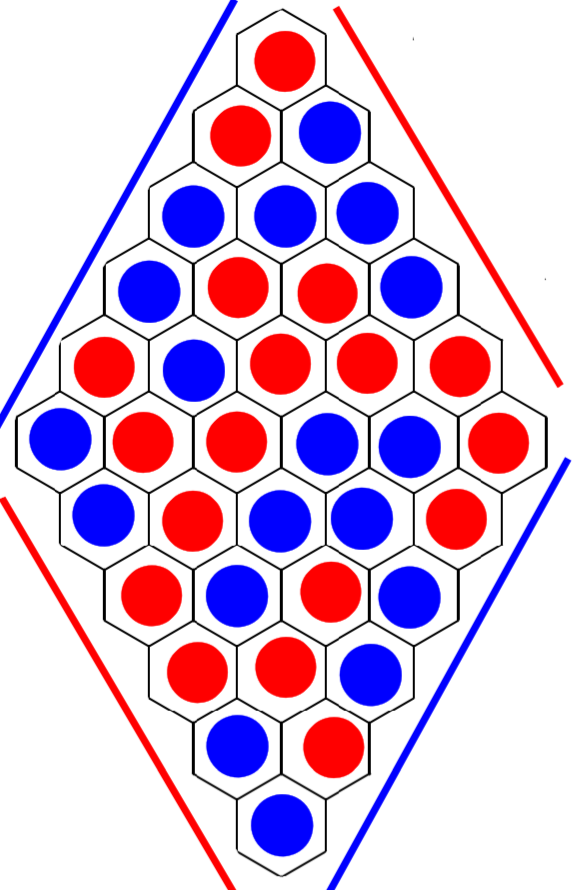}
  \caption{Ending position}
  \label{hex_full}
\end{subfigure}
\caption{The $6\times 6$ Hex board.}
\label{hex_board}
\end{figure}

\subsection{Fundamental results}\label{results_finite_hex}

\begin{remark}\label{rmk_boundary}
Observe that the following Theorems \ref{hex_theorem} and \ref{jordan_hex} rely on the topological properties of the boundary of the finite Hex board; namely, that the boundary is made of 4 contiguous segments alternatively assigned to the two players.
\end{remark}

\begin{theorem}[Hex Theorem]\label{hex_theorem}
At a finite Hex position in which all the tiles are marked by either Red or Blue, at least one of the players has won.
\end{theorem}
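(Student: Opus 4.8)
The plan is to prove the Hex Theorem by the classical \emph{boundary-following} argument, which uses exactly the topological feature of the finite board recorded in Remark \ref{rmk_boundary}: that the boundary of the board decomposes into four consecutive arcs coloured, in cyclic order, Red, Blue, Red, Blue. Since every tile is already marked, it suffices to produce, from any such colouring, a monochromatic chain joining one player's pair of opposite sides.

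First I would surround the board with a layer of \emph{virtual} tiles --- a strip of Red tiles along each Red side and a strip of Blue tiles along each Blue side --- and fix the colour of the four virtual corner regions by any convention consistent with the adjacent sides; equivalently, partition the exterior of the board into four wedge regions inheriting the colours Red, Blue, Red, Blue of the four sides. The point of this step is that afterwards every vertex of the hexagonal tiling lies on the boundary of exactly three regions: three hexagons in the interior, two hexagons and one wedge along an edge, and one hexagon and two wedges at a corner.

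Next comes the combinatorial core. Call an edge of the tiling \emph{bichromatic} if the two regions it separates carry different colours, and let $G$ be the graph on the tiling vertices formed by the bichromatic edges lying on or inside the board. At a vertex meeting three regions the three regions carry at most two colours, so the number of incident bichromatic edges is $0$ when all three agree and $2$ when exactly one is the odd one out; hence every vertex has even degree in $G$ except the four corner vertices, where the two wedges carry opposite colours, so that one of the two bichromatic edges at that vertex runs out into the exterior and the other lies on the board, leaving $G$-degree exactly $1$. Therefore $G$ is a disjoint union of simple cycles together with exactly two simple paths, and these two paths pair up the four corners. Fix one such path $P$. Planarity forbids the two paths from crossing, so $P$ joins either the two endpoints of one of the Red sides or the two endpoints of one of the Blue sides. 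Walking along $P$ one sees a single fixed colour on the left throughout and the other on the right throughout (the two sides cannot swap, since that would require a vertex of $P$ on the boundary of four regions). In the first case the hexagons hugging $P$ on its Blue side are pairwise edge-adjacent and so form a single connected Blue chain, which, because $P$ terminates at the two ends of a Red side, runs from one Blue side of the board to the other; so Blue has won. In the second case the symmetric argument with the Red hexagons along $P$ yields a winning Red chain. Either way at least one player has won.

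The only place the argument needs genuine care --- the main obstacle --- is the boundary bookkeeping: choosing the virtual layer and corner conventions so that the degree count at the four corners really is $1$, and then verifying that each of the two admissible non-crossing corner-pairings realised by $P$ forces the monochromatic strip along $P$ to connect the correct pair of like-coloured sides (including checking that consecutive tiles along $P$ are adjacent in the sense used to define chains in Hex). The complementary fact that the two players cannot \emph{both} hold a connecting chain is the content of Theorem \ref{jordan_hex} and is not needed here. I would also remark, without pursuing it, that the statement can instead be deduced from Sperner's Lemma --- the Hex Theorem being equivalent to the two-dimensional Brouwer fixed-point theorem --- but the boundary-following proof is the most self-contained and is the one naturally suggested by the emphasis of Remark \ref{rmk_boundary}.
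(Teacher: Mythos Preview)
Your proposal is correct and follows essentially the same approach as the paper's proof, which is Gale's boundary-following argument along the graph of bichromatic edges, invoking the same degree-at-most-$2$ observation (the paper's Lemma \ref{lemma_graph}). The only cosmetic difference is that the paper attaches four extra leaf vertices $n,e,s,w$ at the corners and traces a single path from $s$ to one of the other three, whereas you keep the four corners as the odd-degree vertices and obtain two non-crossing paths; the extraction of a monochromatic winning chain hugging one side of the path is the same in both versions.
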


The Hex Theorem is the result that carries the highest mathematical significance of finite Hex; \cite{Gale79} showed it to be equivalent to the Brouwer Fixed Point Theorem, which in turn was shown to be equivalent to the Jordan Curve Theorem by \cite{maehara84} and \cite{adler16}.

In order to present the argument by \cite{Gale79}, we need to mention the following simple graph-theoretic result.

\begin{lemma}\label{lemma_graph}
A finite graph whose vertices have degree at most 2 is the union of disjoint subgraphs, each of which is either (i) an isolated vertex, (ii) a simple cycle, or (iii) a simple path.
\end{lemma}

\begin{proof}[Proof of Theorem \ref{hex_theorem}]
Let $\Gamma$ be the graph induced by the edges on the Hex board with the addition of 4 edges ending in the vertices $n, e, s, w$, as in figure \ref{gale_empty}.

Now, given a completely marked board, we present an algorithm to find a winning chain of stones.

Starting from $s$, we construct a tour along $\Gamma$ following the rule of always proceeding along an edge which is the common boundary of a cell with a Red stone and a cell with a Blue stone; observe that the edge starting from $s$ does have this property, by considering the quadrants outside of the board as assigned to the players, see figure \ref{gale_full} for an application of this touring rule starting from all the vertices of $\Gamma$.

\begin{figure}[h]
\centering
\begin{subfigure}{5cm}
  \centering
  \includegraphics[width=.8\linewidth]{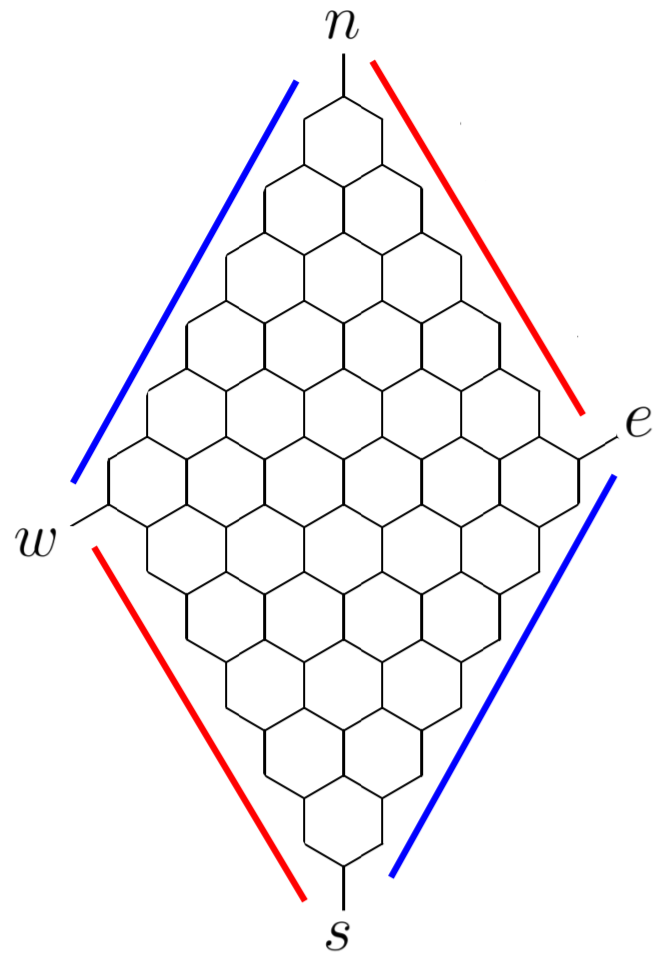}
  \caption{The edge graph $\Gamma$}
  \label{gale_empty}
\end{subfigure}
\begin{subfigure}{5cm}
  \centering
  \includegraphics[width=.8\linewidth]{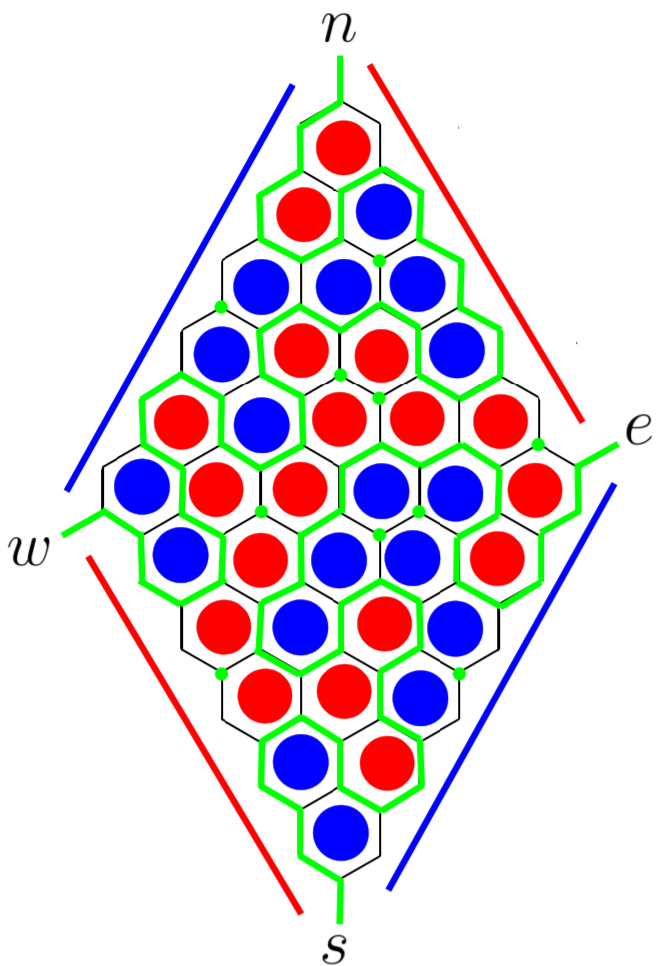}
  \caption{The tours on $\Gamma$}
  \label{gale_full}
\end{subfigure}
\caption{The argument for Theorem \ref{hex_theorem}.}
\label{gale_hex_board}
\end{figure}

Observe that this touring rule determines a unique path; suppose that the tour has proceeded along some edge $d$ and arrives at some vertex $v$.
Two of the three tiles incident to $v$ are those of which $d$ is the common boundary, hence one has a Red stone and the other a Blue stone; the third tile incident to $v$ may have a Red or a Blue stone, but in either case there is exactly one edge $d'$ which satisfies the rule described.

As a consequence of starting from $s$, we have that such a tour will never revisit any vertex of $\Gamma$; however, $\Gamma$ is finite and so the tour must terminate.

\medskip

It is clear from the above that such a tour must describe a subgraph of $\Gamma$ whose vertices have degree at most 2; hence, by Lemma \ref{lemma_graph}, the tour starting at $s$ can only be a simple path, thus ending at either $w, n,$ or $e$.
Observe that each of these vertices is incident to either the NW Blue quadrant or the NE Red quadrant.

If, say, the terminal vertex of the described tour is incident to the NE quadrant, then Red has a winning chain which is incident to the edge tour starting in $s$ and joins the NE side with the SW side of the board; the other case for Blue is analogous.
\end{proof}

\begin{remark}
Observe that this proof does not require any ``right-left" orientation of the tour.

Moreover, we do not even require that the fully marked board is the ending position of some legal play.
\end{remark}

\begin{theorem}\label{jordan_hex}
There is no finite Hex position in which both players have won.
\end{theorem}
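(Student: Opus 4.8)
The plan is to argue by contradiction, exploiting a planarity/Jordan-curve obstruction: two winning chains of opposite colours, joining disjoint pairs of opposite sides of the rhomboidal board, would have to cross, but stones of different colours occupy distinct tiles, so the chains are vertex-disjoint, and a vertex-disjoint crossing of two such connectors in a planar board is impossible. Concretely, suppose for contradiction that at some finite Hex position both Red and Blue have completed winning chains. Let $C_R$ be a connected chain of Red-stoned tiles joining Red's two opposite sides and $C_B$ a connected chain of Blue-stoned tiles joining Blue's two opposite sides. Since no tile carries both a Red and a Blue stone, $C_R$ and $C_B$ share no tile.

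The key step is to extract a genuine topological contradiction from this combinatorial picture. I would realise the board as a planar region $B$ (a rhombus tiled by hexagons), and to each of $C_R$, $C_B$ associate a polygonal arc in $B$ obtained by connecting centres of adjacent hexagons along the chain; thus I get an arc $\gamma_R$ from one Red side to the opposite Red side and an arc $\gamma_B$ from one Blue side to the opposite Blue side, with $\gamma_R$ and $\gamma_B$ disjoint (as they pass through centres of disjoint tile-sets, and one can route the polygonal segments within the union of the respective tiles so that segments from different chains never meet). Now close up $\gamma_R$: as in Remark~\ref{rmk_boundary}, the boundary $\partial B$ splits into four contiguous segments, the two Red ones opposite and the two Blue ones opposite. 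Join the two endpoints of $\gamma_R$ by an arc running outside $B$ (or through an auxiliary strip glued along the two Red sides), obtaining a simple closed curve $J$. By the Jordan Curve Theorem, $J$ separates the plane into an inside and an outside; one of the two Blue boundary-segments lies inside $J$ and the other outside, because going around $\partial B$ one meets the endpoints of $\gamma_R$ separating the two Blue segments. But then $\gamma_B$, which connects a point of one Blue segment to a point of the other, must cross $J$; since $\gamma_B$ is disjoint from $\gamma_R$, it must cross the exterior closing arc — which I arranged to lie strictly outside $B$, while $\gamma_B \subseteq B$. This contradiction completes the proof.

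The main obstacle I anticipate is the careful bookkeeping needed to make ``the chains can be realised as disjoint arcs'' precise and to verify the separation claim about the two Blue boundary segments — essentially a discrete Jordan-curve argument on the hexagonal grid. An alternative, more self-contained route avoids invoking the Jordan Curve Theorem directly and instead reuses the edge-tour machinery from the proof of Theorem~\ref{hex_theorem}: a Red winning chain produces, along its boundary, an edge-path of $\Gamma$ from a vertex incident to one Red quadrant to a vertex incident to the other, and likewise for Blue; one then checks that the ``always keep a Red tile on one side and a Blue tile on the other'' tours starting from $s$ and from $n$ (say) would be forced to use a common edge if both colours connected, contradicting the fact that each such tour is a simple path with a unique successor edge at every vertex. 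I would likely present whichever of these two arguments is shorter to make rigorous; the Jordan-curve version is conceptually cleaner, while the edge-tour version keeps the chapter self-contained and leans only on Lemma~\ref{lemma_graph}, which has already been invoked.
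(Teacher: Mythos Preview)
Your proposal is correct, but it takes a different route from the paper. You argue via the Jordan Curve Theorem directly---close up the Red chain with an exterior arc to obtain a simple closed curve separating the two Blue sides, then derive a crossing contradiction---which is indeed the ``usual'' proof the paper alludes to but deliberately bypasses. The paper instead presents Schachner's $K_5$ argument: add one distinguished vertex for each of the four board sides and one further auxiliary vertex; if both Red and Blue have winning chains, these chains together with the boundary segments furnish a planar drawing of $K_5$, contradicting Kuratowski's theorem. Your approach is more hands-on and requires the bookkeeping you flag (disjoint polygonal realisations, the separation claim for the Blue sides), whereas the paper's approach trades that work for a single appeal to the non-planarity of $K_5$ and is essentially a one-picture proof. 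Your alternative edge-tour idea is also sound in spirit and would keep the chapter self-contained, but it is not the route the paper takes either.
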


This Theorem is usually proved via an application of the Jordan Curve Theorem; we propose another visual proof by \cite{schachner19}.

\begin{proof}[Proof of Theorem \ref{jordan_hex}]
Similarly to the previous proof, we add one distinguished vertex for each side of the board and an extra one.

Supposing for a contradiction that both Red and Blue win, we can use the two winning paths in the ending board position to construct the full graph with 5 vertices, as in figure \ref{K_5}.

However, such a graph cannot be planar; contradiction.

\begin{figure}[h]
\centering
\includegraphics[width=4cm]{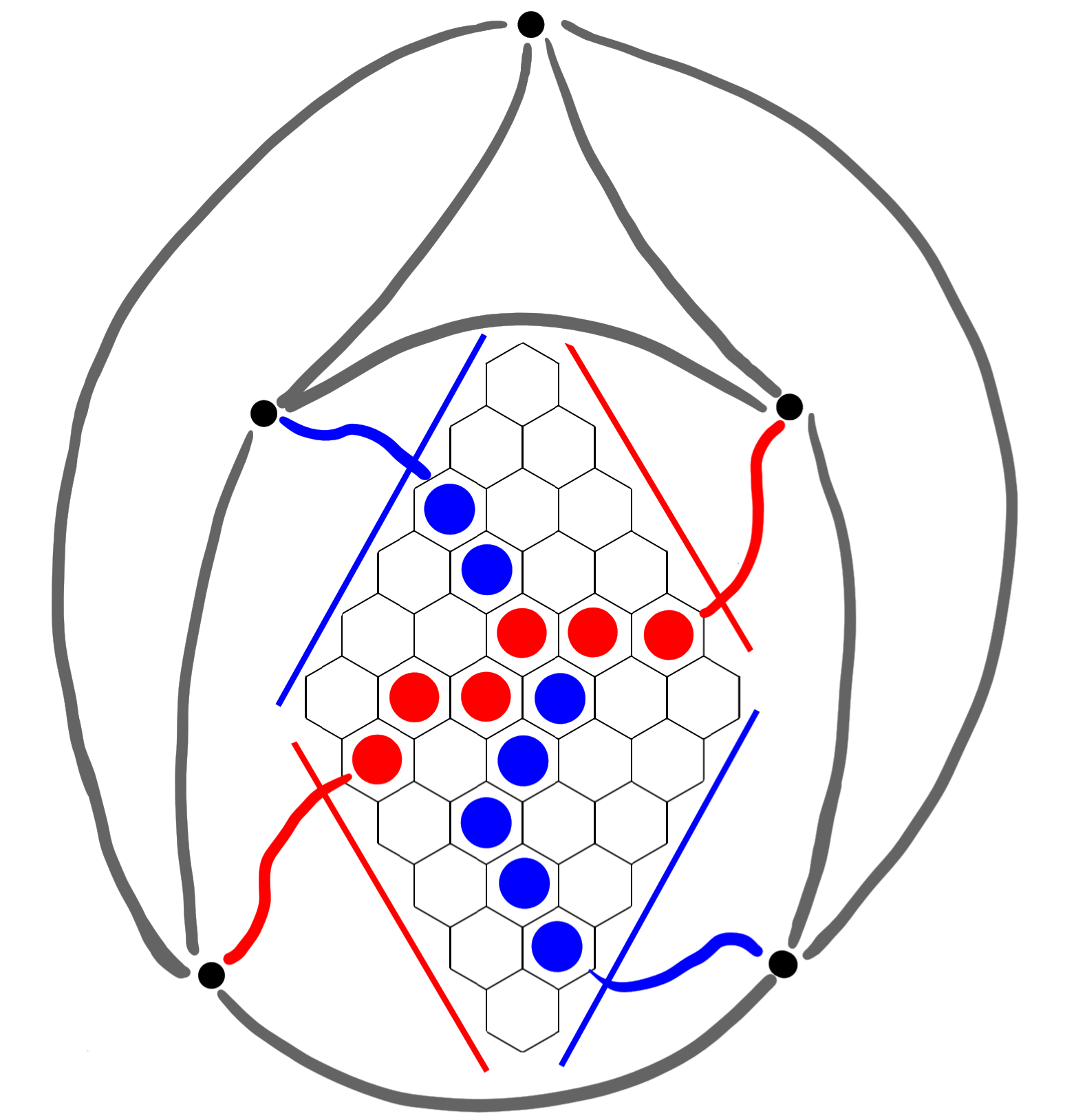}
\caption{Obtaining the graph $K_5$ from a Hex position.}
\label{K_5}
\end{figure}
\end{proof}

\begin{historical_remark}
According to \cite[pp.~7--8]{hayward19}, Piet Hein was aware of this alternative proof; he realised that the (then unproved) Four Colour Theorem implied that there is no planar full graph with 5 vertices. He mentioned this as the foundation for his intuition behind the design of Hex.
\end{historical_remark}

\medskip

From Theorems \ref{hex_theorem} and \ref{jordan_hex} we deduce the following;

\begin{corollary}[No-tie property]\label{no_tie}
All plays of finite Hex end with exactly one winner.\footnote{We can unambiguously say that plays end because they can last only for as many turns as there are tiles on the finite Hex board.}
\end{corollary}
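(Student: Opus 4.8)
The plan is to derive the corollary directly from Theorems \ref{hex_theorem} and \ref{jordan_hex}, after first checking the (essentially bookkeeping) fact that every play of finite Hex terminates. I would start with the latter: a move of a play places a stone on a previously empty tile, so the number of marked tiles strictly increases with each turn; as the $n\times n$ board carries only $n^2$ tiles, no play can exceed $n^2$ turns, and so every play reaches a terminal position after finitely many moves. This is exactly the observation flagged in the footnote to the corollary's statement.

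Next I would inspect this terminal position. A play ends in one of two ways: either some player has already connected his two assigned sides by a chain of his colour, in which case that player has won by the definition of the winning condition; or no legal move is available, which forces every tile of the board to be marked, and then Theorem \ref{hex_theorem} guarantees that at least one player has in fact won. In either case the terminal position is one in which at least one player has won, so every play has at least one winner. It then remains to rule out two winners, and this is immediate: by Theorem \ref{jordan_hex} no finite Hex position admits both players as winners, in particular not the terminal position of our play. Combining the two bounds, every play ends with exactly one winner, which is the claim.

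I do not expect a genuine obstacle here, since the substantive work lives in Theorems \ref{hex_theorem} and \ref{jordan_hex}; the only point deserving care is the case split on how a play terminates, where one should be clear that it is the Hex Theorem --- and hence the topology of the Hex boundary recalled in Remark \ref{rmk_boundary} --- that excludes a ``full board with no winner'' draw, so that the no-tie property genuinely rests on the two theorems rather than on the combinatorics of alternating play alone.
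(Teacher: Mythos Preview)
Your proposal is correct and matches the paper's approach: the paper simply states that the corollary follows from Theorems \ref{hex_theorem} and \ref{jordan_hex}, and you have spelled out exactly that deduction (with the termination bookkeeping from the footnote made explicit). There is nothing to add.
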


\begin{proposition}[Strategy-stealing]\label{strategy_stealing}
In finite Hex, the second player does not have a winning strategy.
\end{proposition}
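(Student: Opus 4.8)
The plan is to run the classical \emph{strategy-stealing} argument: assume towards a contradiction that the second player has a winning strategy $\tau$, and then show that the first player can exploit $\tau$ to win as well, contradicting Theorem~\ref{jordan_hex}.

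First I would isolate the only structural fact about Hex that the argument needs, namely \emph{monotonicity}: if a set of same-coloured cells contains a winning chain for that player, then so does any larger set of cells of the same colour. This is immediate, since a winning chain is a connected path of equally-coloured stones joining that player's two assigned sides, and adding further stones of the same colour neither disconnects the path nor destroys an endpoint on a side; in particular, having played ``extra'' stones of one's own colour can never turn a win into a non-win.

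Next I would spell out the stolen strategy for the first player. He opens with an arbitrary move, marking some cell $c$, and thereafter maintains in his mind a \emph{phantom play} of Hex in which he imagines himself to be the \emph{second} player following $\tau$, the phantom first player's moves being exactly the real second player's moves. Whenever the real second player moves, the first player feeds that move to $\tau$ and plays $\tau$'s prescribed response in the real game; if $\tau$ ever prescribes $c$, or any already-occupied cell, he instead plays an arbitrary empty cell and lets $c$ denote that cell from then on. Such an empty cell always exists when it is his turn, for otherwise the board would be full and, by Theorem~\ref{hex_theorem}, the play would already be decided. The invariant kept throughout is that the real first player occupies exactly the cells that $\tau$ would have the phantom second player occupy, \emph{together with} the single extra cell $c$, while the real second player occupies exactly the phantom first player's cells.

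Finally I would close the argument. As $\tau$ is winning for the second player, in the phantom play the phantom second player eventually forms a winning chain; by the invariant and monotonicity that same chain, hence a win, is present for the real first player, and the real play is finite, so by Corollary~\ref{no_tie} it ends with the first player as the winner. Thus the stolen strategy beats every strategy of the second player, in particular $\tau$ itself; but against $\tau$ the second player wins too, so a single finite play of Hex would be won by both players, contradicting Theorem~\ref{jordan_hex}. The one point requiring care is the bookkeeping of the phantom play — verifying that the ``extra stone'' invariant really is preserved in every case of $\tau$'s prescription, and that the first player is never stuck for lack of a legal move — rather than any substantive combinatorics.
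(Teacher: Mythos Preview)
Your argument has a genuine gap: it silently treats Hex as if both players had the \emph{same} winning condition, as in Tic-tac-toe. But in Hex, Red must connect one pair of opposite sides and Blue the other. The strategy $\tau$ is a winning strategy for the second player \emph{as Blue}: following it produces a chain joining \emph{Blue's} sides. When Red pretends to be the phantom second player and plays $\tau$'s responses with Red stones, he ends up with a Red chain joining Blue's two sides---which is not a Red win at all. Your monotonicity observation is correct but beside the point here; the problem is not that Red has an extra stone, it is that Red is building the wrong chain.

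The paper's proof supplies exactly the missing ingredient via Remark~\ref{rmk_finite_hex}.\ref{symmetry_board}: there is a rigid motion of the board taking it to its \emph{opposite}, i.e.\ with the side assignments swapped. One first transports the second player's strategy $\sigma$ through this symmetry to obtain a strategy $\sigma'$; viewed from Red's side of the board, $\sigma'$ now aims at \emph{Red's} pair of sides, and Red can steal it after a throwaway first move. The paper explicitly stresses afterwards that without this symmetry the argument would collapse, since ``the board, and so the relevant winning conditions, would have looked different'' to the two players. Your phantom-play bookkeeping is fine; what is missing is this use of the board's self-duality to convert a Blue-winning strategy into one Red can actually use.
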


\begin{remark}\label{rmk_finite_hex}
We observe the following basic properties of finite Hex.
\renewcommand{\theenumi}{\roman{enumi}}
\begin{enumerate}
    \item\label{extra_stone} It is never disadvantageous for any player to have an extra stone on the board,\footnote{Note how in a \emph{dynamic game} in which pieces can change position like Chess, for instance, having an extra piece that obstructs the escape of one's king can facilitate a checkmate. Or observe that it is possible to construct dynamic game positions in which making more than one move in a turn can put a player at a disadvantage.} as first observed by the discoverer in \cite{nash52}.
    
    \item\label{symmetry_board} There exists a rigid motion of the board $B$ into its \emph{opposite} -- that is, $B$ can be rotated, translated, and possibly reflected, into the board consisting of $B$ with each boundary segment assigned to the opposite player.
\end{enumerate}
\end{remark}

\begin{proof}[Proof of Proposition \ref{strategy_stealing}]
Suppose for a contradiction that Blue, the second player, has a winning strategy $\sigma$.

Let $\sigma'$ be the \emph{opposite} strategy of $\sigma$, that is the corresponding strategy on the opposite board, in the sense of Remark \ref{rmk_finite_hex}.\ref{symmetry_board}.

Now, let Red, the first player, place a stone on an arbitrary tile $t$ and then play according to $\sigma'$ ignoring the extra tile $t$, so to pretend to play as second. If, at some turn, $\sigma'$ prescribes Red to play on $t$, then Red can place a stone on another arbitrary tile $t'$ and continue to play ignoring it.

Note that the strategy for Red is winning, exactly because $\sigma$ is a second-player winning strategy and because ignoring his initial move can not obstruct his win, as observed in Remark \ref{rmk_finite_hex}.\ref{extra_stone}.

But now both players can have a winning strategy; this is a contradiction by Corollary \ref{no_tie}.
\end{proof}

Observe how the symmetry property of Remark \ref{rmk_finite_hex}.\ref{symmetry_board} is crucial for the argument above; it would not be possible to let Red use Blue's strategy otherwise, as the board, and so the relevant winning conditions, would have ``looked different" to each of them.

\bigskip


\bigskip

\label{diss_asym_board}
A natural example of boards that satisfy condition \ref{rmk_finite_hex}.\ref{extra_stone}.~but fail to satisfy condition \ref{rmk_finite_hex}.\ref{symmetry_board}.~is that of asymmetric $m\times n$ Hex boards.

Aiming to balance the game of Hex in favour of the otherwise losing second player, it seems reasonable to use asymmetric boards; however, it turns out that the player playing along the short edge of the board has such a strong advantage to be able to win even when playing second.

In fact, it is possible to pair the tiles of any $(n+1)\times n$ Hex board so that the player joining the long edges wins by choosing, at each turn, the cell paired to the last cell taken by the opponent, as observed by \cite[\textsection 8]{gardner88}. When making the initial move, the player following this strategy can just make an arbitrary move as in the proof of Proposition \ref{strategy_stealing}.

This is clearer in the example of figure \ref{asymmetric_board}.

\begin{figure}[h]
\centering
\includegraphics[width=4cm]{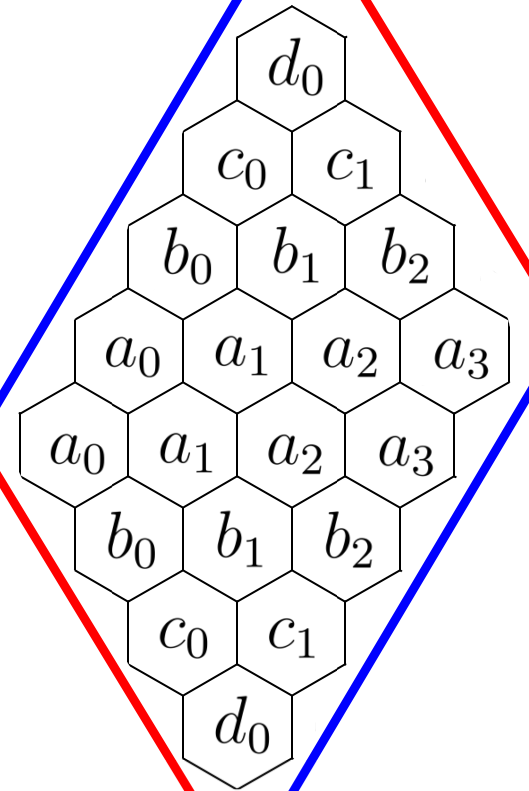}
\caption{The pairing strategy on the $5\times 4$ Hex board.}
\label{asymmetric_board}
\end{figure}

\medskip

We will see in section \ref{hex_empty_draw} that a direct generalisation of this strategy will provide a drawing strategy for the second player in the game of Infinite Hex starting from an empty board.


\section{Infinite Hex}

It is immediate to generalise the board of Hex to the infinite plane; Infinite Hex is a class of games in which the two players, still Red and Blue, alternate placing their respective stones on the hexagons that tile the infinite plane.

Note that the hexagonal tiles of the Infinite Hex board are in bijection with $\mathbb{Z}\times\mathbb{Z}$, so that the players, at each turn, can choose to mark one of at most countably many tiles; hence, the game tree of any game of Infinite Hex can always be embedded in the full countable tree $\omega^{<\omega}$, as we need.

In other words, any position of Infinite Hex, including the positions with infinitely many stones already placed, is the initial position of a well-defined game of Infinite Hex.

\medskip

We next discuss the definition of winning condition, which is less straightforward.

\subsection{Winning condition}\label{winning_condition}

Since the Infinite Hex board has no edges to join, it is not immediately clear how to define a winning condition; we would like to ``join the opposite infinite ends of the plane", so we will now formalise this intuition.

\medskip

Given a position $p$, we aim to determine whether some given player has won.
Moreover, we need to ensure that not both players win in a single play, so to comply with condition \ref{def_game}.\ref{winning_set}.~in the definition of game.

\medskip

We first need some definitions, which can be easily repeated for Blue; let $\widetilde{H}$ be the Infinite Hex board and fix some position $p$, that is an assignment of each cell of $\widetilde{H}$ to either or no player, together with a turn indicator.

An infinite Red path, or Red $\mathbb{Z}$\emph{-chain}, is a function $r:\mathbb{Z}\rightarrow \widetilde{H}$ such that all the cells in the image of $r$ are marked by Red at $p$ and $r(n)$, $r(n+1)$ are adjacent in $\widetilde{H}$ for all $n\in \mathbb{Z}$.

We can now approach the condition to determine whether an infinite Red path is \emph{winning}.\footnote{We could require a winning path $r$ to be \emph{geodesic}, so that the sub-path between any two tiles in $r$ is the shortest Red path between such tiles; however, it will be clear that if Red has a winning path, then Red also has a geodesic winning path.}

\medskip

Pick some cell $h_0\in\widetilde{H}$ and impose an orthogonal system of reference with origin in $h_0$ and axes directed from South to North and from West to East, as in figure \ref{fig_empty_axes}.

Observe that, with some flexibility in the identification of the South-North columns, we can use these axes to obtain unique coordinates for each tile in $\widetilde{H}$; this is best described by figure \ref{fig_axes_coords}.

\begin{figure}[h]
\centering
\begin{subfigure}{.49\textwidth}
  \centering
  \includegraphics[width=.9\linewidth]{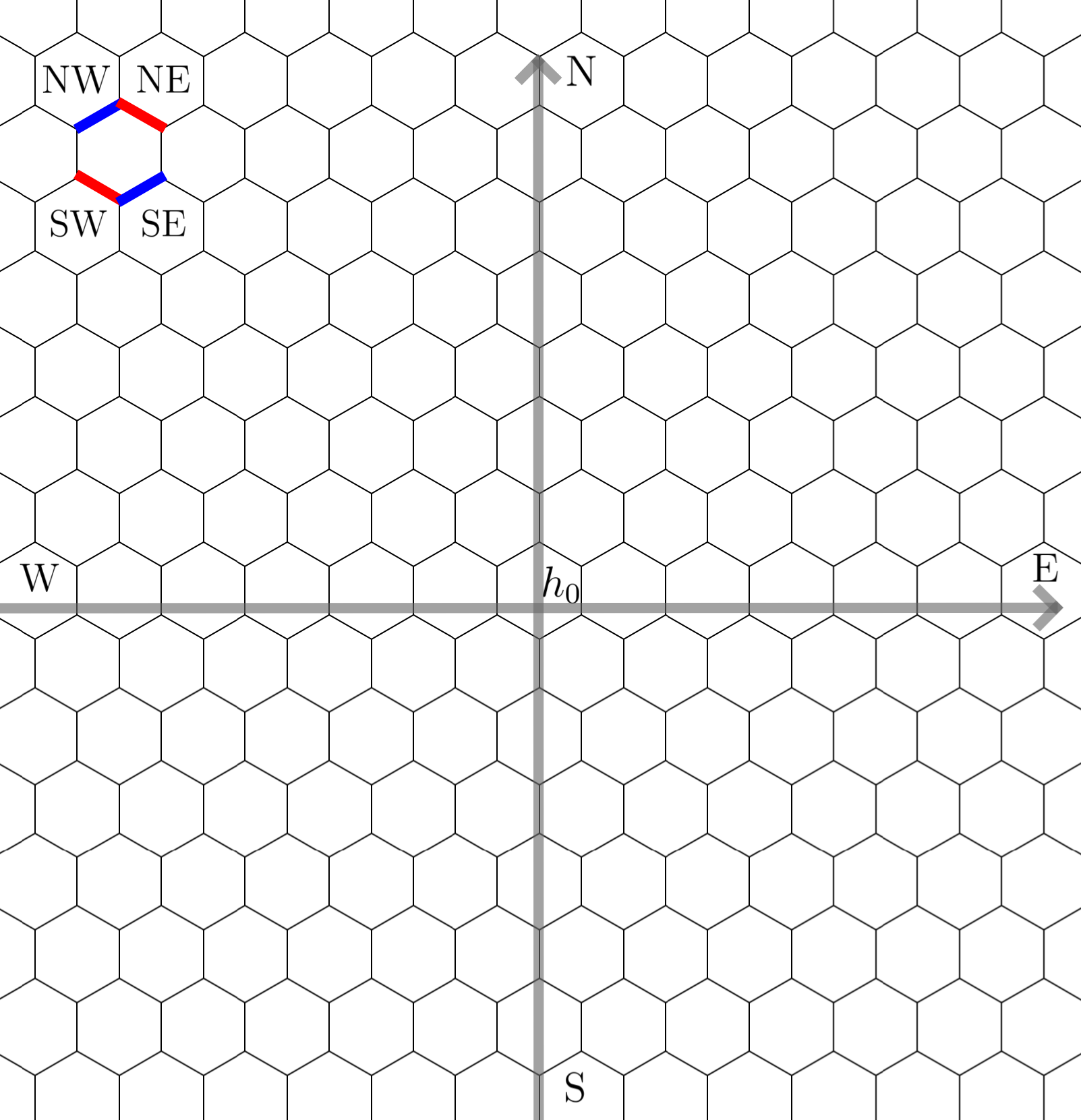}
  \caption{The origin $h_0$}
  \label{fig_empty_axes}
\end{subfigure}
\begin{subfigure}{.49\textwidth}
  \centering
  \includegraphics[width=.9\linewidth]{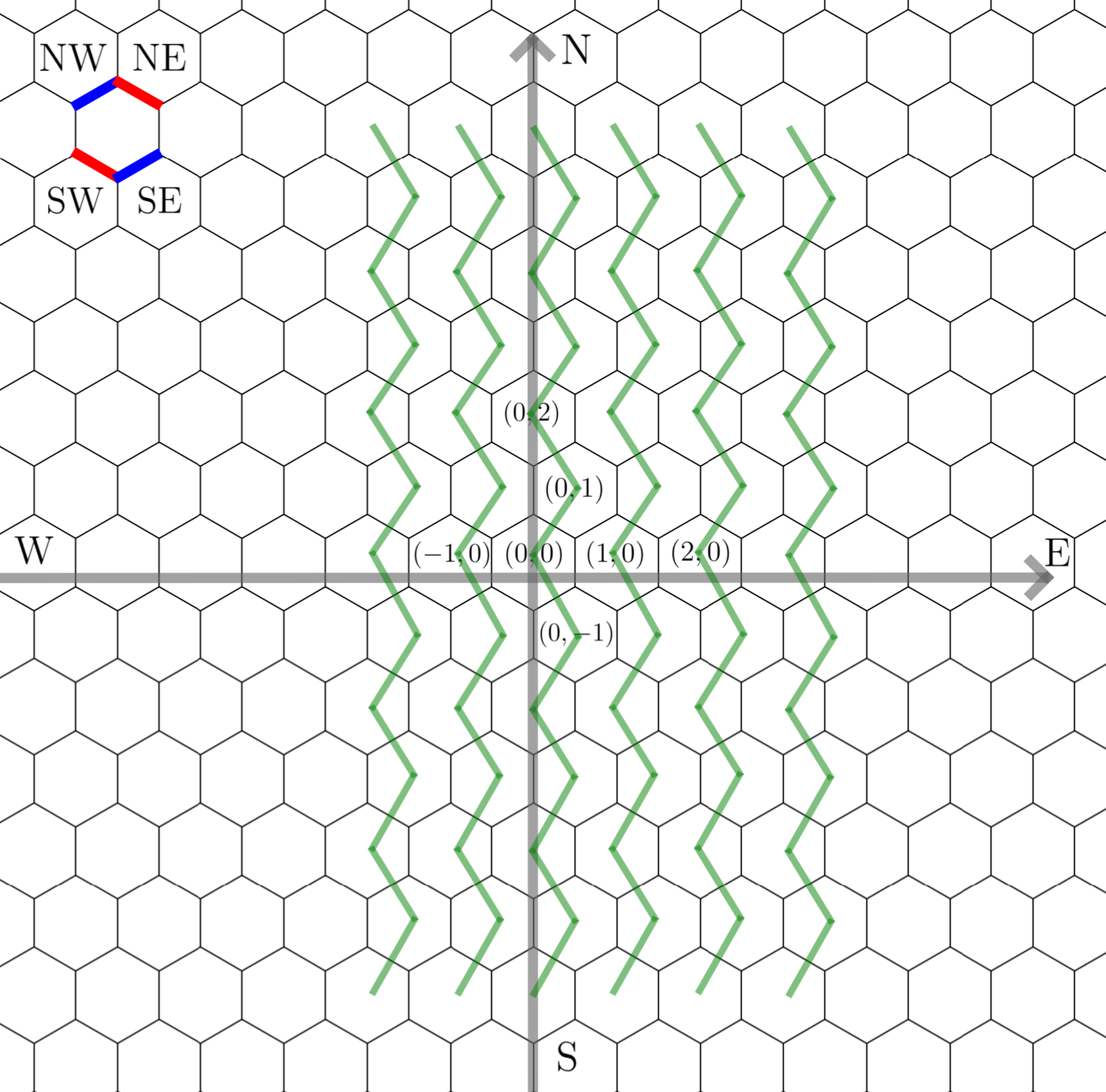}
  \caption{Columns in green}
  \label{fig_axes_coords}
\end{subfigure}
\caption{The reference system for the Infinite Hex board.}
\label{fig_ref_sys}
\end{figure}

It can be useful to keep in mind the sketch of a general Red winning path in figure \ref{sketch}, while reading the following definitions.

\begin{figure}[h]
\centering
\includegraphics[width=4cm]{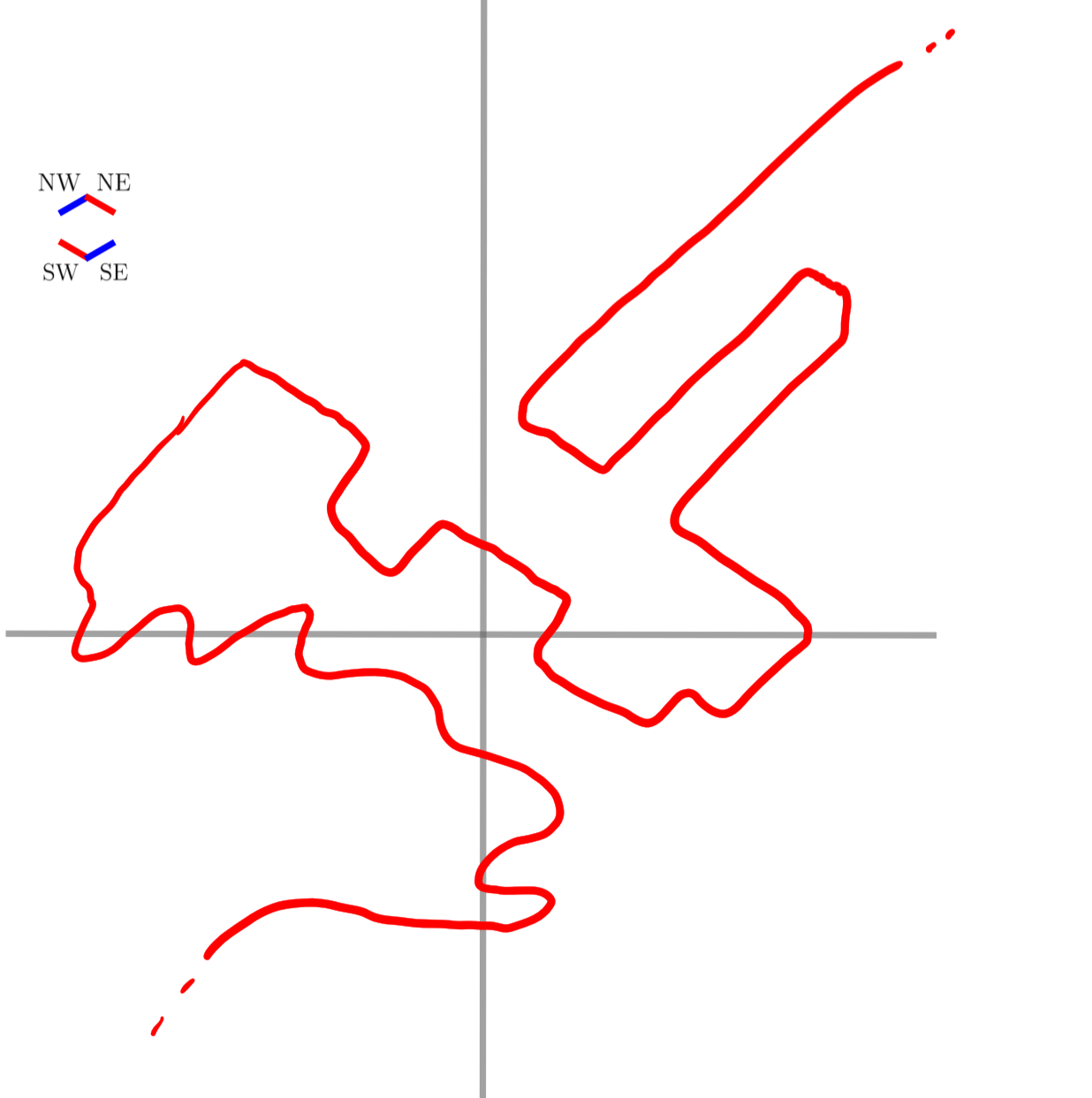}
\caption{Sketch of a general Red winning path.}
\label{sketch}
\end{figure}

Let $r$ be an infinite Red path.

\begin{definition}\label{winning_path_wrt}
Given a system of reference centered at $h_0$ as above, we say that $r$ is \emph{winning with respect to} $h_0$ if $r[\mathbb{Z}^+]$ is eventually contained in the NE-quadrant and $r[\mathbb{Z}^-]$ is eventually contained in the SW-quadrant of such reference system.

More precisely, we say that $r$ is winning with respect to $h_0$ if there is some $M\in\mathbb{N}$ such that if $m\ge M$, then the coordinates of $r(m)$ are both positive, while the coordinates of $r(-m)$ are both negative.
\end{definition}

\begin{definition}\label{winning_path}
We say that $r$ is a \emph{winning} Red path if it is winning with respect to every choice of centre $h$.
\end{definition}

The winning condition for a Blue path is analogous, substituting NW for NE and SE for SW in Definition \ref{winning_path_wrt}, as hinted by the key in figure \ref{fig_ref_sys}.

\medskip

Observe that the arbitrary choice of the origin for the reference system is necessary to avoid considering as wins paths which are bounded in one of the axes' directions; otherwise we would not be able to assign consistently an outcome to the position in figure \ref{fig_tans}, in which the Red and Blue paths are unbounded towards North and South, but bounded in the WE-direction.

In fact, each path is winning for the relevant player with respect to several cells on the paths themselves, while neither path is winning with respect to the starred tile, for instance.

\begin{figure}[h]
\centering
\includegraphics[width=8cm]{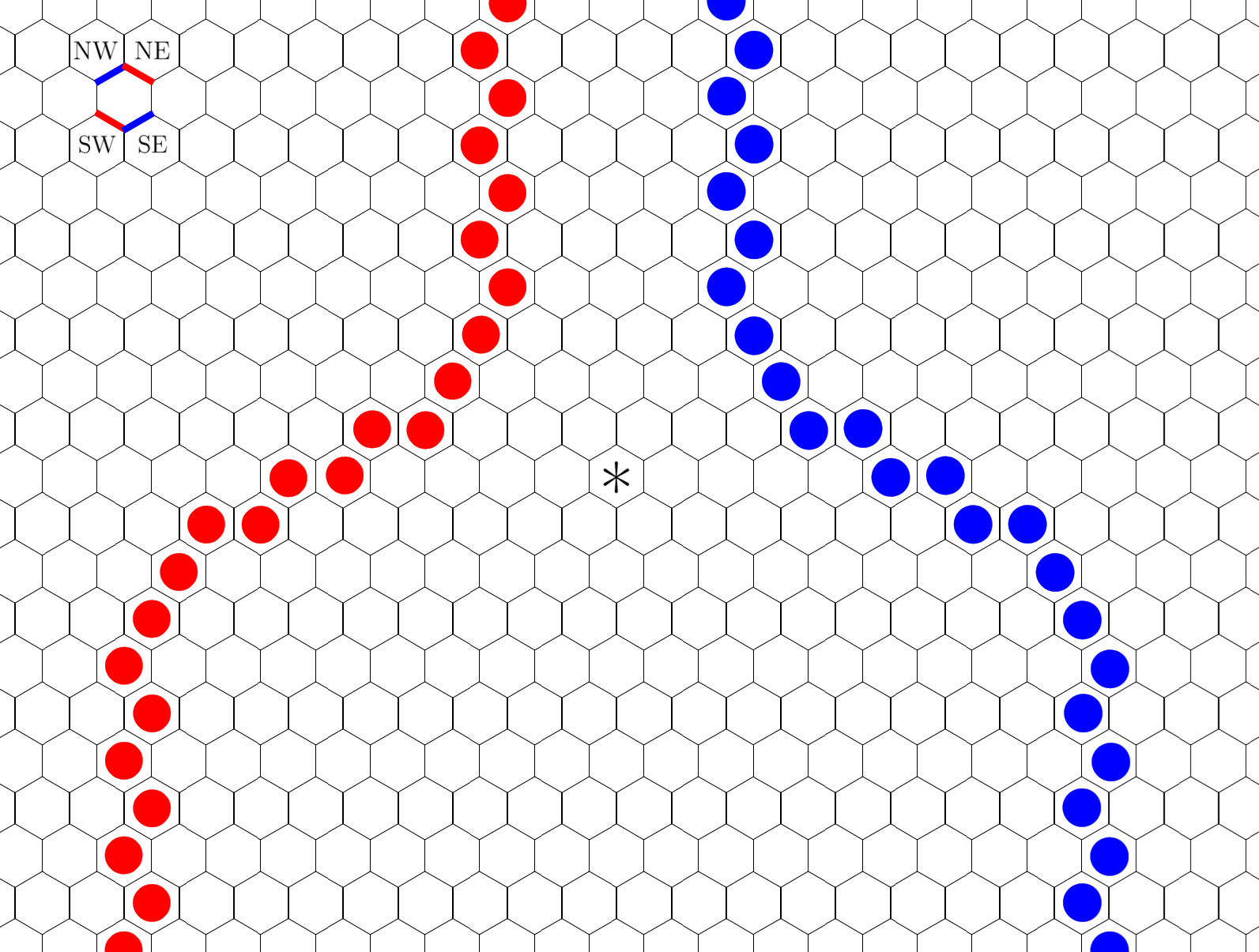}
\caption{Infinite paths bounded in the WE-direction.}
\label{fig_tans}
\end{figure}

\begin{remark}
By insisting in Definition \ref{winning_path_wrt} that the coordinates of $r(m)$ have to be strictly positive, and of $r(-m)$ strictly negative, then we can define a path to be winning, equivalently to Definition \ref{winning_path}, if it is winning with respect to each of its tiles; such a path cannot then be bounded in either of the axes' directions.
\end{remark}

\bigskip

We now show that the condition above is well-defined in the sense of defining disjoint winning conditions for the two players, as per Definition \ref{def_game}.\ref{winning_set}.

\begin{proposition}\label{not_both_win_inf_hex}
Not both players can win in a position of Infinite Hex.
\end{proposition}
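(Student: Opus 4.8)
This statement is the infinite counterpart of Theorem \ref{jordan_hex}, and my plan is to reproduce its topological content: a winning Red path and a winning Blue path, viewed as curves in the plane, would be forced to cross, which is impossible since no cell of $\widetilde{H}$ is marked by both players. So I would assume towards a contradiction that at some position $p$ there are both a winning Red path $r$ and a winning Blue path $b$. I would regard each as a bi-infinite polygonal curve through the centres of its successive (adjacent) cells, and I may assume each is simple (for instance geodesic, as noted in the footnote to the winning-path definition, or by passing to a simple sub-walk). The first observation is that $r$ and $b$ are disjoint as plane curves: their cell sets are disjoint because no cell is assigned to both players, and distinct vertex-disjoint edges of the dual triangular lattice do not cross in its standard planar drawing.

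Next I would fix a reference centre $h_0$ as in Definition \ref{winning_path_wrt}. By Definition \ref{winning_path} both paths are winning with respect to $h_0$, so there is some $M\in\mathbb{N}$ beyond which $r$'s two tails lie in the open NE- and SW-quadrants and, after possibly reversing the indexing of $b$, $b$'s two tails lie in the open NW- and SE-quadrants. The one place where I would genuinely use that the paths are winning with respect to \emph{every} centre is to show that each of the four tails is unbounded: if, say, $r([M,\infty))$ stayed inside a bounded region, then a centre $h$ placed far to the North-East of that region would see every $r(m)$ with $m\ge M$ having negative East-coordinate, contradicting that $r$ is winning with respect to $h$; the remaining tails are symmetric.

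Then I would pick a large closed square $S$ centred at $h_0$ whose four sides face the NE-, SE-, SW- and NW-directions, so that $\partial S$ splits into an NE-side, SE-side, SW-side and NW-side occurring in that cyclic order, chosen big enough that the ``middle'' arcs $r([-M,M])$ and $b([-M,M])$ lie in $\operatorname{int} S$. Following $r$ forward from $r(M)$, the curve stays in $\operatorname{int} S$ until it first meets $\partial S$, and since its whole forward tail lies in the open NE-quadrant this first crossing point lies on the NE-side; by the same reasoning the backward tail from $r(-M)$ first leaves $S$ through the SW-side. Splicing these with $r([-M,M])$ yields a simple arc $\alpha\subseteq r$ inside $S$ running from the SW-side to the NE-side with interior in $\operatorname{int} S$, and in exactly the same way $b$ provides a simple arc $\beta\subseteq b$ from the NW-side to the SE-side; moreover $\alpha\cap\beta=\emptyset$ because $r\cap b=\emptyset$.

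Finally I would invoke the Jordan Curve Theorem — equivalently, the planar crossing fact already underlying Theorem \ref{jordan_hex} — in the topological disk $S$: its boundary is partitioned into the four arcs NE, NW, SW, SE in cyclic order, $\alpha$ joins the opposite pair SW/NE, and $\beta$ joins the other opposite pair NW/SE, so the endpoints of $\alpha$ and of $\beta$ alternate around $\partial S$ and the two arcs must intersect, contradicting $\alpha\cap\beta=\emptyset$. The main obstacle I anticipate is purely the bookkeeping ``at infinity'': one must know that the tails are unbounded and, more sharply, that they leave every sufficiently large square $S$ through the designated sides — and this is precisely what Definition \ref{winning_path}'s insistence on being winning with respect to \emph{every} centre buys us; modulo that, the argument is just the finite Hex picture of Theorem \ref{jordan_hex} transplanted inside $S$.
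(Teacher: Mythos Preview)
Your proof is correct and follows essentially the same approach as the paper: reduce to a bounded region by using the quadrant conditions at a fixed centre, and then invoke the finite crossing obstruction. The only cosmetic difference is that the paper takes the bounded region to be a finite Hex board and cites Theorem~\ref{jordan_hex} directly, whereas you work with a square disk and the alternating-endpoints form of the Jordan Curve Theorem; your treatment of the unboundedness of the tails is in fact more explicit than the paper's.
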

\begin{proof}
Suppose for a contradiction that $p$ is a position of Infinite Hex such that both Red and Blue have winning paths, respectively, $r$ and $b$.

\medskip

Pick some origin $h_0\in \widetilde{H}$ and let $x,y:\widetilde{H}\rightarrow \mathbb{Z}$ be the projections on the WE- and SN-axes centered at $h_0$, so that the coordinates of a tile $h\in\widetilde{H}$ are $(x(h),y(h))$.

Note that both paths $r$ and $b$ are winning, for the relevant players, with respect to $h_0$; hence there is some $M\in\mathbb{N}$ such that for $m\ge M$ we have that\footnote{$M$ is the maximum of the two bounds given by Definition \ref{winning_path} for $r$ and $b$.}
\[x(r(m)), y(r(m)) >0;~~ x(r(-m)), y(r(-m)) <0;
\]
\[x(b(m))<0,~ y(b(m)) >0;~~ x(b(-m))>0,~ y(b(-m)) <0.
\]

Consider now the finite paths $r([-M,M])$ and $b([-M,M])$; we claim that they cannot be both connected.

Let $H$ be the smallest finite Hex board centered at $h_0$ and with the usual orientation, such that it contains both $r([-M,M])$ and $b([-M,M])$.

Now, let $r',b'$ be the maximally connected finite sub-paths of $r,b$ that extend, respectively, $r([-M,M]),b([-M,M])$ in $H$; we claim that they are both winning in $H$, according to the usual finite Hex winning conditions, leading to a contradiction.

\medskip

Observe that, by choice of $M$, the ends of the path $r([-M,M])$, i.e.~$r(-M)$ and $r(M)$, are included in, respectively, the NE- and SW-quadrants of the reference system centered at $h_0$.

As $r'$ extends $r([-M,M])$ without getting out of the relevant quadrants, then $r'$ is a Red winning path in the game of finite Hex played on $H$.

An analogous argument can be made to show that $b'$ is a Blue winning path in the game of finite Hex on $H$, which leads to the contradiction needed, by Theorem \ref{jordan_hex}.
\end{proof}

\medskip

We conclude this section by presenting two ``extreme" positions that motivated the choice of the definition of winning condition given and allowed to discard several other possible definitions.
These positions are draws according to our definition, but can easily be considered wins by either or both players with weaker conditions.

\medskip

In particular, observe how the Red path in figure \ref{fig_double_prongs} is unbounded in the NE-SW-direction, as desired, but also goes through the East-West pink band infinitely many times.

\begin{figure}[h]
\centering
\includegraphics[width=9cm]{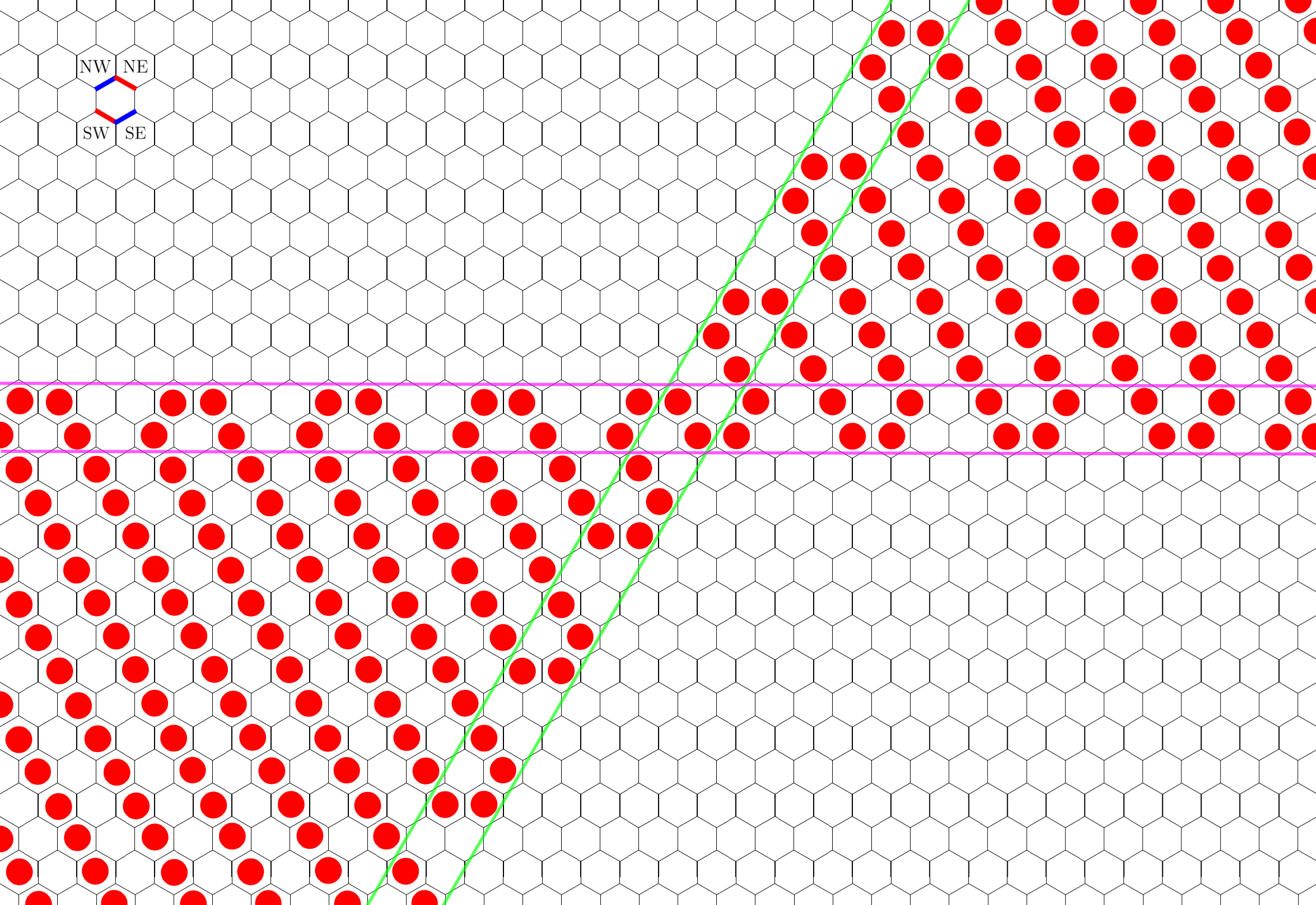}
\caption{Infinite Red path unbounded towards East, West, NE, and SW.}
\label{fig_double_prongs}
\end{figure}

Moreover, the Red and Blue paths in figure \ref{fig_double_spiral} are unbounded in all directions; any coherent winning condition that identifies one of them as winning has to do the same with the other.

\begin{figure}[H]
\centering
\includegraphics[width=9cm]{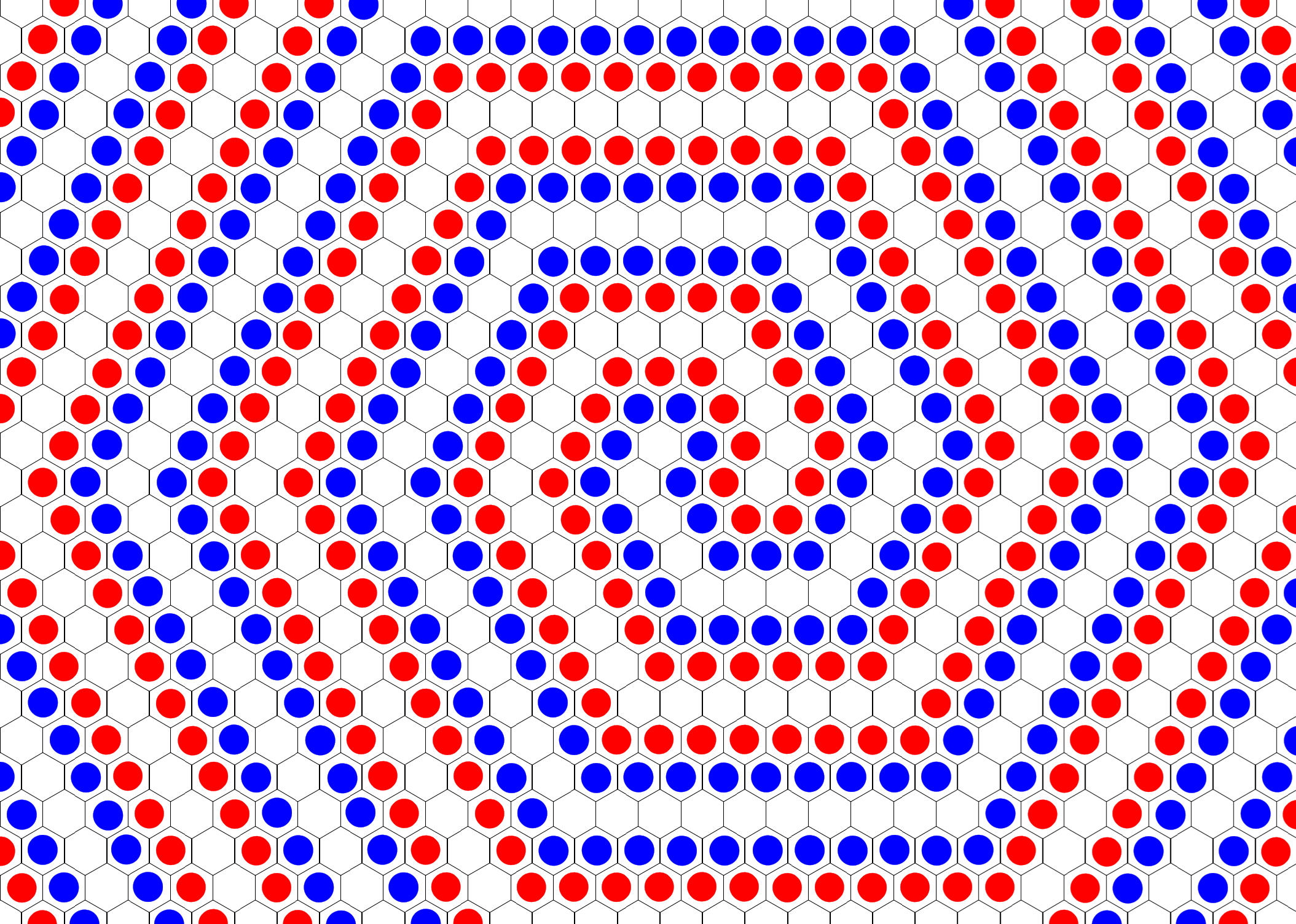}
\caption{Two spiralling Red and Blue paths.}
\label{fig_double_spiral}
\end{figure}

\subsection{Normal Infinite Hex is a draw}\label{hex_empty_draw}

In section \ref{winning_condition} we provided a definition of winning positions, rather than plays; this is because the theory of transfinite game values developed in section \ref{sec_game_values} only applies to open games, and so we are mostly interested in applying such theory to the sub-class of games of Infinite Hex which are open for some designated player.

\medskip

Nevertheless, we can consistently extend Definition \ref{winning_path} to all plays of Infinite Hex by dropping the finiteness condition; a player wins if he has a strategy with which he can eventually construct\footnote{In at most countably many moves, as plays still have order type at most $\omega$.} an infinite path which is winning according to Definition \ref{winning_path}, however his opponent plays.

\medskip

Equipped with this new definition, we can ask the question: who wins from the initially empty position?
We are particularly interested in clarifying whether the advantage of the first player in finite Hex carries on to Normal Infinite Hex, the game of Infinite Hex with the empty board as initial position.

We will show that this is not the case and that both players can force at least a draw from the empty board. However, we later conjecture in section \ref{biased_inf_hex} that the first player may still win if given some advantage, possibly of only one tile.

\medskip

 Let $\theta$ be a pairing of the tiles of the Infinite Hex board as depicted in figure \ref{mirroring_empty}; cells in the same pair have the same label.
 Note that $\theta$ pairs each cell with exactly one other cell.

Let $\mu$ be the second-player strategy, called \emph{mirroring strategy}, induced by $\theta$ as follows;
whenever Red, the first player, places a stone on a tile $h$, then $\mu$ prescribes that Blue places a stone on the the tile paired with $h$, according to $\theta$.

\begin{figure}[h]
\centering
\includegraphics[width=8cm]{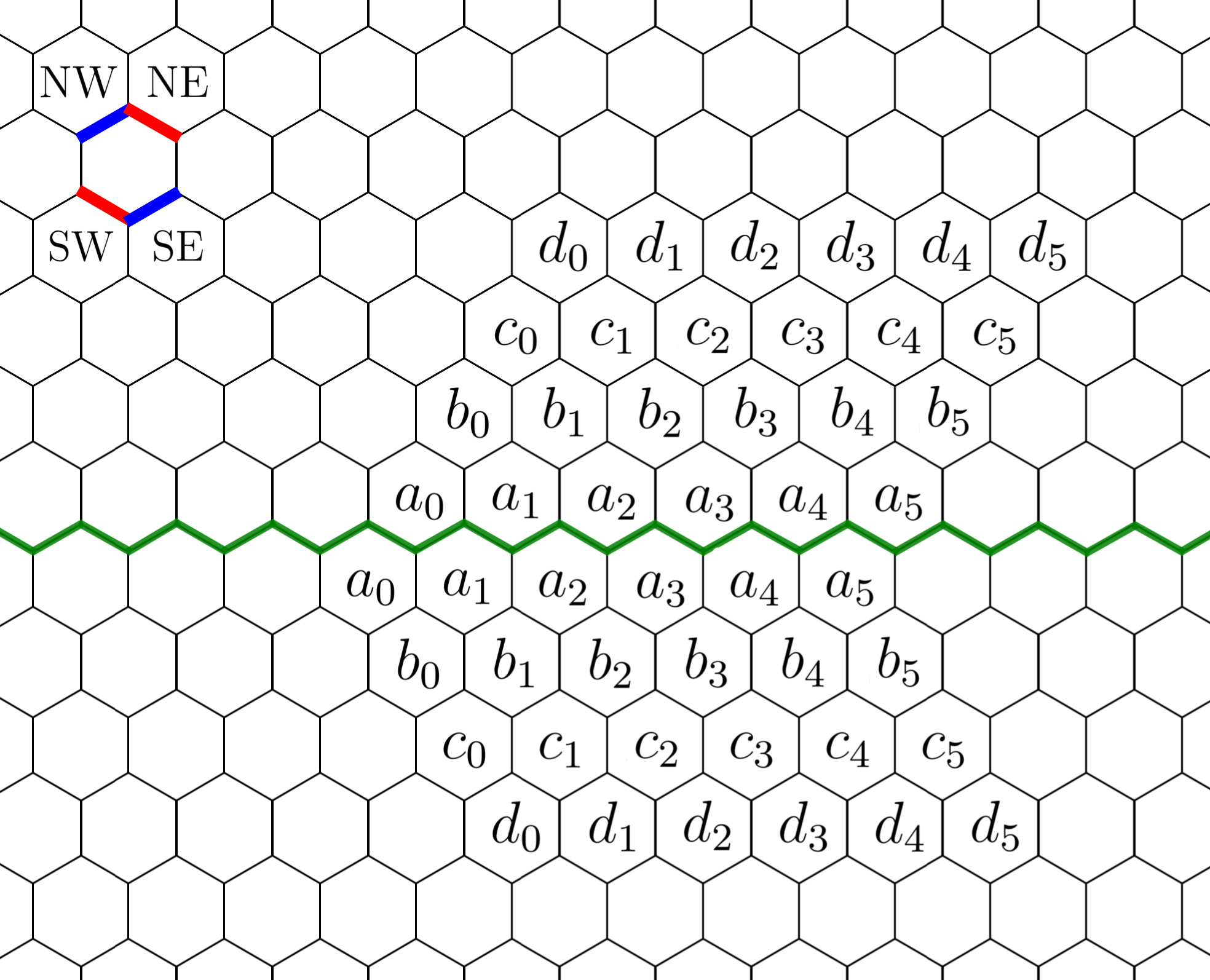}
\caption{The pairing $\theta$ for $\mu$, with the mirroring axis in green.}
\label{mirroring_empty}
\end{figure}

Observe that $\mu$ is a well-defined strategy for Normal Infinite Hex. If Blue follows it since her first move, then Red will not be able to mark over the course of the play both tiles of any pair given by $\theta$; hence, at each turn, the tile indicated by $\mu$ to Blue will always be empty.
Moreover, note that, following $\mu$, Blue always places a stone in the opposite half-plane to where Red played immediately before, with respect to the East-West mirroring axis.

\begin{remark}
Similarly to the Strategy-stealing argument of Proposition \ref{strategy_stealing}, we can adapt $\mu$ to be a first-player strategy.

Let $\theta'$ be the pairing given by reflecting the pairing $\theta$ across some North-South axis.\footnote{Observe that $\theta'$ is essentially unique, given $\theta$.}
Let $\mu'$ be the strategy that prescribes Red to make an arbitrary initial move and then follow the (second-player) mirroring strategy induced by $\theta'$, possibly making arbitrary moves when the mirroring strategy prescribed to place a stone on a tile already marked by Red; this is because the mirroring strategy ignores the initial move and the subsequent arbitrary moves by Red.

Since the reflection of all possible winning paths for Red across a North-South axis gives exactly all possible winning paths for Blue, it is then clear that showing that $\mu$ is a drawing strategy for Blue implies that $\mu'$ is a drawing strategy for Red.
\end{remark}

\begin{remark}
The pairing in figure \ref{mirroring_empty} is called mirroring because it is not a symmetry and does not represent a rigid motion of the board. In particular, we obtain the Northern half-plane as the reflection across the mirroring axis of the Southern half-plane, followed by a translation by half a cell towards East; this translation is in effect ``tearing" the plane.
\end{remark}

\begin{remark}\label{mirror_condition}
As already observed, when Blue plays according to $\mu$, Red can mark at most one tile in each pair given by $\theta$.

It follows that if Red constructs some winning path $r$, then $r$ contains at most one tile in each pair given by $\theta$, so that Blue can mark all the cells of $\theta(r)$, the \emph{mirroring} of $r$ with respect to $\theta$; we call this the \emph{mirroring condition} on $r$.
\end{remark}

Before proving the main result of this section, we make two observations.

Firstly, any Red path that crosses the mirroring axis of $\mu$ can only do so with two adjacent cells in the NW-SE-direction, that is the ``wrong" direction for Red. This will be key for the argument, and is clear by inspection of figure \ref{fig_mirroring_1_int}.

\begin{figure}[h]
\centering
\includegraphics[width=6cm]{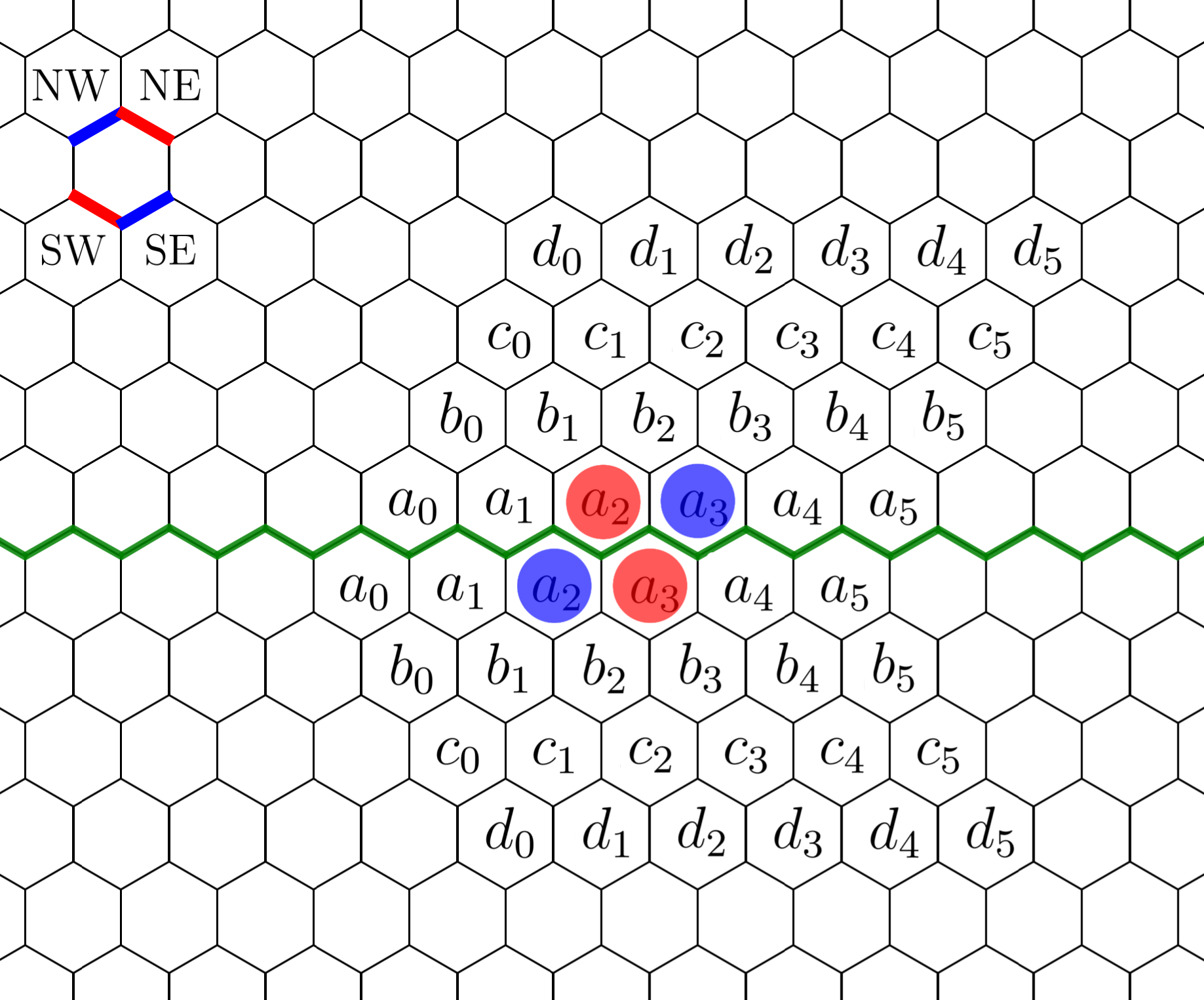}
\caption{Single Red intersection of the mirroring axis.}
\label{fig_mirroring_1_int}
\end{figure}

Secondly, $\mu$ allows Blue to win, as long as Red plays appropriately bad.
Observe that the position in figure \ref{fig_mirroring_blue_win} is winning for Blue and satisfies the mirroring condition of Remark \ref{mirror_condition} for the Red path.

\begin{figure}[h]
\centering
\includegraphics[width=7cm]{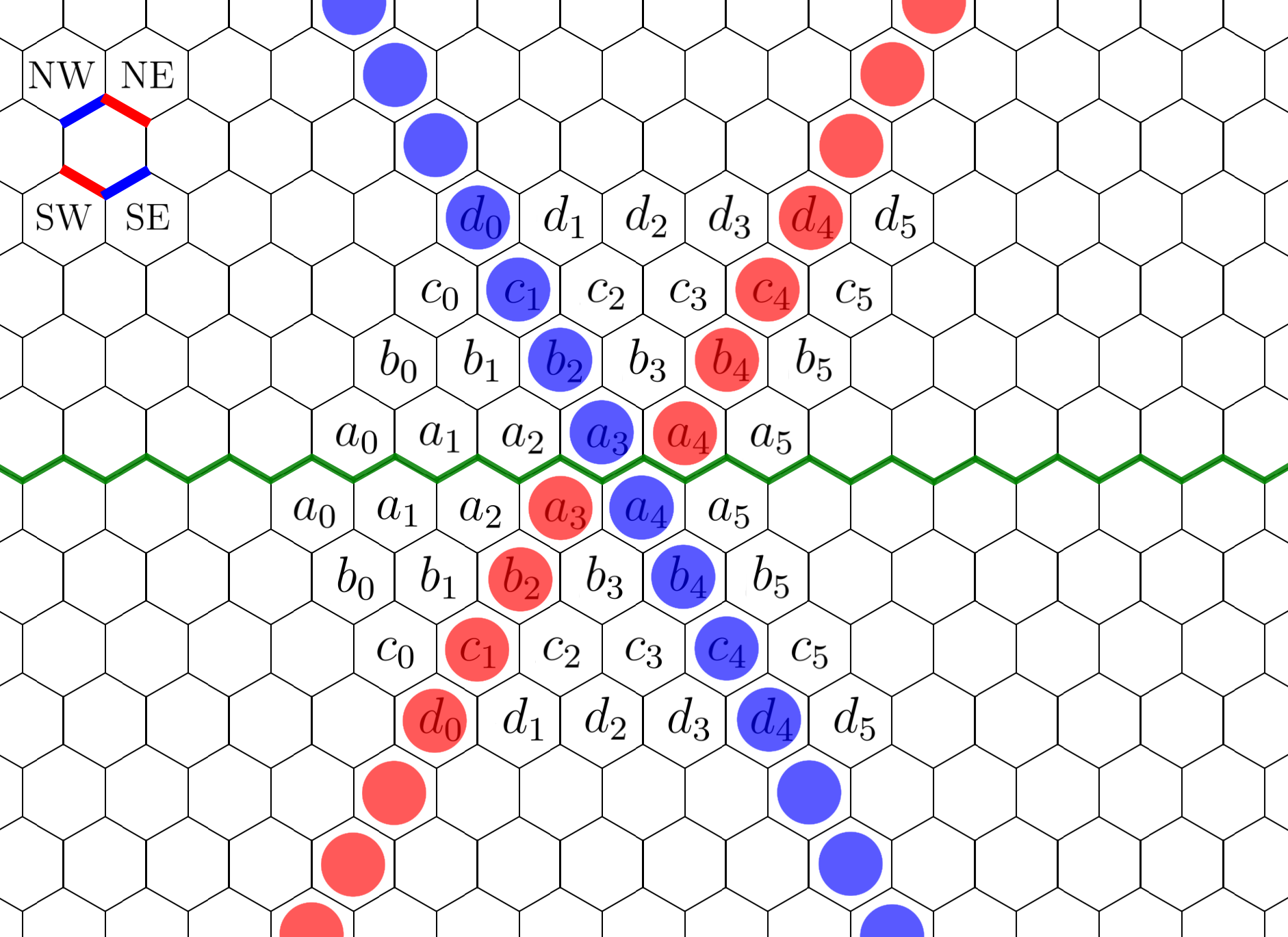}
\caption{Blue wins following the mirroring strategy $\mu$.}
\label{fig_mirroring_blue_win}
\end{figure}

\begin{proposition}\label{hex_drawing_strategy}
In Normal Infinite Hex, $\mu$ is a drawing strategy for the second player.
\end{proposition}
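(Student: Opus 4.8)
The plan is to prove the statement in its contrapositive shape: I will show that if Blue follows $\mu$ from her first move, then Red can never realise a winning path, so that every resulting play is either a draw or a win for Blue, and in particular never a loss for Blue. Since it was already observed that $\mu$ is a well-defined strategy for Normal Infinite Hex, this is exactly the assertion that $\mu$ is a drawing strategy for the second player.

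So suppose, toward a contradiction, that in some play with Blue following $\mu$ the first player completes a winning Red path $r\colon\mathbb{Z}\to\widetilde{H}$. Because $\mu$ answers each Red move on a tile $h$ by a Blue move on $\theta(h)$, and because, by the mirroring condition of Remark \ref{mirror_condition}, Red never marks both tiles of a $\theta$-pair, every tile of the mirrored path $\theta(r)=\theta\circ r$ has been marked by Blue. The first substantive step is to check that $\theta(r)$ is not merely a set of Blue tiles but an infinite Blue path, i.e.\ a $\mathbb{Z}$-chain. On each of the two half-planes cut out by the mirroring axis the map $\theta$ is a reflection followed by a half-cell translation and hence preserves adjacency there, so the only place connectedness could fail is where $r$ crosses the mirroring axis. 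By the first observation preceding the statement (figure \ref{fig_mirroring_1_int}), a Red path can cross the axis only through two tiles adjacent in the NW--SE direction, and by inspection of that same figure the image under $\theta$ of such a crossing domino is again a pair of adjacent tiles; hence consecutive tiles of $\theta(r)$ are always adjacent, so $\theta(r)$ is a bi-infinite Blue $\mathbb{Z}$-chain.

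Next I would verify that $\theta(r)$ is in fact a \emph{winning} Blue path in the sense of Definitions \ref{winning_path_wrt} and \ref{winning_path}. Reflection across the East--West mirroring axis interchanges, for any reference system, the NE-quadrant with the SE-quadrant and the SW-quadrant with the NW-quadrant, and the accompanying half-cell translation is bounded, so it cannot affect whether a one-sided sequence of tiles is \emph{eventually} contained in a prescribed quadrant. Hence, fixing an arbitrary origin $h$, the statement that $r$ is winning with respect to $h$ --- that $r[\mathbb{Z}^+]$ lies eventually in the NE-quadrant and $r[\mathbb{Z}^-]$ eventually in the SW-quadrant centred at $h$ --- turns into the statement that $\theta(r)[\mathbb{Z}^+]$ lies eventually in the SE-quadrant and $\theta(r)[\mathbb{Z}^-]$ eventually in the NW-quadrant centred at $\theta(h)$, which is precisely the winning condition for Blue with respect to $\theta(h)$. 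As $h$ ranges over all tiles so does $\theta(h)$, so $\theta(r)$ is winning for Blue with respect to every origin.

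Now the position just reached displays simultaneously a winning Red path $r$ and a winning Blue path $\theta(r)$, contradicting Proposition \ref{not_both_win_inf_hex}; hence no play in which Blue follows $\mu$ can be a win for Red, as desired. I expect the one genuinely delicate point to be the connectedness of $\theta(r)$ where it crosses the mirroring axis: the ``tearing'' half-cell shift in the definition of $\theta$ is exactly what could destroy an adjacency there, and it is the forced NW--SE orientation of Red's crossings, guaranteed by the first observation, that rescues the argument, since those are precisely the crossing dominoes whose $\theta$-images remain adjacent.
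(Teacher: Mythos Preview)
Your argument has a genuine gap at exactly the point you flag as delicate: the claim that the $\theta$-image of a Red crossing domino is again a pair of adjacent tiles is false. Along the axis the $\theta$-pairs are precisely the NE--SW adjacent dominoes (that is why Red is forced to cross in the NW--SE direction in the first place). If the Red crossing consists of the cell $a$ in the South and the cell $b$ in the North with $b$ to the NW of $a$, then $\theta(a)$ is the cell NE of $a$ and $\theta(b)$ is the cell SW of $b$; a direct check shows that $\theta(a)$ and $\theta(b)$ are two hex-steps apart, and the two cells sitting between them are exactly $a$ and $b$, which are Red. So $\theta(r)$ is genuinely disconnected at every crossing, Blue's stones do not form a $\mathbb{Z}$-chain, and you cannot invoke Proposition~\ref{not_both_win_inf_hex}. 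The ``tearing'' half-cell shift really does tear: it destroys adjacency across the axis in the NW--SE direction just as it creates the $\theta$-pairs in the NE--SW direction.

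The paper circumvents this obstruction rather than repairing it. It does use your observation that the mirrored half of $r$ in the Northern half-plane gives a Blue sub-path heading SE in the Southern half-plane, but instead of trying to join it to the other mirrored half, it closes off a finite region with an auxiliary Red arc to produce a Jordan curve that the Southern Red tail of $r$ would have to cross, and then handles the (finitely many, odd) crossings case by case. Your directional analysis of the two mirrored half-paths is essentially the first step of the paper's single-crossing case; what is missing is a topological obstruction argument to replace the connectivity you do not have.
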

\begin{proof}
Suppose for a  contradiction that Red has a winning strategy $\sigma$.

Letting Red play according to $\sigma$ and Blue according to $\mu$, Red will be able to construct a winning path\footnote{Such path is only constructed after infinitely many turns, as described at the beginning of this section.} $r$ which satisfies the mirroring condition of Remark \ref{mirror_condition}, so that $r$ does not contain both tiles of any of the pairs of $\theta$.

\medskip

Now, we claim that $r$ cannot be a winning path.
Observe that $r$ has to cross the East-West mirroring axis at least once.
So, suppose that $r$ crosses the mirroring axis exactly once. We then expect a single Red intersection as in figure \ref{fig_mirroring_1_int}.

Since $r$ is winning for Red, then the semi-infinite sub-path of $r$ in the Northern half-plane will eventually go towards North-East; by definition of $\mu$, Blue will correspondingly construct the mirrored sub-path in the Southern half-plane that eventually goes towards South-East, as in figure \ref{one_intersec}.

However, we also have that the sub-path of $r$ in the Southern half-plane must eventually go towards South-West, but if that is the case, then we can draw a finite Red path\footnote{This extra Red finite path is drawn directly on the board for the sake of the geometric argument; it is not part of the play.} as in figure \ref{one_intersec2} so that we can identify a simple closed curve determined by adjacent Red stones, such curve is a Jordan curve which is intersected by a \emph{connected} Blue path; this is a contradiction by the Jordan Curve Theorem.

\begin{figure}[h]
\centering
\begin{subfigure}{.49\textwidth}
  \centering
  \includegraphics[width=.82\linewidth]{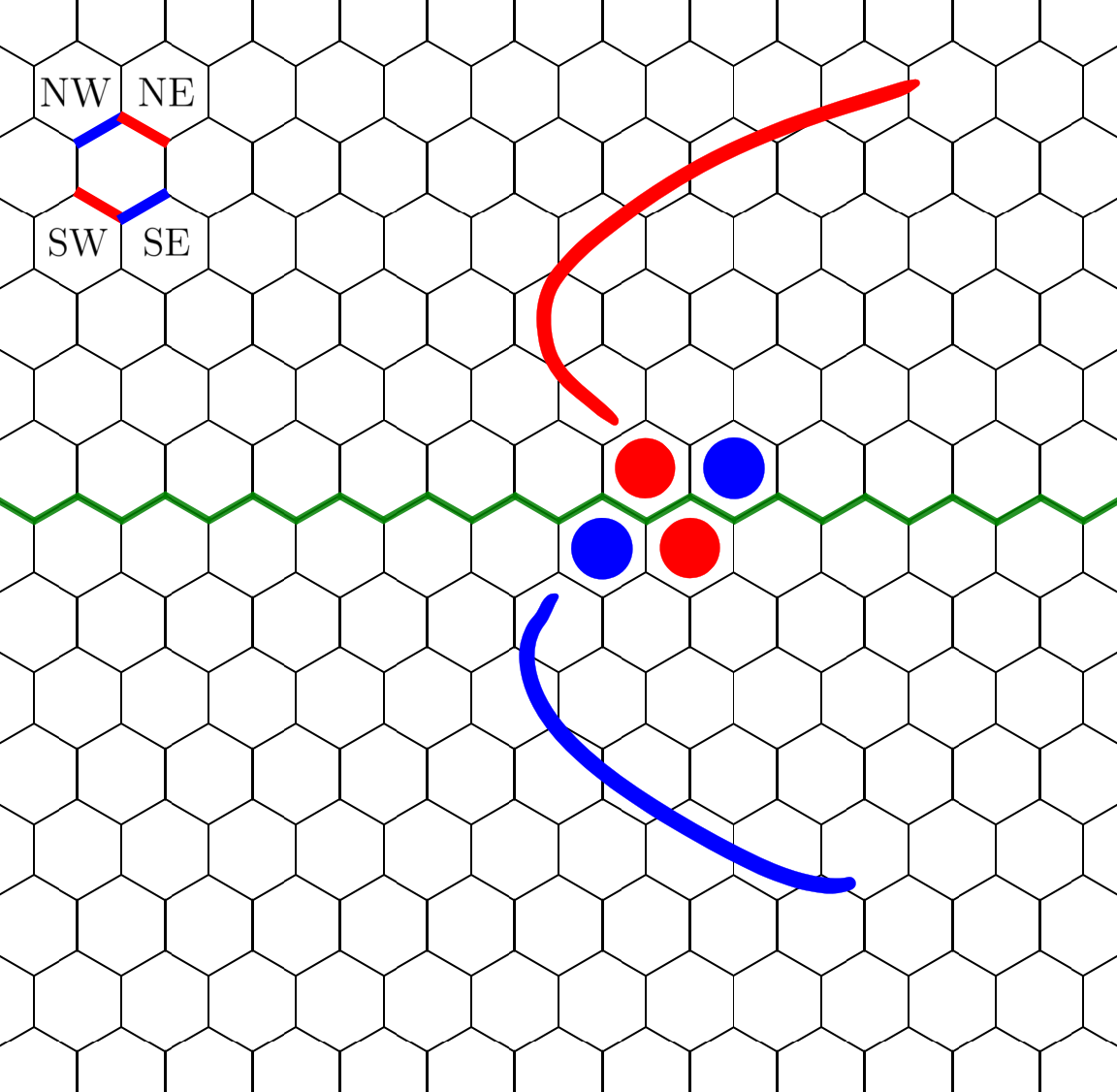}
  \caption{Mirroring an half-infinite Red path}
  \label{one_intersec}
\end{subfigure}
\begin{subfigure}{.49\textwidth}
  \centering
  \includegraphics[width=.9\linewidth]{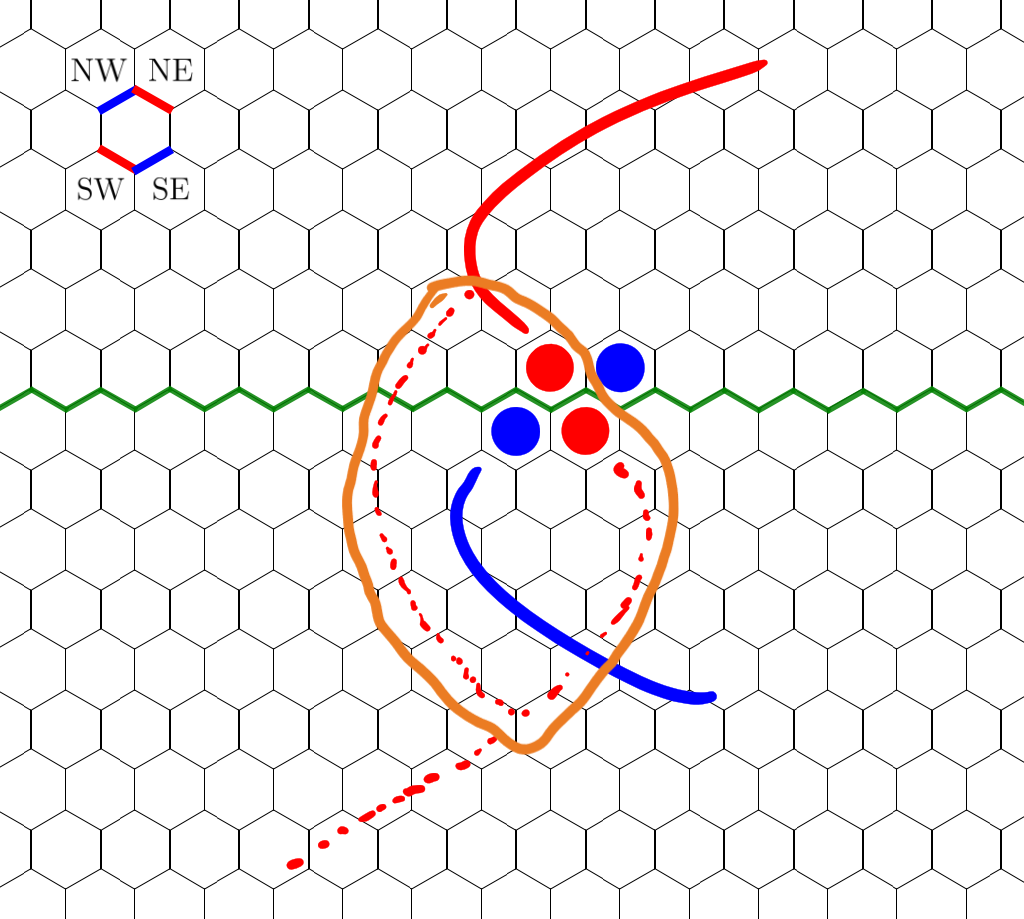}
  \caption{The Jordan curve in orange}
  \label{one_intersec2}
\end{subfigure}
\caption{Application of the Jordan Curve Theorem.}
\label{one_intersec0}
\end{figure}


Suppose now that $r$ crosses the mirroring axis more than once; $r$ cannot cross it exactly twice because it is a winning path, and in particular a connected path, thus assume that $r$ crosses the mirroring axis exactly thrice.

We expect to see three Red intersections of the mirroring axis as in figure \ref{3intersec}; these were discussed before in figure \ref{fig_mirroring_1_int}.

\begin{figure}[H]
\centering
\begin{subfigure}{.49\textwidth}
  \centering
  \includegraphics[height=4.2cm]{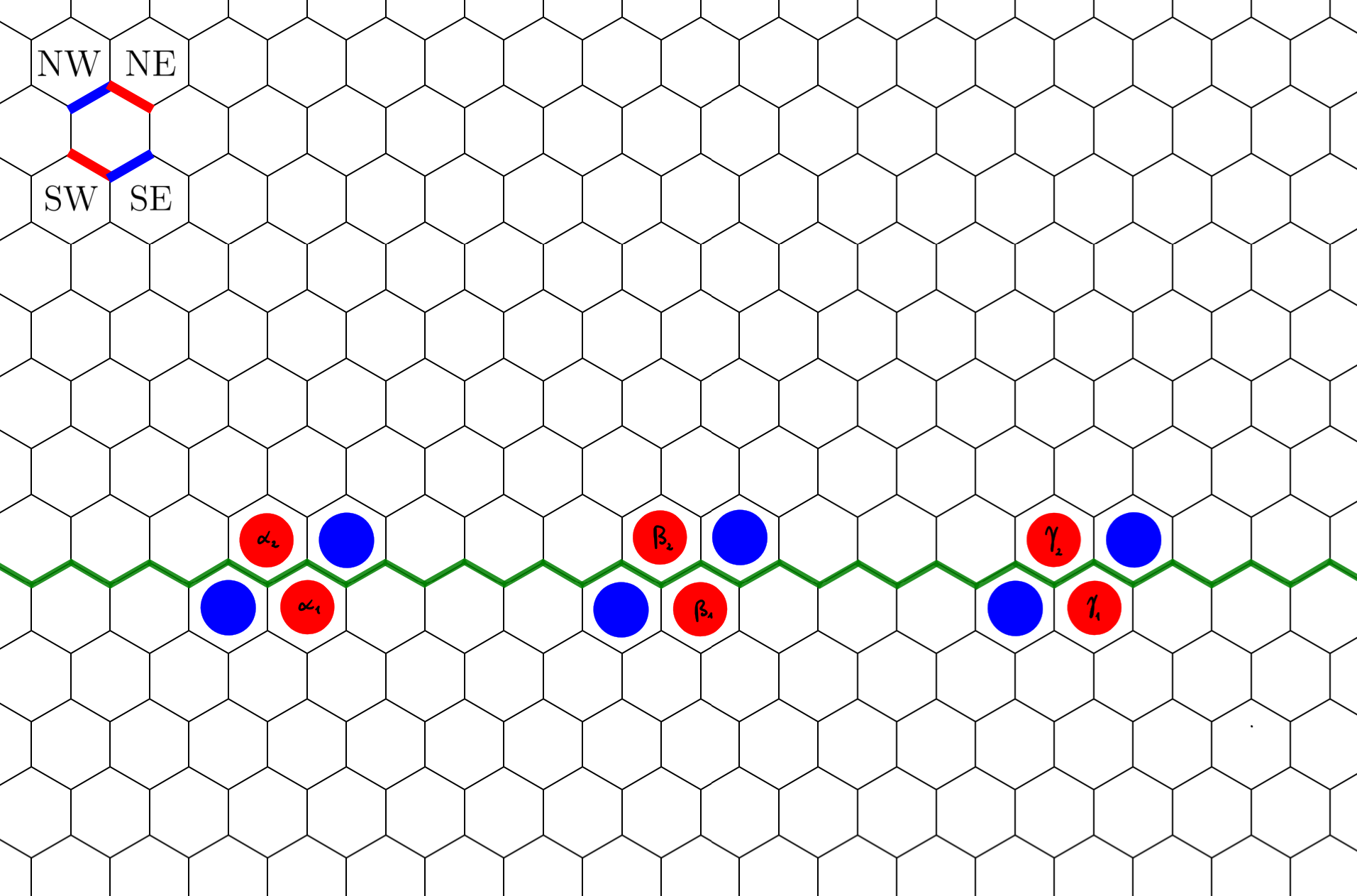}
  \caption{Three intersections}
  \label{3intersec}
\end{subfigure}
\begin{subfigure}{.49\textwidth}
  \centering
  \includegraphics[height=4.2cm]{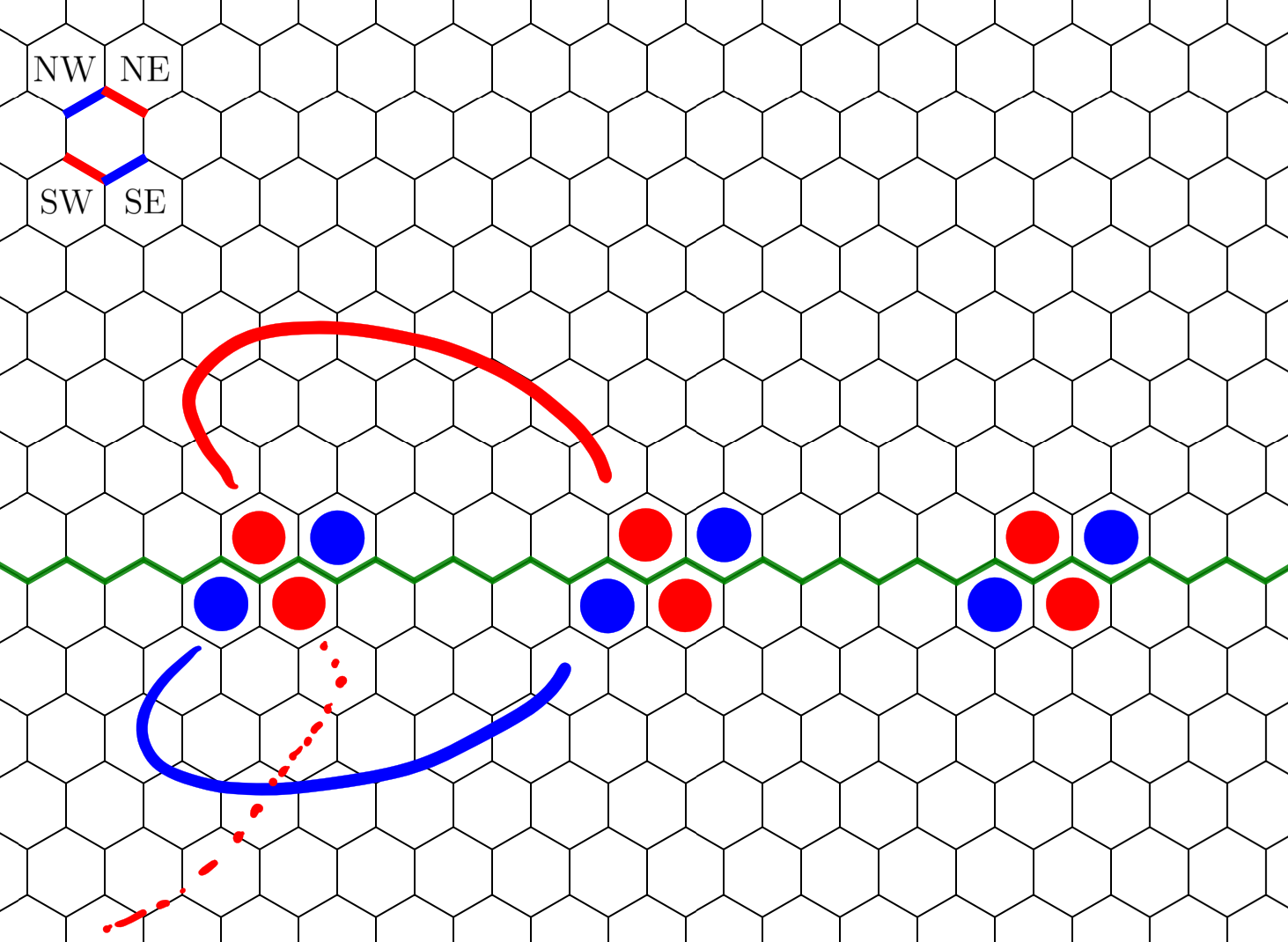}
  \caption{Contradiction}
  \label{3intersec2}
\end{subfigure}
\caption{A Red path cannot cross the mirroring axis three times.}
\label{3_intersec0}
\end{figure}


Observe that the only way to construct such a path $r$ is to join with Red finite paths the ends labelled $\alpha_2$ with $\beta_2$, and $\beta_1$ with $\gamma_1$, and joining appropriate semi-infinite Red paths to the ends labelled $\alpha_1$ and $\gamma_2$.

Such a construction is not possible due to the mirroring condition of Remark \ref{mirror_condition}; observe in figure \ref{3intersec2} how joining two ends in the same half-plane prevents one end in the opposite half-plane to be extended indefinitely.

These observations clearly hold by applying the Jordan Curve Theorem similarly to the above.

\medskip

Observe that the argument for $r$ intersecting the mirroring axis three times can be easily adapted to any position in which $r$ intersects the mirroring axis a larger (odd) number of times.

Since $r$ can only intersect the mirroring axis finitely many times, as it is a winning path, we then conclude that $r$ cannot intersect the mirroring axis more than thrice, either. The proof is complete.
\end{proof}

\begin{remark}
It is possible to prove Proposition \ref{hex_drawing_strategy} via a more involved argument by contradiction which reduces a Red winning path to a winning path for the disadvantaged player of a game of finite Hex on an asymmetric board.

In this sense, we can think of Infinite Hex as generalising the asymmetric variant of finite Hex, rather than the standard finite version.
\end{remark}

\section{Positional and stone-placing games}\label{sec_positional_games}

\subsection{Positional games}\label{subsec_positional_games}

Many popular board games belong to the class of positional games, which share the property of extra moves being always advantageous as in Hex,\footnote{See Remark \ref{rmk_finite_hex}.\ref{extra_stone}.} and whose simplest representative is the game of Tic-tac-toe.

Even though universally popular, we mention the rules of this game; in Tic-tac-toe two players, Naughts and Crosses, alternatively place their marks on a $3\times 3$ square board. The first player to mark three squares on a straight line wins, i.e.~the first player to mark all 3 elements of any of 8 distinguished subsets of the board wins.

Note that the sets of board cells that allow the players to win are the same for both players.

\medskip

We define such ``Tic-tac-toe-like games" as in \cite{hefetz14}; in order to additionally allow for infinite games, we define the following structure.

\begin{definition}
A \emph{generalised hypergraph} is a pair of sets $(B,\mathcal{F})$, such that $B$ is the collection of \emph{vertices}, and $\mathcal{F}\subset\mathcal{P}(B)\setminus\emptyset$ is the collection of \emph{hyperedges}.
\end{definition}

Note that from now on we will not distinguish between generalised hypergraphs and \emph{hypergraphs}, which are usually defined stressing the finiteness of both the vertex and hyperedge sets.

\medskip

\begin{definition}\label{def_pos_game}A \emph{positional game} played on a hypergraph $(B,\mathcal{F})$ is a game such that the players alternatively mark previously unchosen elements of the vertex set $B$, which represents the set of cells of the game board that are unmarked at the initial position.

The first player to mark all the vertices of some hyperedge in $\mathcal{F}$ wins; here $\mathcal{F}\subset \mathcal{P}(B)\setminus \emptyset$ is called the collection of \emph{winning sets}.
\end{definition}

Recall from Remark \ref{rmk_position} that a \emph{position} in a positional game is an assignment of some of the board cells to the players, together with the history of moves and a turn indicator.

\begin{remark}
Observe that, in Definition \ref{def_pos_game}, specifying that only the first player to satisfy the winning condition is the winner ensures openness of the game for both players and is necessary to avoid ambiguity.

See for instance the Tic-tac-toe board position in figure \ref{tris}; both players seem to be winning, but assuming that Crosses is the first player of the game, then we can say that, for any history of legal moves leading to that board position, the resulting position is a win for Crosses.
\end{remark}

\begin{figure}[h]
\centering
\includegraphics[width=3.5cm]{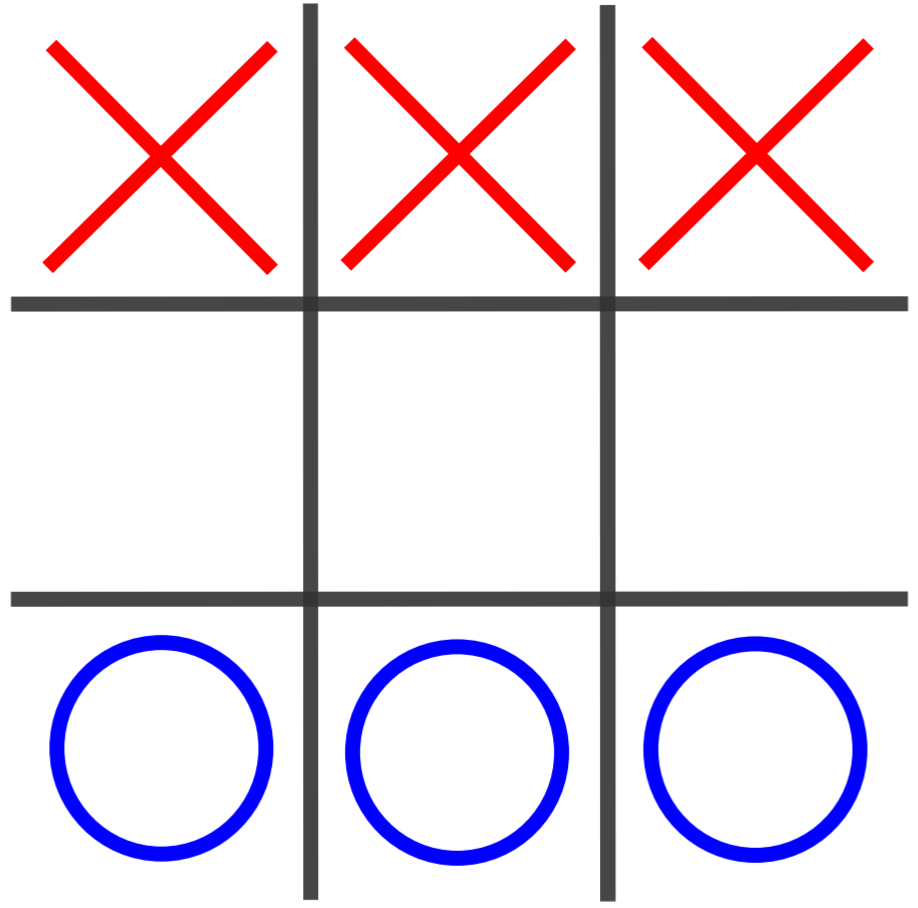}
\caption{An ambiguous Tic-tac-toe board position.}
\label{tris}
\end{figure}

\medskip

\begin{remark}[Strategy-stealing]
Observe that Nash's Strategy-stealing argument\footnote{See proof of Proposition \ref{strategy_stealing}.} can be applied to show that any positional game played on $(B,\mathcal{F})$ is either a draw or a first-player win.

In fact, if the second player has a winning strategy, then the first player can use it after having made an arbitrary initial move, leading to a contradiction; this holds because both players have exactly the same winning conditions, which are represented by $\mathcal{F}$, and because in all positional games it is always advantageous to make an extra move, i.e.~to choose one extra vertex.
\end{remark}

\medskip

Thus, the best outcome for the second player in a positional game is achieving a draw. For this reason, we do not lose generality when considering the class of games defined below.

\begin{definition}A \emph{Maker-Breaker game} played on a hypergraph $(B,\mathcal{F})$ is a game in which Maker wins by choosing all the vertices of some winning set in $\mathcal{F}$, while Breaker wins by choosing at least one vertex of each winning set in $\mathcal{F}$.\footnote{A different definition could let Breaker win whenever Maker does not win; such a no-tie variant 
is not equivalent to our definition in the setting of infinite games.
Not all vertices in $B$ may be marked over some plays and neither player may satisfy their original winning condition over such plays, so that we would consider such plays ties with our definition and wins for Breaker with the no-tie variant.}
\end{definition}

\begin{remark}
Clearly, Breaker's goal is precisely to prevent Maker from winning; it follows that a play of any Maker-Breaker game cannot allow both players to win.
\end{remark}

\label{duality}
Moreover, we can see that there is a duality between Maker and Breaker that preserves strategies, but not game openness.

Namely, take a Maker-Breaker game $\mathcal{G}$ open for Maker and played on some hypergraph $(B,\mathcal{F})$; observe that such a game is equivalent\footnote{Equivalent from the point of view of the closed player.} to the Maker-Breaker game $\mathcal{G}'$ not necessarily open for Breaker and played on the hypergraph $(B,\mathcal{F}')$, but with the players' roles reversed, where
\[\mathcal{F}'=\{F'\in \mathcal{P}(B):\lvert F'\cap F\rvert \ge 1 {\rm ~for~all~} F\in \mathcal{F}\}.\]
Observe that Breaker aims to mark at least one vertex in each of the winning sets in $\mathcal{F}$, so that $\mathcal{F}'$ does represent the correct winning condition in the game with the roles reversed.

\bigskip

\begin{remark}\label{mb_finite_hex}
Recalling the no-tie property of finite Hex,\footnote{See Corollary \ref{no_tie}.} we observe that a finite Hex player who prevents the opponent from winning is equivalent to the player himself winning. In other words, the best offensive moves in finite Hex are exactly the best defensive moves, and vice versa.

Hence, we see that finite Hex is a Maker-Breaker game played on the hypergraph $(H,\mathcal{C})$ such that the vertex set $H$ is in bijection with the empty finite Hex board and $\mathcal{C}$ is the collection of all the winning paths for Red, who corresponds to Maker.
\end{remark}

\medskip

What we mentioned in section \ref{results_finite_hex} for asymmetric Hex boards is that no winning component for the player who is joining the short edges can avoid containing two cells paired with respect to the opponent's strategy.

A weaker graph-theoretic property that follows from this pairing is that of \emph{proper 2-colourability} of $(H_{asym},\mathcal{C}_{disadv})$, where $H_{asym}$ represents the relevant empty asymmetric board, and $\mathcal{C}_{disadv}$ is the collection of winning paths for the disadvantaged player, who is losing; that is, $H_{asym}$ can be coloured with two colours, say black and white, such that no collection of tiles that represents a winning path in $\mathcal{C}_{disadv}$ is monochromatic.

Indeed, the cell pairing given by the mirroring strategy described in figure \ref{asymmetric_board} is a particular case of such a \emph{proper 2-colouring}, which is obtained by colouring with distinct colours the cells in each pair.

\medskip

We prove the following result in appendix \ref{app_2_col_mb}.

\begin{proposition}\label{mb_2_colouring}
Take some Maker-Breaker game $\mathcal{G}$ played on a hypergraph $(B,\mathcal{F})$ such that $\mathcal{G}$ is open for Breaker, and Maker is the first player to move.
If Breaker has a winning strategy, so that he can play to take at least one vertex in each of Maker's winning sets in $\mathcal{F}$, then $(B,\mathcal{F})$ is properly 2-colourable.
\end{proposition}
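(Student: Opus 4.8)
The plan is to extract a proper 2-colouring of $(B,\mathcal F)$ directly from Breaker's winning strategy by a strategy-stealing argument in the spirit of Proposition \ref{strategy_stealing}. First I would restate the goal: a 2-colouring of $B$ is proper for $\mathcal F$ precisely when each of the two colour classes meets every winning set $F\in\mathcal F$ (a winning set is monochromatic exactly when it avoids one of the classes), so it suffices to partition $B$ into two sets, each of which is a set of tiles meeting every winning set --- call such a set a \emph{transversal} of $\mathcal F$. Observe that a winning strategy for Breaker is, by the definition of a Maker-Breaker game, exactly a strategy guaranteeing that the set of tiles claimed by its user is a transversal of $\mathcal F$, whatever the opponent does.

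The key step is that Maker, being the first player, can steal such a strategy $\tau$: Maker makes an arbitrary opening move and thereafter plays as though he were the second player following $\tau$, interpreting Breaker's moves as the first player's moves; whenever $\tau$ would direct him to a tile he has already taken --- which can only be one of his own earlier arbitrary moves --- he simply makes another arbitrary move. Since occupying additional tiles can never cause a set of tiles to stop meeting a winning set, the collection of tiles Maker ends up with contains the collection a genuine $\tau$-following second player would obtain, and hence is itself a transversal of $\mathcal F$. I would then consider the single play of $\mathcal G$ in which Maker uses this stolen strategy while Breaker plays $\tau$ itself: this play is well defined, and both strategies deliver their guarantees, so that if $X$ denotes the set of tiles Maker claims over this play, then $X$ is a transversal of $\mathcal F$, and $B\setminus X$ contains the set of tiles Breaker claims and is therefore a transversal too. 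Colouring $X$ with one colour and $B\setminus X$ with the other is then a proper 2-colouring of $(B,\mathcal F)$, since every winning set meets both classes and so is not monochromatic.

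The main obstacle is making the strategy-stealing bookkeeping airtight: one must check that Maker's simulated copy of $\tau$ never instructs him to take a tile already claimed by Breaker --- it cannot, since in the simulation those tiles belong to the ``first player'', whom $\tau$ by definition never occupies --- so that the only collisions are with Maker's own harmless dummy tiles, and that the simulated game remains a legal play of $\mathcal G$ throughout. It is also worth noting in passing that the hypothesis rules out singleton winning sets (Maker could seize such a vertex on move one, so Breaker could not have a winning strategy), which is in any case consistent with, indeed forced by, the conclusion. Finally, I would remark that openness of $\mathcal G$ for Breaker is not really needed for this argument; it only ensures, as a bonus, that the transversals in question are already realised after finitely many moves.
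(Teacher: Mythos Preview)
Your proof is correct and follows essentially the same approach as the paper: both have Breaker play his winning strategy $\tau$ while Maker steals $\tau$ after an arbitrary opening move, then colour the board according to who claimed each vertex, assigning the unclaimed vertices to Breaker's colour class. Your side remarks---that the openness hypothesis for Breaker is not really used, and that singleton winning sets are automatically excluded---are correct observations that the paper does not make explicit.
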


\subsection{Biased Infinite Hex may not be a draw}\label{biased_inf_hex}

Informally, our intuition of Infinite Hex is that, since the infinite board is ``too large", the advantage of placing the first stone is ``too diluted", and so an initial position with finitely many cells already marked by Red\footnote{Or even infinitely many, if placed ``badly" enough.} should not give a consistent advantage to Red, compared to the initially empty board, so that the game should result in a draw, as we saw in section \ref{hex_empty_draw}.

However, adaptations of the mirroring strategy $\mu$ do not seem to be sufficient even in the case of just one extra stone for Red.\footnote{Of course, games of Infinite Hex biased in favour of the second player present essentially the same problem.}

\medskip

It seems reasonable to claim that no pairing strategy can be formulated.
A possible proof could model Biased Infinite Hex as a Maker-Breaker game played on $(\widetilde{H}_{adv},\mathcal{C}_1)$, where $\widetilde{H}_{adv}$ is the infinite board without the cells given to Red as advantage, and $\mathcal{C}_1$ represents the winning condition for Red, the first player, and then assume for a contradiction that Blue has a drawing strategy, which would imply the proper 2-colourability of $(\widetilde{H},\mathcal{C}_1)$ by Proposition \ref{mb_2_colouring}; it is not clear how to get a contradiction from here.

\medskip

The non-openness of the games of Biased Infinite Hex makes such problem less tractable.

We then formulate the following.

\begin{conjecture}
A game of Infinite Hex with exactly one tile of the board assigned to Red at the initial position, and none to Blue, is a win for Red.
\end{conjecture}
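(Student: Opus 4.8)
The plan is to exhibit an explicit winning strategy for Red. Since Biased Infinite Hex is not open for either player, the game‑value theory of Section~\ref{sec_game_values} does not apply, and the strategy‑stealing argument (Proposition~\ref{strategy_stealing}) only shows that Blue has no winning strategy --- in the presence of draws this is consistent with a draw, so it does not suffice on its own. Without loss of generality place Red's stone at the origin $h_0$; by Remark~\ref{rmk_finite_hex}.\ref{extra_stone} (and its evident analogue on the infinite board) an extra Red stone never helps Blue, so it is enough to find \emph{any} Red strategy that forces the construction of a path winning in the sense of Definition~\ref{winning_path}.

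The core idea is to use the extra stone purely for \emph{tempo} and to build a robust bi‑infinite connection out of standard Hex connection templates. First I would fix a bi‑infinite ``skeleton'' path $R$ through $h_0$ whose two ends escape to infinity with both coordinates tending to $+\infty$, respectively $-\infty$ --- which, by Definition~\ref{winning_path}, already makes $R$ winning with respect to every centre --- and then thicken $R$ into a chain of overlapping \emph{bridges} (and, where the local geometry forces it, stronger templates), so that Blue's capturing any single cell of the carrier can be repaired by one Red reply. Red's strategy is then: on each turn, if Blue has just played inside the carrier of some template of the chain, respond with that template's prescribed repair; otherwise spend the turn extending the already‑secured portion of the chain by one more template, alternating between the NE and the SW directions. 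The free stone at $h_0$ is exactly what supplies the tempo so that Red is never caught having to leave a freshly created template unrepaired. The key lemma to prove is that this strategy keeps the chain Red‑connected forever, so that in the limit Red owns a connected bi‑infinite subpath which, by construction, is winning with respect to every centre.

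The main obstacle is that Blue is not passive. Blue may attack several templates before Red has secured them and, more seriously, may try to race a blocking wall across Red's intended corridor faster than Red advances; one must show the template chain can always be locally re‑routed around such interference while staying inside the prescribed quadrants, and here the Jordan‑curve obstruction behind Proposition~\ref{not_both_win_inf_hex} --- Blue can never actually complete a blue winning path while Red holds any stone --- should be the lever that prevents Blue from permanently severing the corridor. At bottom, however, the real difficulty is that even \emph{finite} Hex has no known explicit first‑player winning strategy: the finite result is proved non‑constructively from strategy‑stealing together with the no‑tie property (Corollary~\ref{no_tie}), so producing \emph{any} concrete winning strategy, finite or infinite, is itself the crux.

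A softer, indirect route would avoid constructing Red's strategy: model Biased Infinite Hex as a Maker--Breaker game on $(\widetilde{H}_{adv},\mathcal{C}_1)$ as in Section~\ref{biased_inf_hex}, assume for contradiction that Blue has a drawing strategy, and try to extract from it a proper $2$‑colouring of $(\widetilde{H},\mathcal{C}_1)$ in the style of Proposition~\ref{mb_2_colouring}, deriving a contradiction with the presence of the extra Red stone; one would then need a determinacy argument to convert ``Blue has neither a drawing nor a winning strategy'' into ``Red wins''. The sticking point, flagged already in the text, is that Proposition~\ref{mb_2_colouring} requires the game to be open for Breaker, which fails here, so a genuinely new argument for the non‑open setting --- or a direct combinatorial contradiction bypassing $2$‑colourability, together with a proof that this particular game with draws is determined --- would be required. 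I expect this non‑openness to be the decisive difficulty along either route.
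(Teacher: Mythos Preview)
The statement you are attempting to prove is labelled in the paper as a \emph{Conjecture}, not a theorem: the paper offers no proof whatsoever, and in the surrounding text explicitly says that ``it is not clear how to get a contradiction from here'' before formulating the conjecture. There is therefore nothing in the paper to compare your attempt against; the statement is presented as open.

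Your proposal is not a proof either, and to your credit you seem to recognise this: it is a research outline with the hard steps left as obstacles. The direct approach hinges on the unproved ``key lemma'' that the bridge/template chain remains Red-connected against arbitrary Blue interference; your own parenthetical remark that even \emph{finite} Hex has no known explicit first-player winning strategy is precisely the reason this lemma is currently out of reach. The appeal to Proposition~\ref{not_both_win_inf_hex} as a ``lever'' does not do the work you want: that proposition only says Red and Blue cannot \emph{both} win, which is perfectly compatible with Blue perpetually severing every attempted Red corridor and forcing a draw. The indirect Maker--Breaker route you sketch is exactly the one the paper itself considers and abandons for the reason you name---Proposition~\ref{mb_2_colouring} requires openness for Breaker, which fails here---and you do not supply the ``genuinely new argument'' needed to replace it, nor the determinacy result that would be required on top of it. In short, both routes you propose terminate at acknowledged gaps, so the conjecture remains a conjecture.
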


As an apparently easier problem, we also propose the following.

\begin{conjecture}
A game of Infinite Hex with initial position as in figure \ref{hor_hex} is a win for Red.
\end{conjecture}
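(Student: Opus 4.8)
The plan is to hand Red an explicit strategy and argue that, against any Blue play, it constructs a winning Red path in the sense of Definition~\ref{winning_path}; since a biased game of Infinite Hex need not be open for Red, there is no shortcut through game values, and since Proposition~\ref{not_both_win_inf_hex} already forbids both players winning, it is enough to produce a Red strategy that always builds a bi-infinite Red chain whose positive half eventually lies in the interior of every NE-quadrant and whose negative half eventually lies in the interior of every SW-quadrant. I will lean on the structural feature of the position in figure~\ref{hor_hex} that Red starts in possession of a connected infinite Red chain running in the East--West direction --- an infinite horizontal line of stones, call it $L$. The whole problem then reduces to showing that from the fixed, unbreakable ``spine'' $L$ Red can force two escapes, one climbing to NE-infinity and one descending to SW-infinity, each permanently connected to $L$.

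For the escapes I would use the standard Hex \emph{bridge}, the two-cell double-threat connection. In advance Red designates a staircase of cells heading NE away from some tile of $L$, arranged so that consecutive designated stones are joined by a bridge and so that the carriers of distinct bridges are pairwise disjoint and mutually independent in the hexagonal lattice; symmetrically a staircase heading SW from another tile of $L$. Red then plays reactively: whenever Blue occupies a cell in a live bridge of one staircase, Red immediately answers in that bridge's partner cell (the classical bridge defence), restoring the connection; on every other Blue move Red instead advances, alternating turns between the two staircases, by occupying the next designated cell. Because the bridges are pairwise independent, a single Blue stone endangers at most one bridge, Red repairs it on the very next move, and so every connection, once made, survives forever; in the limit Red owns a connected bi-infinite chain contained in $L$ together with the two staircases.

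Two things then remain to check. First, geometry: one verifies in the coordinate system of figure~\ref{fig_axes_coords} that a bridge-staircase of the chosen slope genuinely has both coordinates tending to $+\infty$ (resp.\ $-\infty$), so the resulting chain is winning with respect to every choice of origin, and that the two staircases together with $L$ do not obstruct one another --- routine but careful lattice bookkeeping. Second, and more delicate, \emph{progress}: one must rule out Red spending cofinally many turns repairing and never advancing. A counting argument should suffice --- in any initial segment of the play the number of Red repair-moves is at most the number of Blue moves, leaving a cofinal set of turns on which Red advances a staircase, and alternation makes both staircases advance infinitely often. One also checks that Blue cannot pre-build a blockade across a corridor: at each finite stage Blue has placed only finitely many stones, the corridor is chosen with lateral slack, and a bridge whose partner cell Blue took on an earlier turn is simply never created, Red re-routing locally around it at a cost of finitely many extra steps that does not disturb the limiting directions.

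The main obstacle is precisely the progress-against-harassment step: making ``Red re-routes locally'' rigorous while keeping the re-routed staircase monotone in both coordinates, and proving that no adaptive Blue strategy can force an unbounded accumulated detour. This is essentially the same difficulty the author flags for the one-tile conjecture, and the reason figure~\ref{hor_hex} is the easier target is exactly that the pre-existing infinite chain $L$ gives Red a free, permanent spine to launch from, so Red never has to manufacture long-range connectivity from nothing --- which is what defeats naive pairing strategies when Red has only a single extra stone.
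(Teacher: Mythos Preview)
This statement is a \emph{Conjecture} in the paper, not a theorem: there is no proof in the paper to compare your proposal against. The author's only comment is that from the horizontal line Red can build arbitrarily long finite ``prongs'', and that this is ``short of a definitive answer.'' Your plan is essentially that remark fleshed out into a strategy sketch, and you correctly locate the crux in the progress-against-harassment step --- but your sketch does not close it, and the gap is genuine.

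Two concrete points. First, the counting argument as stated is empty: ``the number of Red repair-moves is at most the number of Blue moves'' is automatic and does not force a cofinal set of advances. The bound you actually want is that repairs are at most the number of bridges already created, hence at most the number of prior advances; that does give infinitely many advances, but only under the assumption that every Red move is either a repair of a live bridge or a clean advance along the pre-designated staircase. Second, that assumption fails: Blue is under no obligation to attack live bridges. Blue can simply play ahead of the NE tip, pre-occupying both carriers of the next intended bridge. Since Red is splitting attention between two staircases, Blue gets two stones per Red NE-advance --- exactly enough to kill one future NE bridge --- so Blue can, prima facie, block the planned extension every single time. Your response is ``re-route locally at a cost of finitely many extra steps'', but Blue knows the re-routing rule too and can block the re-routed bridges by the same mechanism; from any tip there are only boundedly many forward bridge options, and you have not shown that Blue cannot close them off at the available rate. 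Making this rigorous would require exhibiting a template in which Red's advancing front always carries strictly more independent threats than Blue can answer, and controlling the cumulative drift of the re-routes so that both coordinates still go to $\pm\infty$. That is precisely the content of the conjecture, and the paper leaves it open.
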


\begin{figure}[h]
\centering
\includegraphics[width=6cm]{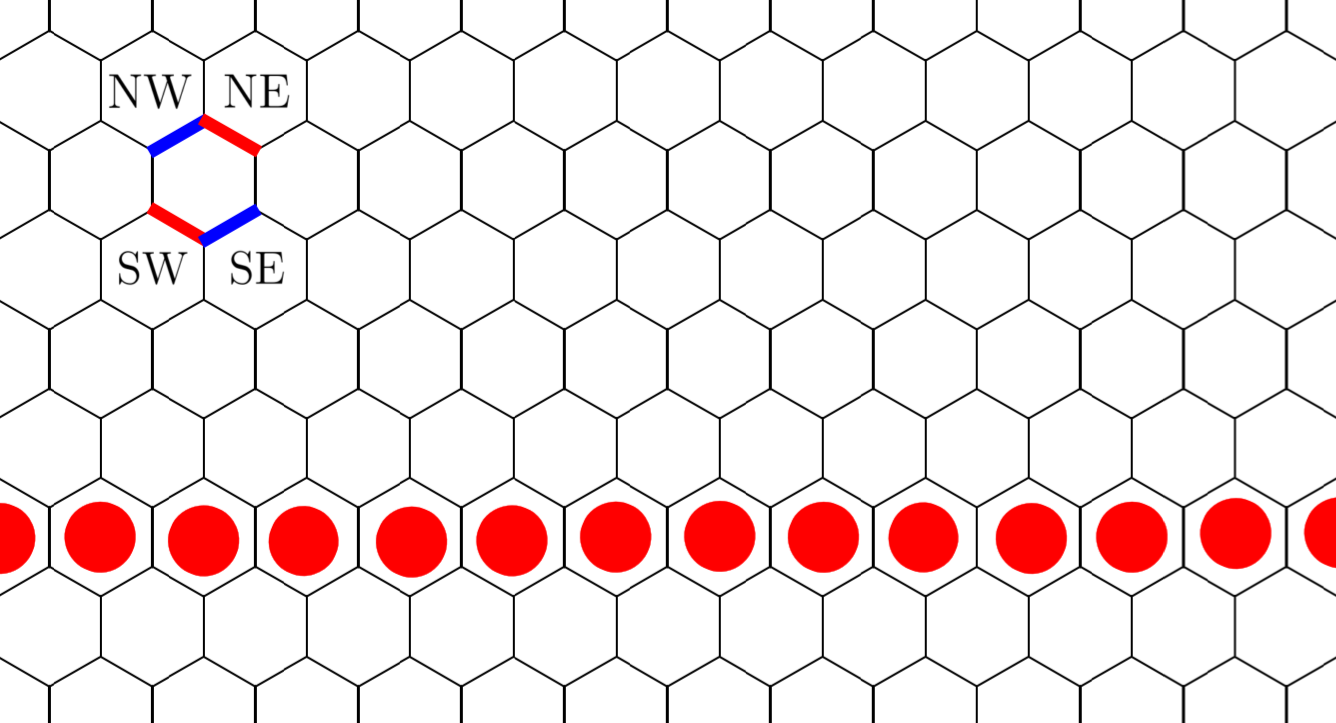}
\caption{Infinite East-West Red path.}
\label{hor_hex}
\end{figure}

It is not difficult to see that from such a position Red can construct arbitrarily long, but finite, ``prongs" that shoot out of the East-West infinite path; however, this is short of a definitive answer.



\subsection{Stone-placing games}\label{sec_stone_pl}

We will now prove that no infinite game values can be achieved in the following wider class of vertex-colouring games, that retain 
the property of extra moves being always advantageous for the player making them, which we first observed for finite Hex in Remark \ref{rmk_finite_hex}.\ref{extra_stone}.

\begin{definition}\label{def_stone_placing}
A \emph{stone-placing game} played on $(B,\mathcal{F},\mathcal{S})$, where both $(B,\mathcal{F})$ and $(B,\mathcal{S})$ are hypergraphs, is a game in which the two players alternatively choose previously unchosen elements of the common vertex set $B$; at the initial position, all the vertices of the board $B$ are unmarked.

The hyperedge sets $\mathcal{F},\mathcal{S}\subset\mathcal{P}(B) \setminus\emptyset$ represent the winning conditions of, respectively, the first and the second player; that is, the winner is the player who chooses all the vertices of some winning set specified by his respective winning condition first.

Thus, we impose that both $\mathcal{F}$ and $\mathcal{S}$ are closed under taking supersets, without loss of generality.
\end{definition}

\begin{remark}
Note that Definition \ref{def_stone_placing} does indeed generalise the other games defined previously in the current section \ref{sec_positional_games}.

Say that $\mathcal{G}$ is a stone-placing game played on $(B,\mathcal{F},\mathcal{S})$; if $\mathcal{F}=\mathcal{S}$, then $\mathcal{G}$ is a positional game.\footnote{This clarifies why positional games are also known as \emph{Maker-Maker games}.}

We can see that if $\mathcal{S}= \mathcal{F}'$, where
\[\mathcal{F}'=\{F'\in \mathcal{P}(B):\lvert F'\cap F\rvert\ge 1 {\rm ~for~all~} F\in \mathcal{F}\},\]
then $\mathcal{G}$ is a Maker-Breaker game in which Maker plays first; an analogous condition allows $\mathcal{G}$ to be a Maker-Breaker game with Breaker playing first.
\end{remark}

\begin{remark}\label{rmk_open_st_pl}
If $\mathcal{G}$ is a stone-placing game played on $(B,\mathcal{F},\mathcal{S})$ open for the first player, then
every win by the first player is essentially finite, and so for any $f\in\mathcal{F}$ there is some finite $f'\in\mathcal{F}$ such that $f'\subset f$.
Thus, we can say that $\mathcal{F}$ has a basis of finite sets; that is, there is $\{f_i\}_{i\in \mathcal{I}} \subset\mathcal{F}$ such that $f_i$ is finite for each $i\in \mathcal{I}$, which is not necessarily finite, and, for any $f\in \mathcal{F}$, there is $f_i \subset f$ for some $i\in \mathcal{I}$.

To express this more explicitly, say that a play $s$ is a win for the first player because he manages to mark all the vertices in some infinite winning set $f\in \mathcal{F}$, then there is a finite $f_i\subset f$, for some $i\in\mathcal{I}$, which is completely marked by the first player over $s$; since each vertex is marked at some finite stage of the play,\footnote{This is a consequence of plays having order type at most $\omega$.} then the whole of $f_i$ is marked by the first player after finitely many turns and $s$ is an essentially finite play, as required by the openness of $\mathcal{G}$.

Hence, if $\mathcal{G}$ is open for the first player, then we can regard the elements of the basis for $\mathcal{F}$ as the ``meaningful" winning sets, and we can thus assume that there are no ``redundant" infinite elements in $\mathcal{F}$, without essentially changing $\mathcal{G}$.
In particular, if all the elements of $\mathcal{F}$ are finite, then $\mathcal{G}$ is open for the first player.

If $\mathcal{G}$ is open for the second player, then $\mathcal{S}$ satisfies analogous conditions.

\medskip

Analogously, we see that if a Maker-Breaker game $\mathcal{G}$ played on $(B,\mathcal{F})$ is open for Maker, then $\mathcal{F}$ has a basis of finite sets.

Dually, if $\mathcal{G}$ is open for Breaker, then $\mathcal{F}'$ has a basis of finite sets, where $\mathcal{F}'$ is defined as above.\footnote{Recall the duality between Maker and Breaker discussed at page \pageref{duality}.}

In particular, if both $\mathcal{F}$ and its elements are finite, then $\mathcal{G}$ is open for both players.
\end{remark}

\medskip

\begin{remark}
We can generalise the mirroring strategy described in section \ref{hex_empty_draw} for Normal Infinite Hex, and at page \pageref{diss_asym_board} for finite asymmetric Hex boards, to give a condition that enables the second player\footnote{Note that an analogous result holds for the first player, who additionally makes an arbitrary initial move, as usual. We state the result for the second player because she is a priori at a disadvantage.} to force a draw.

We propose another way to see that argument; the pairing $\theta$ for the the mirroring strategy $\mu$ determines an involution\footnote{Given a pairing of the tiles in the Infinite Hex board, we need the Axiom of Choice to define such an involution.} of the infinite board $\theta:\widetilde{H} \rightarrow\widetilde{H}$;
note that $\theta$ has no fixed point, as each cell is paired with a different cell.

Take a winning path for Red and take its image under $\theta$; this will not necessarily be a winning path for Blue, exactly because the original tile pairing $\theta$ does not determine a rigid movement of the board; however, we have shown in section \ref{hex_empty_draw} that it will intersect Red's path.

Observe that the intersection consists only of paired cells; hence, if the intersection is finite, then it has even size.
\end{remark}

\begin{proposition}[Mirroring]\label{stone_pl_mirroring}
Let $\mathcal{G}$ be a stone-placing game played on $(B,\mathcal{F},\mathcal{S})$. Suppose that there is a fixed-point free involution $\theta$ of the board, that is a bijective function $\theta:B\rightarrow B$ such that $\theta^2(b)=b$ and $\theta(b)\neq b$ for all $b\in B$, and say that, for each $f\in\mathcal{F}$, we have $f\cap \theta[f] \neq \emptyset$.

Then, the second player in $\mathcal{G}$ can force a draw.
\end{proposition}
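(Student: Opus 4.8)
The plan is to have the second player adopt the \emph{mirroring strategy} induced by $\theta$: whenever the first player marks a vertex $b\in B$, the second player immediately responds by marking $\theta(b)$.

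First I would check that this strategy is well-defined, i.e.\ that $\theta(b)$ is always unmarked when the second player is called upon to play it. The key is the invariant that, after each of the second player's replies, the set of already-marked vertices is a union of $\theta$-pairs $\{x,\theta(x)\}$ (these are genuine two-element sets, since $\theta$ is fixed-point free), with one vertex of each such pair marked by the first player and the other by the second. This is proved by an easy induction on the number of completed turns: from the empty board it holds vacuously; and if it holds before the first player's move, then the first player must mark some vertex $c$ lying in a currently-unmarked pair, so $\theta(c)$ is still free and the second player can legally mark it, after which the marked set is again a union of $\theta$-pairs. A consequence I would record is that at \emph{every} stage of play — in particular immediately after a move by the first player, before the second player has replied — the first player has marked at most one vertex of each pair $\{x,\theta(x)\}$.

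Then I would argue by contradiction that, against the mirroring strategy, the first player can never win. Suppose that at some point the first player has marked every vertex of some winning set $f\in\mathcal{F}$. By hypothesis $f\cap\theta[f]\neq\emptyset$, and since $\theta$ is an involution, $x\in f\cap\theta[f]$ exactly when $x\in f$ and $\theta(x)\in f$; so there is a vertex $c$ with both $c\in f$ and $\theta(c)\in f$, and $c\neq\theta(c)$. Thus $f$ contains an entire $\theta$-pair $\{c,\theta(c)\}$, and the first player has marked both of its vertices — contradicting the consequence of the invariant above. Hence no play conforming to the mirroring strategy is ever won by the first player, so this strategy guarantees the second player at least a draw. (That the second player might additionally win some such plays is irrelevant to the claim.)

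I do not expect a serious obstacle here: the substantive points are just the well-definedness invariant and the observation that $f\cap\theta[f]\neq\emptyset$ forces a whole $\theta$-pair inside $f$, both of which are immediate. The one place to be slightly careful is the timing issue — the first player could in principle complete $f$ on their own turn, before the second player's reply — but this is exactly why I would phrase the "at most one vertex per $\theta$-pair" consequence so that it holds after every move by the first player, not merely after completed turns.
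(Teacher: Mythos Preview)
Your proposal is correct and follows essentially the same approach as the paper: define the mirroring strategy, check it is well-defined via the pairing invariant, and observe that $f\cap\theta[f]\neq\emptyset$ forces an entire $\theta$-pair $\{c,\theta(c)\}\subset f$, which the first player can never fully mark. If anything, your treatment of well-definedness and of the timing issue is more explicit than the paper's.
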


\begin{proof}
Let the second player play as follows; at each turn, when the first player marks some vertex $v\in B$, then the second player chooses $\theta(v)\neq v$ immediately after. Note that if $\theta(v)$ was already chosen by the first player, then $\theta(\theta(v))=v$ would have been previously taken by the second player, leading to a contradiction; thus, this strategy is well-defined.

Say that $v\in f\cap \theta[f]$. Since $v\in f$, then $\theta(v)\in \theta[f]$; moreover, since $v\in \theta[f]$, then $\theta(v)\in \theta^2[f] =f$.
Thus, $v$ and $\theta(v)$ are distinct elements of $f\cap \theta[f]$; we deduce that $f\cap \theta[f]$, if finite, has even size.

Following this strategy, the second player prevents the first player from choosing all the vertices in some hyperedge $f\in \mathcal{F}$; since $\lvert f\cap \theta[f]\rvert \ge 2$, then the second player will always be able to counteract the moves of the first player, so that the first player will not be able to mark all the vertices in $f\cap \theta[f]$.

Therefore, the second player can force a draw.
\end{proof}

\begin{remark}
Observe that a board involution $\theta$ as in Proposition \ref{stone_pl_mirroring} specifies a vertex pairing that can be used to describe a proper 2-colouring of $(B,\mathcal{F})$, as we had seen in Proposition \ref{mb_2_colouring}.
\end{remark}

\begin{remark}
Note that the Strategy-Stealing argument\footnote{See Proposition \ref{strategy_stealing}.} does not generally apply to this class of games, in fact, there is not always an automorphism of the vertices in $B$ such that each winning set in $\mathcal{S}$ is sent to some winning set in $\mathcal{F}$.

However, we can generalise the symmetry condition of the finite Hex board highlighted in Remark \ref{rmk_finite_hex}.\ref{symmetry_board} to construct the Strategy-stealing argument for stone-placing games which are not open; we provide an argument in appendix \ref{app_str_steal_st_pl}.
\end{remark}

\subsection{Impossibility of infinite game values}

\subsubsection{Finite game values in Infinite Hex}\label{finite_hex_values}

Firstly, we show that all finite ordinals can be realised as the game values of some Infinite Hex positions through the use of \emph{virtual links}.\footnote{These were first called \emph{angle-positions} by \cite{hein42}, and then received this name by Claude Berge in 1977, according to \cite[\textsection 7]{hayward06}; they are now part of the standard terminology used by Hex players \cite{hexwiki}.}

The position in figure \ref{fig_bridge_0} is a \emph{bridge}, as defined by \cite[\textsection 12.3]{browne00}, which ensures that Red can join the two marked cells by choosing either of the two cells separating them. Blue can threaten this construction by occupying one of these two cells, and Red can simply respond by picking the remaining one, thus ``realising" the virtual connection.

\begin{figure}[h]
\centering
\begin{subfigure}{.32\textwidth}
  \centering
  \includegraphics[width=.9\linewidth]{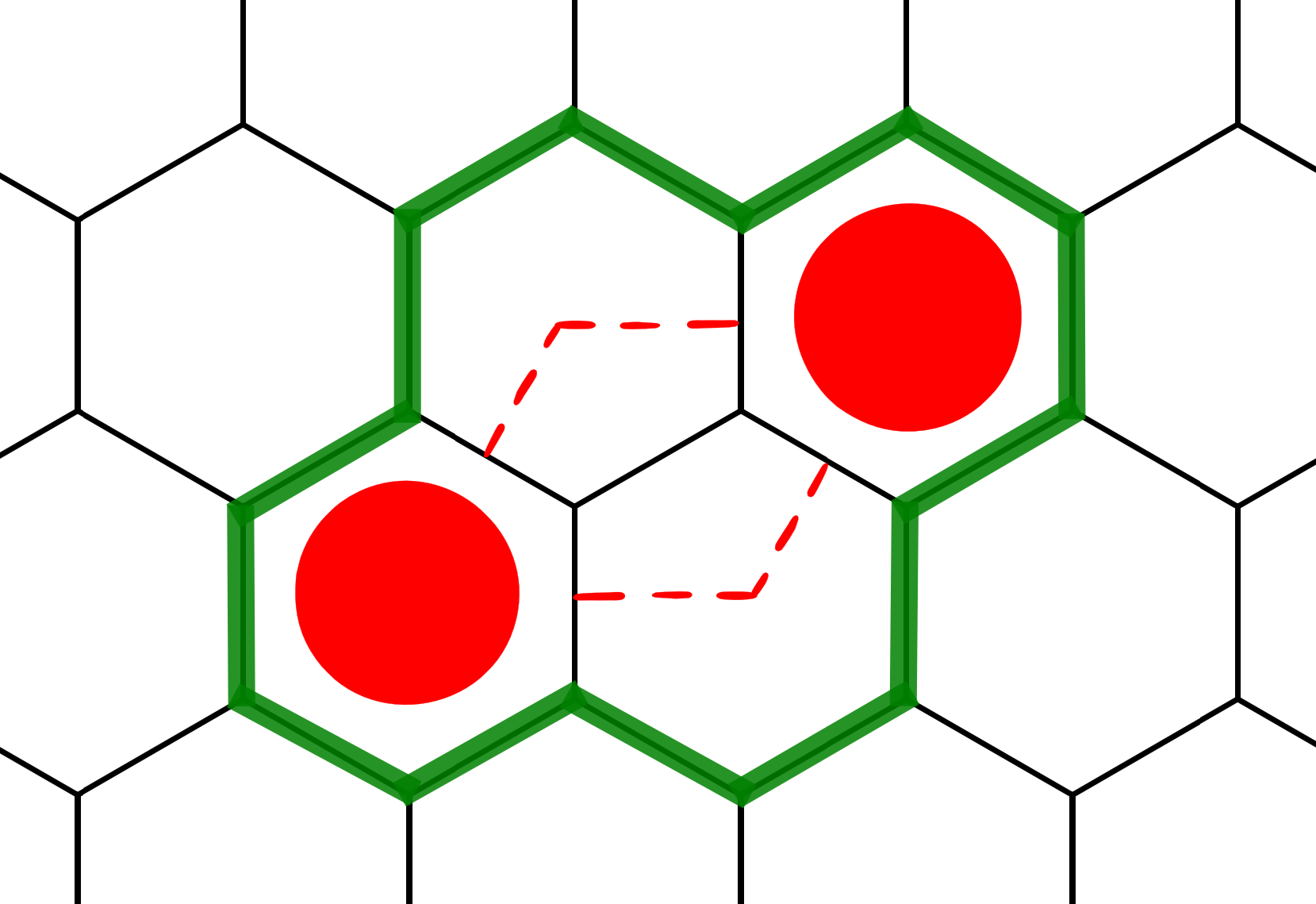}
  \caption{Red bridge}
  \label{fig_bridge_0}
\end{subfigure}
\begin{subfigure}{.32\textwidth}
  \centering
  \includegraphics[width=.9\linewidth]{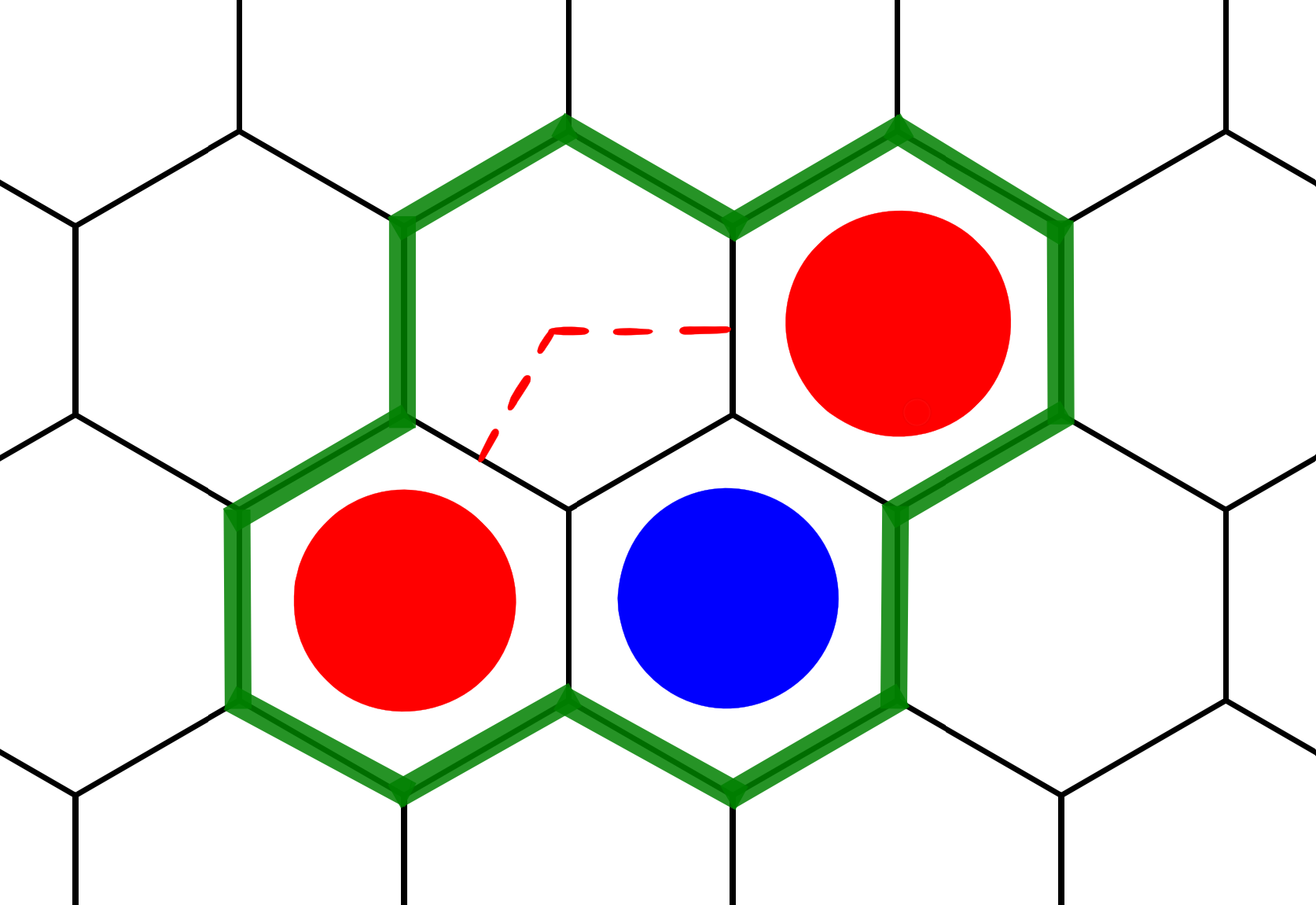}
  \caption{Threat by Blue}
  \label{fig_bridge_1}
\end{subfigure}
\begin{subfigure}{.32\textwidth}
  \centering
  \includegraphics[width=.9\linewidth]{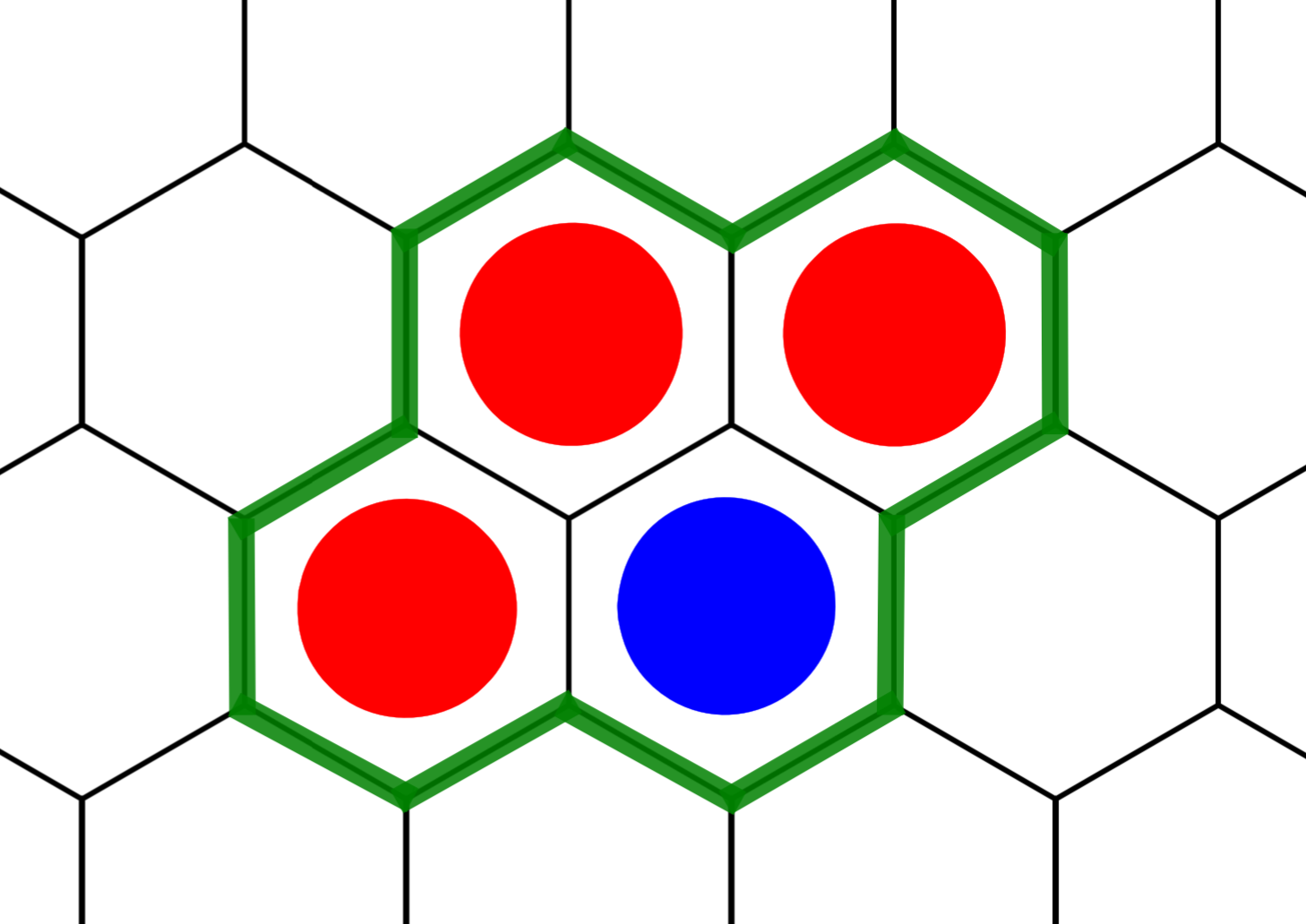}
  \caption{Response by Red}
  \label{fig_bridge_2}
\end{subfigure}
\caption{The realisation of a virtual link by Red.}
\label{fig_bridge}
\end{figure}


Joining two half-infinite lines by finitely many interlinked bridges allows to construct positions of any finite game value.

We present in figure \ref{fig_interlinked_bridges} a position with value 5 for Red, two half-infinite linear components joined by 5 bridges, in which the virtual connections are highlighted.

\begin{figure}[h]
\centering
\includegraphics[width=8cm]{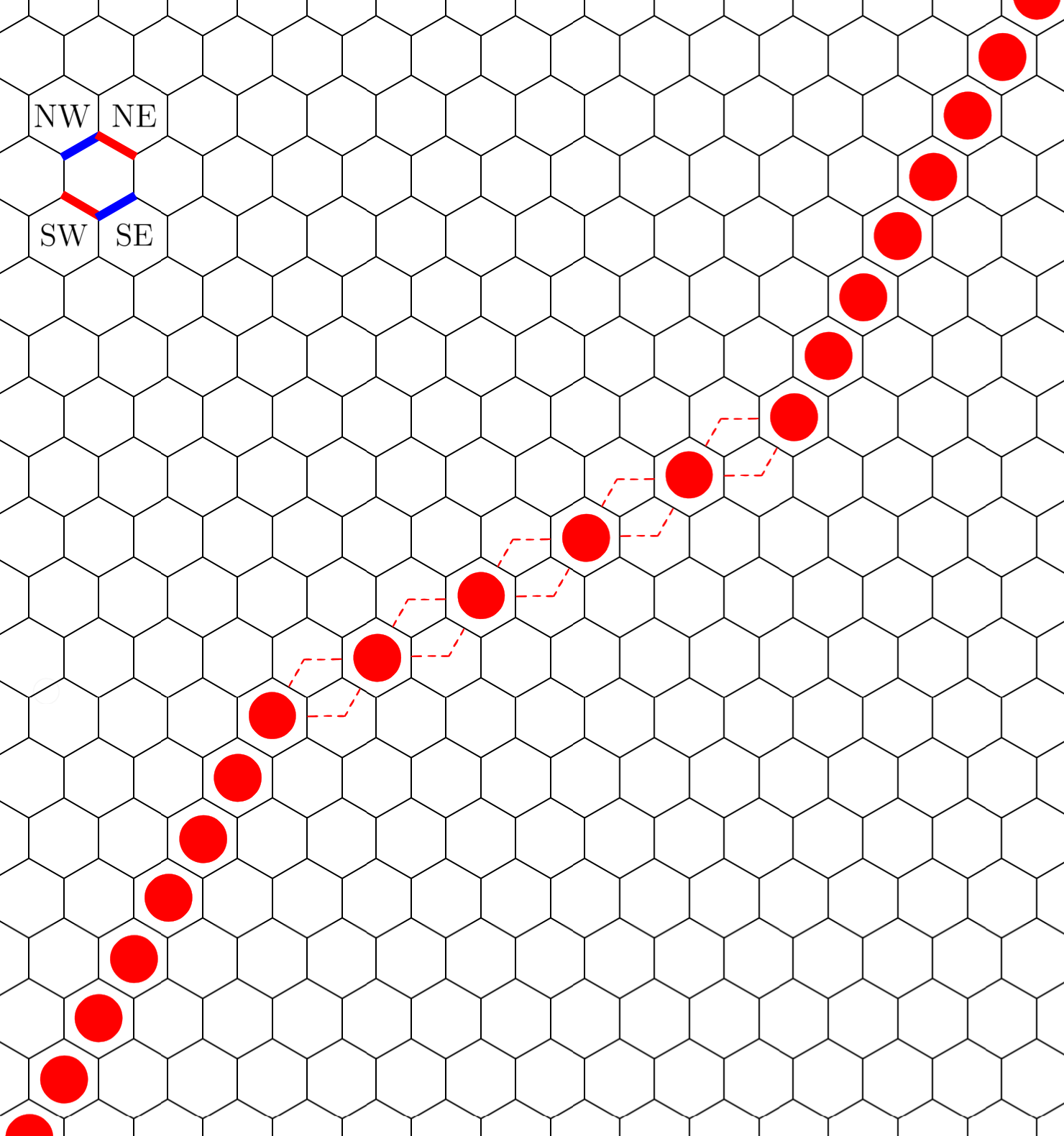}
\caption{Infinite Hex position with value 5 for Red.}
\label{fig_interlinked_bridges}
\end{figure}

Note that this position has infinitely many pieces already placed.
More in general, since the winning player places only finitely many stones when starting from a position with defined game value, then it follows from the definition of winning condition that no position with defined game value can have only finitely many stones already placed.

\subsubsection{Infinite game values}

It can be reasonable to expect that some well-constructed open games of Infinite Hex can achieve an infinite game value; however, we will show that this is not possible by proving that Hex, as all stone-placing games, is an \emph{essentially local} game.

\begin{definition}
A \emph{dead region} for some player in a stone-placing game, which is played on $(B,\mathcal{F},\mathcal{S})$ and has defined game value $\alpha$ for the player's opponent,\footnote{So that the game is clearly open for the player's opponent.} is some subset of the board $R\subset B$ such that, if all the vertices in $R$ are assigned to the (losing) player, then the game with the resulting initial position, which has the same turn indicator, still has value $\alpha$ for the (winning) player's opponent.\footnote{This definition is inspired by the concept of dead cells, which are usually defined in a more restrictive sense as cells that cannot influence the outcome of the game from a given position; see for instance \cite[3.1]{hayward06}.}
\end{definition}

In other words, modifying the initial position of the game by assigning the cells of a dead region as advantage to the losing player does not substantially alter the game; even though the strategy of the winning player may change, the game value for such player does not vary.

\medskip

\begin{definition}
A stone-placing game, which has defined game value for some player, is \emph{essentially local} for that player if there is a cofinite dead region of the board for that player's opponent.
\end{definition}

\begin{remark}
An analogous definition can be made for essentially local positional games.
Also, it is obvious that all finite games are essentially local.
\end{remark}

For instance, see that all the empty tiles in figure \ref{fig_interlinked_bridges}, except for the 10 empty tiles which allow Red to realise the virtual links, belong to a dead region for Blue, hence that is the initial position of an essentially local game.

\bigskip

We aim to prove the following.

\begin{theorem}\label{no_inf_hex}
Any game of Infinite Hex with defined value for some player is essentially local for that player; thus, it has finite game value.
\end{theorem}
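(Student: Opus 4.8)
The plan is to show that any game of Infinite Hex with defined game value $\alpha$ for some player, say Red, admits a cofinite dead region for Blue; the final clause (finite game value) then follows because if all but finitely many cells are conceded to Blue, the game value is unchanged, yet the remaining game is essentially a finite board game, which has finite value, so $\alpha$ itself must be finite. The key observation to exploit is the one already recorded in Section \ref{finite_hex_values}: any position with defined game value has infinitely many stones already placed, and the winning player places only finitely many additional stones along a value-reducing play. So from the initial position $p$ with value $\alpha$ for Red, fix Red's value-reducing strategy $\sigma$; the set of cells Red might ever mark while following $\sigma$ against \emph{some} play by Blue is, a priori, potentially large, but the point is that Red's \emph{winning path} is already essentially determined by $p$ together with finitely many ``repair'' moves, exactly as in the bridge/virtual-link picture of Figure \ref{fig_interlinked_bridges}.

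Concretely, first I would argue that there is a finite set $W\subseteq B$ of cells such that Red, starting from $p$, can guarantee to complete a winning path using only cells in $W$ (this is where openness is used: the win is essentially finite, so it is witnessed by a finite sub-path $f_i$ of some winning set, and the value-reducing strategy commits Red to driving the value down to $0$ in finitely many of Red's moves, all of which can be taken inside a finite region $W$ determined by $\sigma$ and the finitely many Blue replies that matter). Then I would let $R = B \setminus W$ and claim $R$ is a dead region for Blue: if we hand Blue every cell of $R$ at the start, Red can still follow essentially the same value-reducing plan — the only subtlety is that Blue now ``already owns'' cells outside $W$, but since those cells were never going to be used by Red's winning path and (by Remark \ref{rmk_finite_hex}.\ref{extra_stone}, in its Maker-side form) extra stones never help the opponent obstruct a Maker win that lives inside $W$ — wait, more carefully: extra Blue stones outside $W$ cannot block a Red path contained in $W$, since disjoint cell sets cannot intersect. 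Hence Red's strategy on $W$ is unaffected and the game value is still $\alpha$. Finally, with $R$ cofinite and dead, the residual game played on the finite board $W$ (with the induced winning sets) has finite game value by the results of Section \ref{sec_game_values} on finite games, so $\alpha < \omega$.

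The main obstacle I anticipate is making the finite set $W$ genuinely well-defined: Red's value-reducing strategy $\sigma$ responds to \emph{all} Blue moves, including Blue moves very far from any relevant action, so one must argue that only finitely many of Blue's moves can actually affect which cells Red needs, and that the ``relevant'' Blue moves all occur within a bounded region. The right way to handle this is to induct on $\alpha$: a value-$(\beta+1)$ position for Red (Red to move) collapses after one Red move to a value-$\beta$ position, whose witness region $W_\beta$ is finite by induction, and Red's one move lies in a finite region; a value-$\gamma$ position for Red with $\gamma$ limit has Blue to move, and for \emph{each} Blue reply the resulting position has value $<\gamma$ with a finite witness — but there are countably many Blue replies, so one must argue that only finitely many distinct witness regions arise, or rather that Red need only prepare against the finitely many Blue moves inside the already-placed configuration's ``active zone.'' I expect the cleanest route is to prove directly, by induction on $\alpha$, the sharper statement: \emph{if $p$ has value $\alpha$ for Red then $\alpha<\omega$ and there is a finite $W$ containing all cells Red marks under the value-reducing strategy against any Blue play}, so that the limit case never actually occurs — which is precisely the content of essential locality and explains why the omega one is at most $\omega$.
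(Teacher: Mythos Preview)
Your outline matches the paper's route: represent Infinite Hex as a stone-placing game and show that any such game with defined value is essentially local, hence has finite value. You also correctly isolate the obstacle --- Blue has infinitely many possible replies, so the union of the finite witness regions produced by the inductive hypothesis is a priori infinite --- but you do not supply the idea that resolves it, and your closing line (``so that the limit case never actually occurs'') is not an argument but a restatement of what must be proved.

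The missing mechanism is the \emph{phantom move}. The paper's Lemma~\ref{mb_lemma} proves, by ordinary induction on the finite value $n$, that a finite witness region $D$ exists. When Closed (Blue) is to move at value $n+1$, the paper does \emph{not} union the witness regions over all of Blue's infinitely many replies. Instead it fixes one particular Blue move $w_0$, obtains the finite region $D_0$ for the resulting value-$\le n$ position, and then only considers the finitely many Blue alternatives $w_1,\dots,w_k$ that lie inside $D_0$, obtaining regions $D_1,\dots,D_k$; the answer is $D=\bigcup_i D_i$. If Blue opens with any $w'\notin D_0$, Red simply \emph{pretends} Blue played $w_0$ and follows the $D_0$-strategy: since Red's eventual winning set lies entirely inside $D_0$, the real Blue stone at $w'$ cannot obstruct it. This phantom-move trick is precisely how Blue's infinitely many options collapse to finitely many relevant ones, and it is needed already in the finite-value induction --- your successor step when Blue is to move has the same infinite-branching problem and is equally incomplete without it.

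Lemma~\ref{no_inf_mb} then reuses the same trick to exclude value $\omega$: pick one Blue opening $w$, reach a finite value $N$, get the finite $D$ from Lemma~\ref{mb_lemma}; for any other Blue opening outside $D$, Red phantom-moves to $w$ and still wins in at most $N$ moves, so every Blue opening leads to value $\le N$, contradicting value $\omega$. Without this idea your induction at the limit step has no content; the gesture toward ``only finitely many distinct witness regions arise'' or an ``active zone'' is the right instinct, but the argument that makes it true is exactly the phantom move, and you have not supplied it.
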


Observe that a game $\mathcal{H}$ of Infinite Hex can be represented as a stone-placing game.

Suppose that Red is the first player to move in $\mathcal{H}$ and say that $(\widetilde{H},\widetilde{\mathcal{R}})$ and $(\widetilde{H},\widetilde{\mathcal{B}})$ are hypergraphs in which the vertex set $\widetilde{H}$ is in bijection with the board cells unchosen at the initial position of $\mathcal{H}$, and the hyperedges in $\widetilde{\mathcal{R}},\widetilde{\mathcal{B}} \subset\mathcal{P}(\widetilde{H})$ represent all the possible winning paths for, respectively, Red and Blue.
Note that the vertices of all hyperedges in $\widetilde{\mathcal{R}}$ and $\widetilde{\mathcal{B}}$ are unmarked at the initial position of $\mathcal{H}$.

We can see that $\mathcal{H}$ can be interpreted as a stone-placing game played on $(\widetilde{H},\widetilde{\mathcal{R}},\widetilde{\mathcal{B}})$, where the first player is Red and the second player is Blue.

\bigskip

It is now clear that Theorem \ref{no_inf_hex} is a consequence of the following more general result.

\begin{theorem}\label{st_pl_thm_ess_loc}
Any stone-placing game with defined value for some player is essentially local for that player; thus, it has finite game value.
\end{theorem}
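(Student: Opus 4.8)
The plan is to establish \emph{essential locality} directly by transfinite induction on the game value; finiteness of the value will then be automatic. Assume without loss of generality that $\mathcal{G}$ is played on $(B,\mathcal{F},\mathcal{S})$ and is open for the first player $O$ (the case of the second player is symmetric), and that the initial position $v_0$ has value $\alpha$ for $O$; by Remark~\ref{rmk_open_st_pl} we may take every set in $\mathcal{F}$ to be finite. I first record the monotonicity built into the stone-placing framework: since $\mathcal{F}$ and $\mathcal{S}$ are closed under supersets, giving extra cells to $O$ can only weakly decrease the value for $O$, while giving extra cells to the opponent $C$ can only weakly increase it or make it undefined. Consequently, if $R\subseteq B$ is any set and the game obtained by pre-assigning all of $R$ to $C$ still has a value for $O$, that value is $\ge\alpha$; so to certify $R$ as a dead region for $C$ it is enough to exhibit, after $R$ has been handed to $C$, a winning strategy for $O$ realising value $\alpha$. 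Moreover, if such an $R$ is cofinite, the game after handing $R$ to $C$ is played on a finite board, hence is a finite game and has finite value; this is how ``thus it has finite game value'' falls out at the end.

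The engine of the proof is the following strengthened inductive claim: for every position $p$ of value $\alpha$ for $O$ there are a finite set $D_p\subseteq B$ and a value-reducing winning strategy $\sigma_p$ for $O$ from $p$ such that $\sigma_p$ marks only cells of $D_p$ and $\sigma_p(q)$ depends only on the restriction of the position $q$ to $D_p$; informally, $O$ plays inside $D_p$ and ignores $C$'s moves made outside it. Granting this for $p=v_0$, set $R=B\setminus D_{v_0}$: after $R$ is handed to $C$, every cell outside $D_{v_0}$ is already taken, so all legal moves lie in $D_{v_0}$, the strategy $\sigma_{v_0}$ is still a legal value-reducing winning strategy, and by the monotonicity remark the value of this finite game is exactly $\alpha$. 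Hence $R$ is a cofinite dead region for $C$ and $\alpha<\omega$, proving the theorem.

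It remains to prove the inductive claim, by strong induction on $\alpha$. The base case $\alpha=0$ is trivial, with $D_p=\emptyset$. If $\alpha=\beta+1$ and $O$ is to move at $p$, pick a cell $c^\ast$ whose marking by $O$ reaches a position $p'$ of value $\beta$, apply the inductive hypothesis to $p'$, and take $D_p=D_{p'}\cup\{c^\ast\}$, prepending the move $c^\ast$ to $\sigma_{p'}$. If instead $C$ is to move at $p$, then for each legal $C$-move $c$ the position $p+c_C$ has $O$ to move and value at most $\alpha$; an $O$-to-move position never carries a limit value, so after $O$'s reply the play stands, in every line, at a $C$-to-move position of strictly smaller value, to which the inductive hypothesis applies. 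The genuine difficulty is that $C$ may have infinitely many legal moves at $p$, so even though each resulting branch is controlled on a finite region, these regions might collectively be infinite. The plan here is to invoke the stone-placing property to show that only finitely many of $C$'s moves matter: a $C$-move on a cell lying in no set of $\mathcal{F}$ still completable by $O$ from $p$ and in no set of $\mathcal{S}$ still completable by $C$ is a genuine null move, changing neither the value nor the relevant structure, so $O$ may answer all such moves in a uniform way, reducing them to a single application of the inductive hypothesis on a fixed finite region; the remaining ``live'' cells are finite in number precisely because $\mathcal{G}$ is open for $O$, so from $p$, $O$ forces completion of a finite winning set in boundedly many moves along each branch, capping how many cells can ever become live. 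Taking $D_p$ to be the union of the finitely many working regions thus obtained gives a finite set, completing the induction.

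I expect this last step — making rigorous that ``only finitely many cells are live'' and that $O$ may safely ignore the rest — to be the crux of the whole argument, and the place where closure of $\mathcal{F}$ and $\mathcal{S}$ under supersets (equivalently, the fact that an extra stone never hurts the player who placed it) is indispensable: it is exactly here that the reasoning breaks for games in which a single move by the losing player can simultaneously obstruct infinitely many of the winner's threats in a way the winner cannot route around, which is what separates Infinite Hex, and stone-placing games generally, from Infinite Draughts. The decisive special case is $\alpha=\omega$ with $C$ to move, where the positions $p+c_C$ would be forced to have finite but unbounded values, while the relevance argument shows all but finitely many of them equal the minimal null-move value — a contradiction that, run through the induction, is what ultimately forbids infinite game values.
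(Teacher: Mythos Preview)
Your overall architecture is the paper's: show that every position of value $\alpha$ for $O$ is controlled by a finite subboard $D$, and observe this is incompatible with $\alpha=\omega$. The paper separates this into two lemmas (Lemma~\ref{mb_lemma}, a finite induction establishing essential locality for finite values, and Lemma~\ref{no_inf_mb}, a short contradiction at $\omega$), but the substance is your strengthened inductive claim.

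The gap is in your handling of the $C$-to-move case, which you correctly flag as the crux. Your proposed criterion for a ``null move''---a cell lying in no completable element of $\mathcal{F}$ and no completable element of $\mathcal{S}$---does not deliver finiteness: from a typical position there may be infinitely many completable winning sets for either player, and their union may exhaust $B$ (in Infinite Hex, every empty cell lies on some potential winning path for each colour). So the assertion that ``the remaining live cells are finite in number'' is unsupported; the appeal to ``$O$ forces completion in boundedly many moves along each branch'' is circular, since boundedness of the values across $C$'s replies is exactly what fails at $\alpha=\omega$ and is exactly what you are trying to establish at finite $\alpha$.

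The correct bootstrapping, which the paper calls the \emph{phantom move}, partitions $C$'s options differently. Fix \emph{one} legal $C$-move $w_0$; the resulting position has strictly smaller value, so by induction $O$ has a winning strategy confined to a finite set $D_0\ni w_0$. Now if $C$ instead plays any $w'\notin D_0$, let $O$ pretend $C$ played $w_0$ and follow the same strategy on $D_0$: this still wins, because $O$'s target winning set lies inside $D_0$ and cannot be obstructed by a stone at $w'$, and $C$ cannot win since the value was defined. Hence all of $C$'s moves outside $D_0$ are handled uniformly by the single region $D_0$; only the finitely many $C$-moves \emph{inside} $D_0$ require separate applications of the inductive hypothesis, each contributing its own finite region. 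The union is your $D_p$. Once you replace your ``live cell'' criterion with ``cell in $D_0$'', your sketch goes through and coincides with the paper's proof.
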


\medskip

In order to highlight the symmetry between the first and second player, in the rest of this section stone-placing games will be played on $(B,\mathcal{O},\mathcal{C})$ by the players Open and Closed, who have winning conditions $\mathcal{O}$ and $\mathcal{C}$, respectively.

\begin{lemma}\label{mb_lemma}
Given any stone-placing game $\mathcal{G}$ played on $(B,\mathcal{O},\mathcal{C})$ which has finite game value $n$, we have that $\mathcal{G}$ is essentially local for Open.

That is, there is a finite subset of the board $D\subset B$ such that Open has a strategy that allows him to win in at most $n$ turns by choosing vertices only in $D$, and $B\setminus D$ is a dead region for Closed.
\end{lemma}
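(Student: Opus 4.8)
The plan is to prove Lemma \ref{mb_lemma} by induction on the finite game value $n$, exploiting the fact that in a stone-placing game extra stones for the losing player Closed are never harmful — this is precisely what will let us "freeze" a cofinite dead region.

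\medskip

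\emph{Base case.} If $\mathcal{G}$ has value $0$, then Open has already won, meaning Open has already marked all vertices of some winning set $O\in\mathcal{O}$; since $\mathcal{G}$ is open for Open, by Remark \ref{rmk_open_st_pl} we may take $O$ to be finite. But at the initial position of a stone-placing game no vertex is yet marked, so value $0$ forces a degenerate situation; more honestly, the genuine base case to handle is value $1$ (Open to move, can complete a finite winning set in one move) together with a careful reading of how positions with stones already placed arise. In either case the witnessing set $D$ is finite: it is the (finite) winning set Open is about to complete.

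\medskip

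\emph{Inductive step.} Suppose the claim holds for all values $<n$ and let $\mathcal{G}$, played on $(B,\mathcal{O},\mathcal{C})$, have value $n\ge 1$. I split on whose turn it is at the initial position.

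If Open is to move, then by Definition \ref{def_game_value} Open has a move to a position $p'$ of value $n-1$; the resulting game $\mathcal{G}'$ is again a stone-placing game (one more vertex, say $v$, now assigned to Open) of value $n-1$, so by the inductive hypothesis there is a finite $D'\subset B$ witnessing essential locality for $\mathcal{G}'$. Set $D=D'\cup\{v\}$; this is finite, Open's strategy is "play $v$, then follow the $\mathcal{G}'$-strategy inside $D'$", winning in $\le n$ turns using only vertices of $D$. The point to check is that $B\setminus D$ is dead for Closed in $\mathcal{G}$: giving those vertices to Closed up front only adds Closed-stones, and by the no-disadvantage property of stone-placing games (the monotonicity underlying Proposition \ref{stone_pl_mirroring} and Remark \ref{rmk_finite_hex}.\ref{extra_stone}, formalised for this class) the value cannot rise above $n$, while Open's explicit $D$-confined strategy shows it cannot drop below $n$.

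If Closed is to move, then $n=\sup\{\text{value of }q : q \text{ reachable by Closed}\}$, and since $n$ is finite and this set of values is upward-closed-below by Proposition \ref{init_segment_ord}, some reachable $q$ has value exactly $n$; moreover \emph{every} reachable $q$ has value $\le n$. Here is the crux: I cannot simply take one child, because Closed chooses. Instead, for each vertex $w\in B$ that Closed might mark, the child $\mathcal{G}_w$ has value $n_w\le n$, and by a sub-induction (value $\le n$, with the parity now favourable — Open moves next in $\mathcal{G}_w$) I get a finite dead-region witness $D_w$ with $|D_w|\le$ (roughly) $n_w$. The danger is that $\bigcup_w D_w$ ranges over infinitely many $w$ and need not be finite. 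The resolution: declare $D$ to be the finite set obtained by taking, among Closed's possible replies, only those finitely many $w$ that actually matter — but that is not obviously finite a priori. So the honest move is: assign \emph{all} of $B\setminus\{$some finite core$\}$ to Closed at once, observe the value stays $n$ (monotonicity), and now in this modified game Closed's only "live" replies lie in that finite core; apply the Open-to-move case after Closed's forced reply inside the core. This is where the argument really has to be done with care.

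\medskip

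\textbf{Main obstacle.} The essential difficulty is exactly the Closed-to-move step: reconciling the \emph{supremum} in the definition of game value with the need for a single \emph{finite} dead region, when Closed has infinitely many replies. The finiteness of $n$ is what saves us — a finite value cannot be a supremum of a strictly increasing $\omega$-sequence, so only finitely many "value levels" occur — but turning that into a genuinely finite set $D$ of board cells requires the monotonicity (no-disadvantage) property to let us freeze everything outside a core first, and then argue. I expect the write-up to lean on the fact that once a cofinite set is handed to Closed, Open's finite winning strategy witnesses that only boundedly many cells near Open's play can be relevant, giving the finite $D$; making "only boundedly many cells are relevant" precise for arbitrary hypergraphs $\mathcal{O}$ with a finite basis is the real content.
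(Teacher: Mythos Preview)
Your inductive structure and the Open-to-move case are essentially the paper's, but there is a genuine gap in the Closed-to-move case: you correctly identify the obstacle (infinitely many replies by Closed, only one finite $D$ wanted) and then do not resolve it. Your proposed resolution --- ``assign all of $B\setminus\{\text{some finite core}\}$ to Closed, observe the value stays $n$, then work inside the core'' --- is circular: you never say what the finite core is, and ``the value stays $n$'' is precisely the content of the lemma, not something monotonicity gives you for free. (Indeed your directions are swapped: extra stones for Closed can only \emph{raise} Open's value, never lower it; it is Open's explicit $D$-confined strategy that bounds the value from above.) Your own closing paragraph concedes that ``making `only boundedly many cells are relevant' precise \ldots\ is the real content'', which is an accurate admission that the proof is not yet there.

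The missing idea is a bootstrap via a \emph{phantom move}. Fix one arbitrary reply $w_0$ by Closed; Open answers with $v_0$, reaches a position of value $\le N$, and the inductive hypothesis yields a finite $D_0$ containing $w_0,v_0$ and the rest of Open's play for this line. Now the crucial observation: if Closed's actual first move lies \emph{outside} $D_0$, Open simply ignores it and plays as though Closed had chosen $w_0$; this works because the winning set Open eventually completes lies entirely inside $D_0$, so a Closed stone outside $D_0$ cannot obstruct it. Hence only the finitely many possible Closed replies \emph{inside} $D_0$ need separate treatment; for each such $w_i$ the inductive hypothesis gives a finite $D_i$, and $D=\bigcup_i D_i$ is the finite set you want. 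This bootstrap --- one arbitrary reply produces the finite core, and everything outside that core is handled by pretending Closed made the phantom move $w_0$ --- is exactly the step your sketch is missing.
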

\begin{proof}
We prove this by induction on $n$.

\medskip

If $n=0$, then there is nothing to prove, as Open has already won at  the initial position and does not need to make any move. Trivially, $D=\emptyset$.

\medskip

Suppose that $n=1$; we consider this case for clarity of argument.
Let $\sigma$ be a winning strategy for Open realising this game value.

If Open is the next to move, then he can simply win by choosing a single vertex $v\in B$, as prescribed by $\sigma$; in this case, $D=\{v\}$ is sufficient.

If instead Closed is the next to move, then suppose that he chooses some vertex $w$, without winning.
Then Open can again make a winning move by choosing some $v'\neq w$. If instead Closed decides to initially choose $v'$, then Open can still win by choosing the vertex $v''\neq v'$ prescribed by $\sigma$ in that position.
In either case, $D=\{v',v'',w\}$ is as needed.

\medskip

Suppose now that the Lemma is true for $n=N$. Let $\mathcal{G}$ have game value $N+1$ for Open, and let $\tau$ be a value-reducing strategy that realises this game value.

\medskip

If Open is the next to move from the initial position, then, by definition of game value,\footnote{Definition \ref{def_game_value}.} he can choose a vertex $v\in B$ so to reach a position with game value $N$.
Then, by inductive hypothesis, there is a finite subset $D'\subset B$ such that Open can win in at most $N$ moves by choosing vertices only from $D'$. In this case, $D=D'\cup \{v\}$ is as needed.

\medskip

If instead Closed is the next to move from the initial position, then suppose that he chooses some $w_0\in B$.
Following $\tau$, Open can mark some other $v_0\in B$ to reach a position with game value at most $N$; by inductive hypothesis, there is a finite sub-board $D_0'\subset B$ such that Open wins by choosing vertices only from it. Let $D_0=D_0'\cup\{v_0,w_0\}$.


Consider again the initial position of $\mathcal{G}$; suppose that there are $k$ unchosen vertices\footnote{Note that both $v_0$ and $w_0$ are still unchosen at the initial position.} in $D_0'\cup\{v_0\}$ at the start, say $\{w_i\}_{i=1}^k$.
Now, take some $1\le i\le k$ and suppose that Closed decides to initially choose $w_i$, to which Open replies by choosing some $v_i$ determined by $\tau$, reaching a position of game value at most $N$; by inductive hypothesis, there is a finite sub-board $D_i'\subset B$ such that Open wins by playing only on its vertices.
Note that, in this case, the line of play of Open is only influenced by the moves made on the sub-board $D_i=D_i'\cup\{v_i,w_i\}$.

\medskip

We claim that $D=\bigcup_{i=0}^k D_i$ is a finite sub-board that satisfies the condition of the Lemma at the initial position of $\mathcal{G}$.

Suppose that Closed initially chooses an available vertex in $D_0$, that is, he chooses $w_i$ for some $0\le i\le k$.
Open can reply by choosing the corresponding vertex $v_i$, and subsequently win in at most $N$ more moves by playing on $D_i$; Open thus ends the game having chosen vertices only in $D_i\subset D$, by construction.

Suppose now that Closed makes an initial move outside of $D_0$, choosing some vertex $w'\not\in D_0$ (but possibly in $D$). Open can pretend to see $w'$ as still unchosen and he can play as if Closed had made the \emph{phantom move} of choosing $w_0$.
Hence Open can win by playing only on $D_0\subset D$, as before.
Observe that in this case the initial move by Closed does not obstruct Open's play since Open wins by marking all the vertices of some $o\in\mathcal{O}$ such that $o\subset D_0$, and that certainly does not depend on $w'\not\in D_0$.

\medskip

This concludes the proof of the Lemma.
\end{proof}

\medskip

\begin{lemma}\label{no_inf_mb}
No stone-placing game open for some player has infinite game value for that player.
\end{lemma}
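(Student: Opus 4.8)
The plan is to prove the contrapositive: if a stone-placing game $\mathcal{G}$ played on $(B,\mathcal{O},\mathcal{C})$ is open for Open and has a defined game value $\alpha$ for Open, then $\alpha$ must be finite. Suppose towards a contradiction that $\alpha$ is infinite. Using Proposition \ref{init_segment_ord}, the value $\omega$ is realised by some position $p$ reachable from the initial position by a finite sequence of legal moves; since $\omega$ is a limit ordinal, Definition \ref{def_game_value} forces Closed to be the player to move at $p$. Thus from $p$ every move available to Closed leads to a position of some finite value $n$, and these finite values are unbounded as Closed ranges over her (at most countably many) available moves; otherwise the supremum would be a finite number rather than $\omega$.

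Now I would apply Lemma \ref{mb_lemma} to each of those successor positions. Fix any large $N$, pick a move $w_N$ for Closed at $p$ leading to a position $p_N$ of value $N$; by Lemma \ref{mb_lemma} there is a \emph{finite} dead region complement $D_N\subset B$ such that Open wins in at most $N$ moves by playing only inside $D_N$. The crucial point is that the position $p$ has value $\omega$ for Open, which means Open has a winning strategy from $p$ — and more importantly, the \emph{structure} of a stone-placing game makes Open's initial line of play essentially independent of where Closed is ``wasting'' moves. The key observation is the same phantom-move device used in the proof of Lemma \ref{mb_lemma}: if Closed plays a vertex outside the finite set that Open cares about, Open simply ignores it and treats it as unchosen. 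I would use this to argue that the positions $p_N$ cannot in fact have arbitrarily large finite values.

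Concretely, here is the contradiction I expect to extract. Since extra moves are always advantageous to the player making them in a stone-placing game (the closure under supersets in Definition \ref{def_stone_placing}), giving Closed an \emph{extra free stone} never helps Open. But from $p$, one of Closed's legal moves — say to $p_N$ of value $N$ — can be viewed by Open as a position where Closed has merely been handed one extra stone compared with some reference position. Running the phantom-move argument, Open's winning strategy from $p_N$ can be transplanted to a strategy from $p$ itself (Open pretends Closed's stone at $w_N$ is unplayed, wins on a finite dead-region complement, and any real interference by Closed only helps Open since it amounts to more Closed stones on squares Open does not need); this would show $p$ itself has value at most $N+1$ or so, for \emph{every} $N$ — impossible, since $p$ has value $\omega$.

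The main obstacle, and the step I would spend the most care on, is making the transplantation rigorous: one must check that Open's win inside the finite dead region $D_N$ genuinely does not depend on Closed's stone at $w_N$, i.e.\ that $w_N$ can be absorbed into the dead region for Closed without lowering the value below $N$. This is exactly the content of Lemma \ref{mb_lemma} applied carefully — $B\setminus D_N$ is a dead region for Closed, so assigning $w_N$ (if it lies outside $D_N$) to Closed as advantage does not change the value, and if $w_N$ lies inside $D_N$ one enlarges $D_N$ by Closed's finitely many threatening replies exactly as in the inductive step of Lemma \ref{mb_lemma}. Once that bookkeeping is done, the supremum defining the value of $p$ collapses to a finite number, contradicting that $p$ has value $\omega$, and hence no infinite value is attainable.
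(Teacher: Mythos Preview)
Your plan is the paper's: reduce via Proposition~\ref{init_segment_ord} to a position $p$ of value $\omega$ with Closed to move, fix one Closed move $w$ giving a position of finite value $N$, apply Lemma~\ref{mb_lemma} to get a finite region $D$ on which Open wins in at most $N$ moves, and then run the phantom-move device to bound the value after \emph{any} initial move by Closed --- contradicting that $p$ has value $\omega$.

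Two places where your write-up drifts and should be tightened. First, the phantom move points the other way: when Closed actually plays some alternative $w'$, Open \emph{pretends Closed played $w_N$} and follows the strategy from $p_N$; you are not pretending a stone at $w_N$ is ``unplayed'' (in the actual game there is no stone there). The reason this is safe is that Open's target winning set $o\in\mathcal{O}$ satisfies $o\subset D$, so Closed's real stone at $w'\notin D$ cannot block it, and the absence of a Closed stone at $w_N$ only helps Open. Second, the case split in your last paragraph should be on Closed's \emph{alternative} first move $w'$, not on $w_N$: if $w'\notin D$ the phantom argument applies verbatim; if $w'\in D$ there are only finitely many such moves, each with some finite value, so the supremum defining the value of $p$ is still finite. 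Finally, a single fixed $N$ already yields the contradiction --- the phrase ``for every $N$'' is superfluous and slightly misleading.
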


\begin{proof}
Let $\mathcal{G}$ be a stone-placing game played on $(B,\mathcal{O},\mathcal{C})$, which is open for Open.
Suppose for a contradiction that $\mathcal{G}$ has infinite game value for Open; by Proposition \ref{init_segment_ord}, it is enough to assume that $\mathcal{G}$ has game value $\omega$ for Open.

By definition of game value, Closed is the first to move in $\mathcal{G}$ and can only reach positions with finite game value from the initial position.

So, suppose that Closed initially chooses some $w\in B$, reaching a position with finite value $N$ for Open. Now, by Lemma \ref{mb_lemma}, there is a finite sub-board $D'\subset B$ such that Open can win in at most $N$ turns by marking vertices only in $D'$.
Let $\sigma$ be a value-reducing strategy for Open that realises this in $\mathcal{G}$; note that the moves prescribed by $\sigma$ for Open only depend on the moves that take place on $D=D'\cup \{w\}$.

\medskip

Suppose now that, from the initial position, Closed makes a different move $w'\not\in D$.

Then, Open can ignore such move by Closed, and imagine instead that Closed had made the phantom move of choosing $w$, and then play according to $\sigma$ from such position imagined by Open.

Similarly to the situation in the proof of Lemma \ref{mb_lemma}, Open can safely ignore Closed's move in $w'$ because $\sigma$ allows him to win by playing only on $D$, and so by marking all the vertices of some $o\in\mathcal{O}$ such that $o\subset D$; it is clear that Closed's moves outside of $D$ cannot obstruct this goal.

It is important to note that, by playing outside of $D$, Closed cannot aim to win, as he can only reach positions with defined game value for Open.

\medskip

We have shown that, for an arbitrary initial move by Closed, Open can force a win in at most $N$ turns; thus, the position achieved by the initial move of Closed has game value bounded by $N$. But the initial position had game value $\omega$; this is a contradiction.
\end{proof}

\medskip

\begin{proof}[Proof of Theorem \ref{st_pl_thm_ess_loc}]
A stone-placing game with value defined for some player can only have finite game value, by Lemma \ref{no_inf_mb}; such game is thus essentially local for that player by Lemma \ref{mb_lemma}.
\end{proof}

\begin{remark}
Observe that the arguments above can be adapted to generalise the results to $n$-player stone-placing games, once the definition of game value is extended to $n$-player games.
\end{remark}

\bigskip

By definition of omega one, we have the following immediate consequence of Theorem \ref{st_pl_thm_ess_loc}.

\begin{theorem}\label{stone_pl_omega_one_thm}
The omega one of any class of stone-placing games open for some player is at most $\omega$.
\end{theorem}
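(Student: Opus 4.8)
The plan is to read this off directly from Theorem \ref{st_pl_thm_ess_loc}, since the omega one is by definition a supremum of game values. First I would unwind the definition: if $\mathbb{G}$ is a class of stone-placing games all open for some designated player $O$, then $\omega_1^{\mathbb{G}}$ is the supremum of the game values for $O$ of all games in $\mathbb{G}$. By Theorem \ref{st_pl_thm_ess_loc}, any stone-placing game with a defined game value for $O$ has \emph{finite} game value; hence every game in $\mathbb{G}$ whose value for $O$ is defined contributes a natural number to this supremum.

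Next I would invoke the elementary fact that a supremum of a set of natural numbers is either itself a natural number, when the set is bounded, or equals $\omega$, when the set is unbounded; in either case it is at most $\omega$. Therefore $\omega_1^{\mathbb{G}}\le\omega$, as claimed. If I wanted to be scrupulous about degenerate cases, I would add a remark that a class containing no game with defined value causes no trouble, since then the supremum is taken over the empty set and equals $0\le\omega$.

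I expect no real obstacle here: all of the mathematical work is already carried by Theorem \ref{st_pl_thm_ess_loc}, and behind it by Lemmas \ref{mb_lemma} and \ref{no_inf_mb}; the present statement is purely a matter of passing from individual games to a class. The only thing worth emphasising in the write-up is that the bound $\omega$ is a supremum and is generally not attained — it is approached only in the limit, for instance by the Infinite Hex positions with arbitrarily many interlinked bridges from Section \ref{finite_hex_values}, which is exactly what yields $\omega_1^{\rm Hex}=\omega$.
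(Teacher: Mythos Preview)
Your proposal is correct and matches the paper's approach exactly: the paper states this theorem as an immediate consequence of Theorem \ref{st_pl_thm_ess_loc} via the definition of omega one, and you have simply written out that deduction in full.
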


\bigskip

We see that also Theorem \ref{no_inf_hex} is now proved; following our previous discussion of Infinite Hex positions with finite game value, we can conclude by stating the result which answers the main question that motivated the present research.

\begin{theorem}
The omega one of the class of open games of Infinite Hex is the smallest infinite ordinal, that is,
\[\omega_1^{\rm Hex}=\omega.\]
\end{theorem}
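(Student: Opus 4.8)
The plan is to sandwich $\omega_1^{\rm Hex}$ between $\omega$ and $\omega$ using results already in hand, so the argument is short. For the upper bound I would invoke the representation of any game $\mathcal{H}$ of Infinite Hex as a stone-placing game played on $(\widetilde{H},\widetilde{\mathcal{R}},\widetilde{\mathcal{B}})$, exactly as set up in the discussion following Theorem \ref{no_inf_hex}. Theorem \ref{stone_pl_omega_one_thm} then applies verbatim: the omega one of any class of open stone-placing games is at most $\omega$, and the class of open games of Infinite Hex is such a class, so $\omega_1^{\rm Hex}\le\omega$. Equivalently, one can quote Theorem \ref{no_inf_hex} directly, which already states that every game of Infinite Hex with a defined value has finite game value, so the supremum of those values cannot exceed $\omega$; in particular no open game of Infinite Hex attains the value $\omega$ itself.

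For the lower bound I would point to the explicit positions of Section \ref{finite_hex_values}: the interlinked-bridges construction produces, for every $n\in\mathbb{N}$, a position of Infinite Hex whose game value for Red is exactly $n$. Since Red's only winning lines from such a position are the essentially finite completions of the virtual links, the corresponding game is genuinely open for Red and hence belongs to the class under consideration, and it witnesses that $\omega_1^{\rm Hex}\ge n$ for every $n$. Taking the supremum over $n$ gives $\omega_1^{\rm Hex}\ge\omega$. Combining the two bounds yields $\omega_1^{\rm Hex}=\omega$; it is irrelevant that the value $\omega$ is not itself realised by any single open game of Infinite Hex, since the claimed equality concerns only the supremum, and $\omega$ is in any case the smallest infinite ordinal, giving the stated form of the answer.

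The one point that warrants a line of care — and the closest thing to an obstacle here, now that all the substantive work is carried by the already-proved Theorem \ref{st_pl_thm_ess_loc} — is confirming that the finite-value positions of Section \ref{finite_hex_values} are legitimate members of the class of \emph{open} games of Infinite Hex, i.e.\ that from those positions Red cannot win by any genuinely infinite play but only by a finite realisation of the bridges; this is precisely what the explicit geometry of the interlinked-bridges position secures. With that in place, the theorem is an immediate corollary of Theorem \ref{no_inf_hex} (or Theorem \ref{stone_pl_omega_one_thm}) together with the constructions of Section \ref{finite_hex_values}, and nothing further is required.
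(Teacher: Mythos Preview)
Your proposal is correct and matches the paper's own reasoning essentially verbatim: the paper does not give a separate proof for this theorem but simply states it as the conclusion of Theorem \ref{no_inf_hex} (equivalently Theorem \ref{stone_pl_omega_one_thm}) for the upper bound together with the interlinked-bridges constructions of Section \ref{finite_hex_values} for the lower bound. Your additional remark checking that the bridge positions genuinely belong to the class of open games is a point the paper leaves implicit, so if anything you are slightly more careful than the source.
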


\section{Other generalisations of Hex}
It is not easy to intuitively capture the complexity of finite board games, so we now present explicitly some measurement of complexity for the finite games we mention in this work.

We mention the following estimates\footnote{From \cite{herik02}, and \cite{uiterwijk09} for TwixT.} of the state-space and game-tree complexities of the different board games we are considering.

\begin{center}
\begin{tabular}{lcc}
\toprule
Game	&	State-space complexity	&	Game-tree complexity		\\
\midrule
Checkers $(8\times 8)$	&	$10^{21}$		&	$10^{31}$		\\[2pt]
Draughts $(10\times 10)$	&	$10^{30}$		&	$10^{54}$		\\[2pt]
Chess	&	$10^{46}$		&	$~10^{123}$		\\[2pt]
Hex $(11\times 11)$	&	$10^{57}$		&	$10^{98}$		\\[2pt]
TwixT $(24\times 24)$	&	$>>10^{140}~~~~$		&	$~10^{159}$		\\[2pt]
\bottomrule
\end{tabular}
\end{center}

\subsubsection{The Shannon game}

The Shannon game is played on a graph $\Gamma$ with two distinguished vertices.

Rephrasing the definition of \cite[\textsection 2]{hayward06}, we can define this game as a Maker-Breaker game played on $(B,\mathcal{F})$, such that $B$ is the set of non-distinguished vertices of $\Gamma$ and $\mathcal{F}$ is the collection of paths that join the distinguished vertices of $\Gamma$.

Usually, Maker is called Short, and Breaker is called Cut.

\medskip

It should come as no surprise that Hex can be represented as a Shannon game.


\subsubsection{TwixT}
TwixT is a connection game played on a $24\times 24$ square lattice. The two players place stones on the vertices of such lattice and join two of their stones with an edge when they are opposite vertices of a $2\times 1$ rectangle.

The peculiarity of this game is that edges cannot intersect, so that the order in which moves are made have a great importance in determining the outcome of a game.

Moreover, even though TwixT looks rather different from Hex, it is actually a generalization of it. Thanks to the constructions provided by \cite{bonnet16}, we can simulate any Hex game on an appropriately filled TwixT board.

Observe that TwixT can be described as a particular edge-colouring game on the Knight's graph, i.e.~the graph obtained by joining the squares of a chessboard separated by a Chess knight's move.

However, TwixT cannot be directly studied as a stone-placing game, and so we cannot apply our results to an infinite version of TwixT.


\chapter{Draughts}\label{chapter_draughts}

Also known as Checkers, Draughts is a board game that has a long tradition and many variants, all with the same core rules.

Some pieces, called \emph{pawns},\footnote{Pawns are usually called \emph{pieces}, which is a word that we will use to refer to both pawns and kings.} are initially placed on the white squares\footnote{We follow the common custom of displaying Draughts positions as played on the white squares of the board for clarity of presentation; we state the rules accordingly.} of a chessboard, or \emph{draughtboard}, usually of size $8\times 8$, which we orient as usual.
At the initial position, the pieces of Black are placed on the Southern side of the board and the pieces of White on the Northern side, as in figure \ref{standard_draughts}.

\begin{figure}[h]
\centering
\includegraphics[width=4cm]{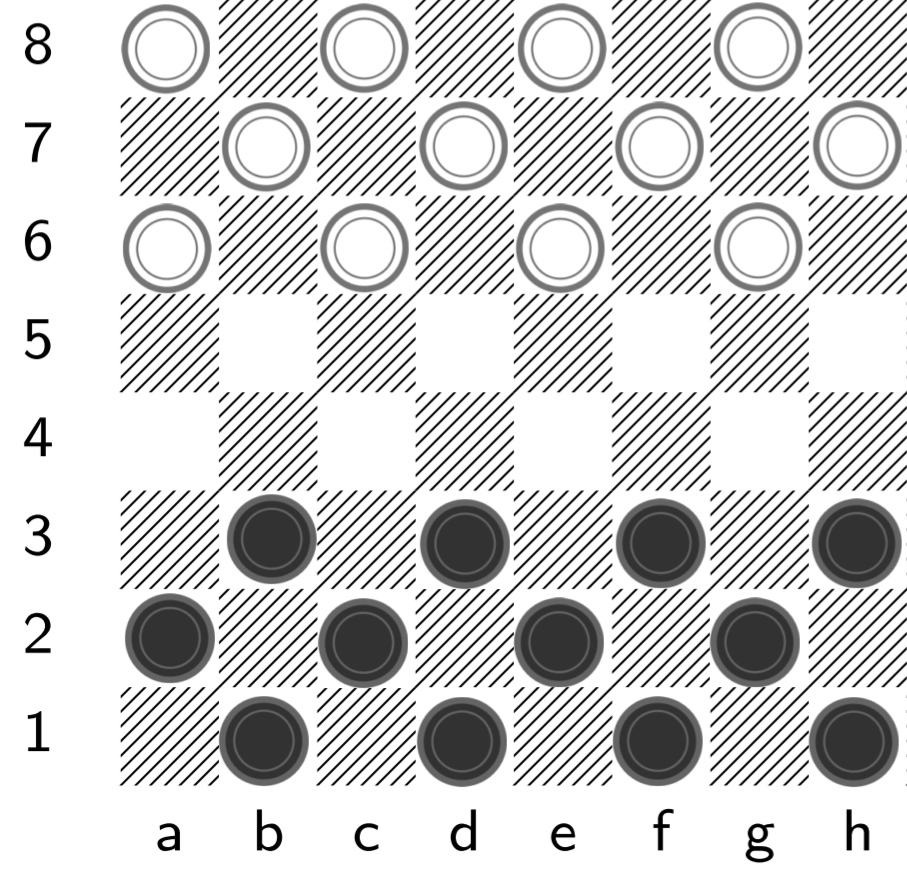}
\caption{Initial position of finite Draughts.}
\label{standard_draughts}
\end{figure}

\medskip

At each turn, a player can move one of his pieces to one of the empty squares diagonally adjacent to it.

If a player's piece is adjacent to an opponent's piece, which in turn is adjacent to an empty square aligned with such two pieces, then the player can make a \emph{simple jump} move
letting his piece ``jump" to the empty square, removing the ``jumped" opponent's piece from the board; see figure \ref{simple_jump}.

\begin{figure}[h]
\centering
\begin{subfigure}{.25\textwidth}
  \centering
  \includegraphics[width=.7\linewidth]{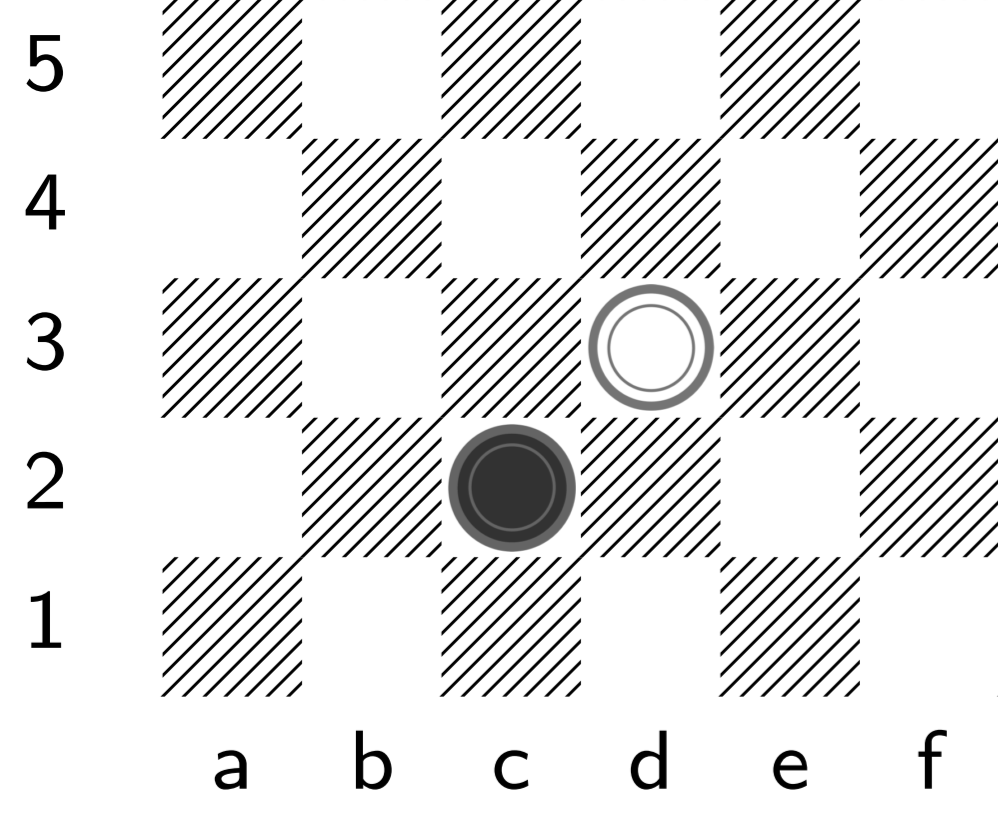}
  \caption{Before}
  \label{jump_before}
\end{subfigure}
\begin{subfigure}{.25\textwidth}
  \centering
  \includegraphics[width=.7\linewidth]{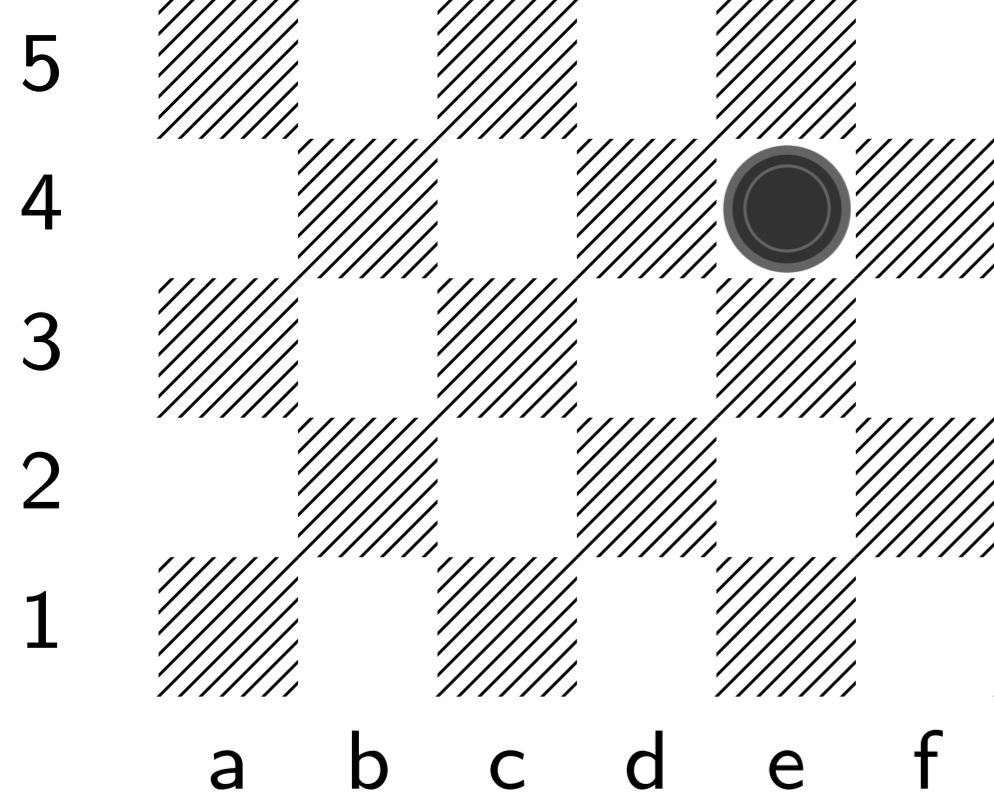}
  \caption{After}
  \label{jump_after}
\end{subfigure}
\caption{A simple jump by Black.}
\label{simple_jump}
\end{figure}

Moreover, if the player's piece is, immediately after this jump, in a similar position, then the player can extend his move by making another jump; the resulting move is called an \emph{iterated jump}.
It is important to note that an iteration of jumps constitutes a single move, rather than multiple moves.

We refer to these two last types of move as \emph{jump} moves.

\medskip

In particular, Black pawns are only able to make such moves going strictly towards North; when a Black pawn reaches the Northernmost row of the chessboard, it is promoted to \emph{king}, so that it can then make the moves described above in all directions.
An analogous rule applies to White pieces.

\medskip

The first player that is not able to make a move, either because he has no pieces left on the board or because he has no available legal moves, loses.

\medskip

It is required in Draughts competitions that a player who can make a jump move at some turn must do so at that turn.
Moreover, after a jump move, the piece just moved cannot be in a position from which another jump move could be possible.

In other words, if a player makes a piece jump, then he has to make it keep jumping, choosing which way to jump, until it reaches a position from which no more jumps are possible.

We can formalise this as follows.

\begin{game_rule}[Forced Jump]\label{forced_jump}
Each player must make jump moves, when they are legal.
\end{game_rule}

\begin{game_rule}[Forced Iteration]\label{forced_iteration}
No player can make a jump move which can be extended as an iterated jump move.
\end{game_rule}

Observe that rule \ref{forced_iteration} allows players to make only maximal jump moves;
in particular, it allows players to choose among distinct maximal jump moves.
See for instance the position in figure \ref{dr_mult_jump}, where Black can make a simple jump to the square labelled with a dagger, or make either of the iterated jumps to the starred squares; however, Black cannot make a simple jump to the square labelled with ``$\circledast$" by rule \ref{forced_iteration}.

\begin{figure}[h]
\centering
\includegraphics[width=4.5cm]{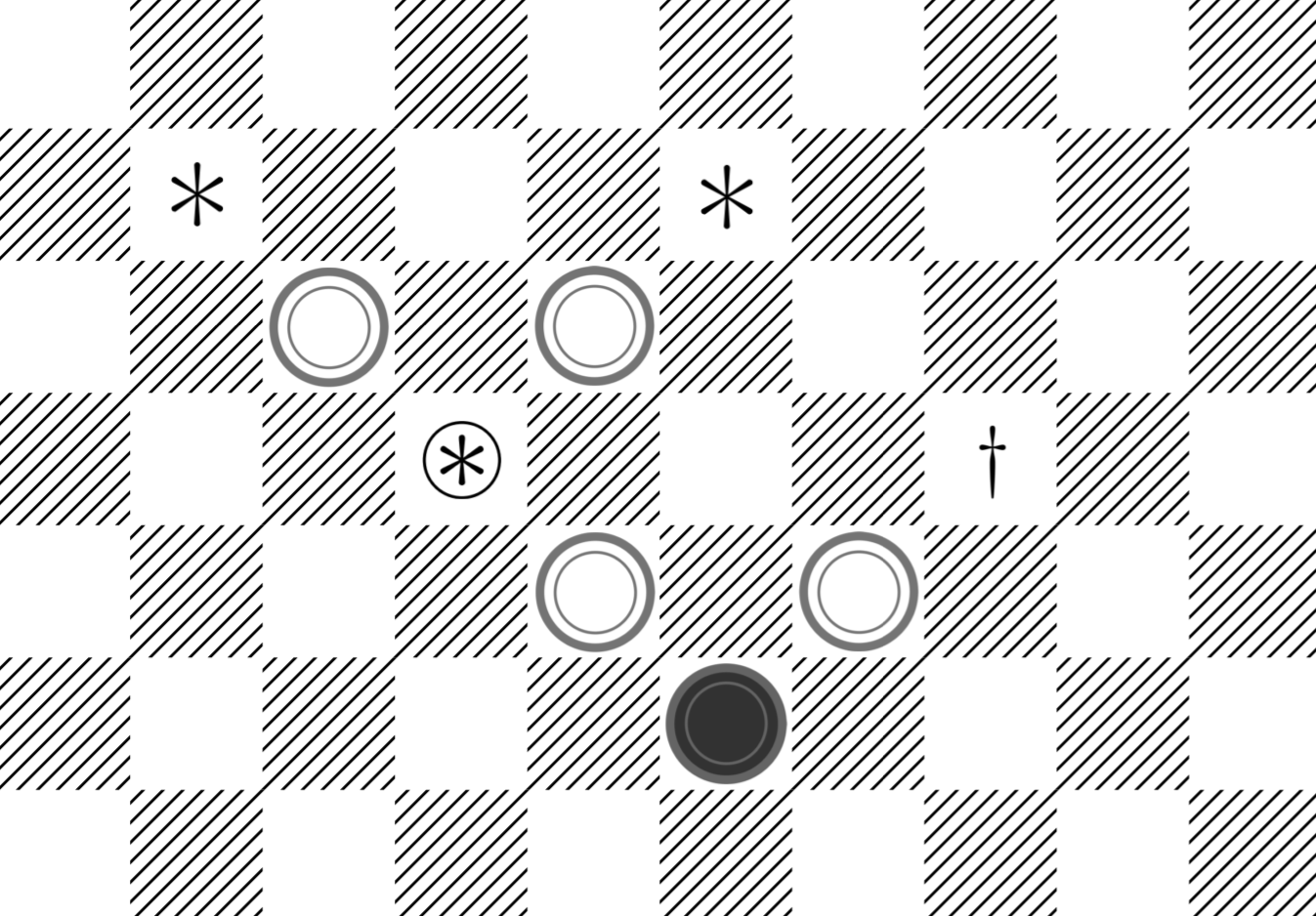}
\caption{Distinct maximal jumps for Black.}
\label{dr_mult_jump}
\end{figure}

\begin{remark}
Rule \ref{forced_iteration} is justified by the fact that if a player makes one of his pieces jump without completing an available jump iteration, then that piece will be threatened, and likely captured at the next turn,\footnote{Or certainly captured, in case that is the only jump move available to the opponent.} by an opponent's piece, as in the case of Black stopping on the square labelled ``$\circledast$" in figure \ref{dr_mult_jump}.\footnote{Moreover, it is usually felt as an unfair advantage the possibility of a king to end a jump at the ``back" of an opponent's pawn, by which the king cannot be threatened.}
\end{remark}

\section{Infinite Draughts}

Analogously to Infinite Chess,\footnote{See Example \ref{infinite_chess}.} Infinite Draughts is a game played on the infinite chessboard; in particular on its white squares, which are also in bijection with $\mathbb{Z}\times\mathbb{Z}$.

We can give a North-South orientation to the board so to maintain the difference between pawns and kings; however, all the pieces in the positions that follow are kings, unless otherwise specified.

Even though the rules of the game are simple and immediately generalise to the infinite case, we encounter a difficulty with the forced-iteration rule \ref{forced_iteration}; observe the position in figure \ref{dr_inf_jump}.

\begin{figure}[h]
\centering
\includegraphics[width=7cm]{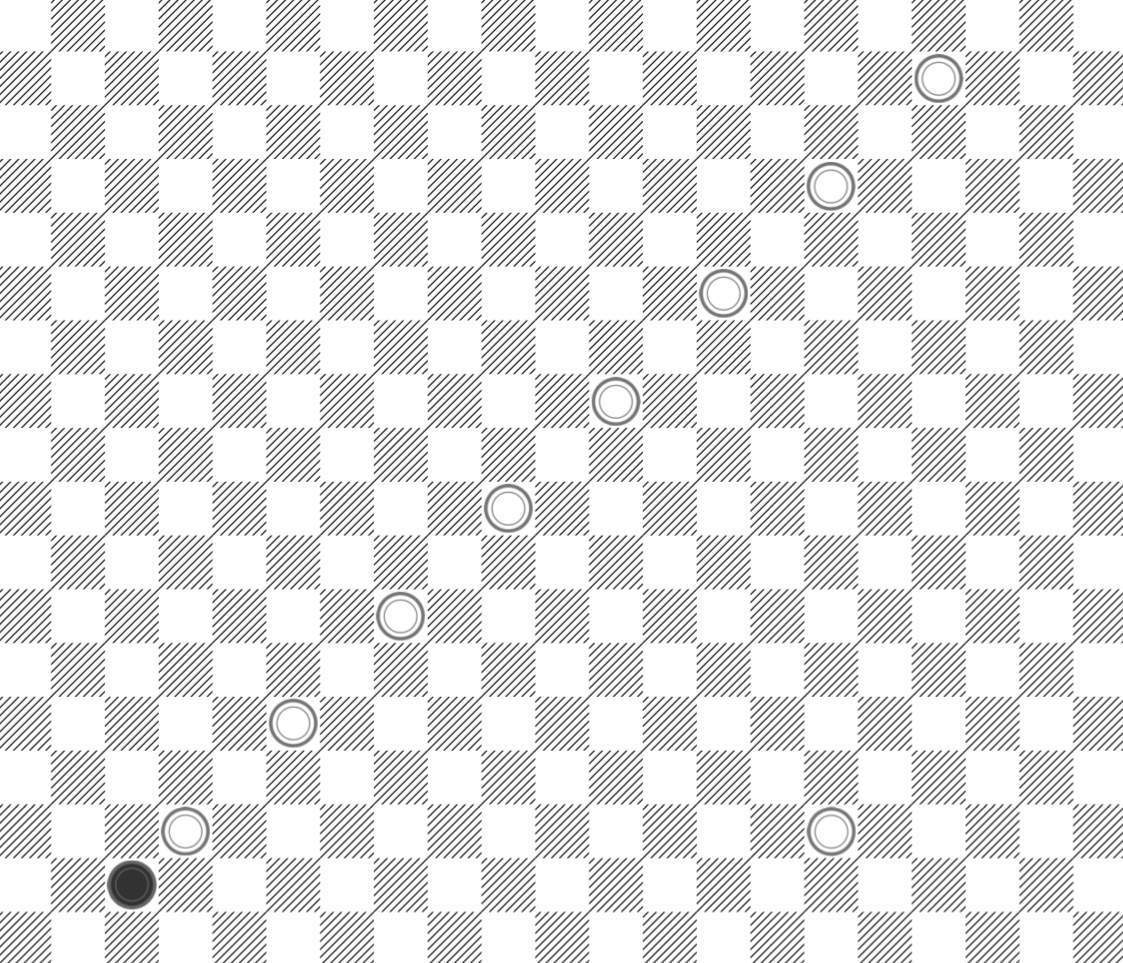}
\caption{Black piece at the bottom of an infinite White ladder, Black to move.}
\label{dr_inf_jump}
\end{figure}

\medskip

In fact, assuming only rule \ref{forced_jump}, if Black makes a jump move up the White infinite \emph{ladder} in figure \ref{dr_inf_jump} and captures only finitely many White pieces, then White wins on the next move, as Black would stop his only piece in a square already threatened by White.
Thus, if we applied strictly the forced-iteration rule \ref{forced_iteration}, we would say that Black must make an infinitely iterated jump, capturing infinitely many White pieces in a single move.

Note that, on the next turn, White can move his only surviving piece at the bottom right, but, following that, Black has no pieces left on the board and thus loses;
in fact, having made an infinitely iterated jump, the Black king is not on any of the squares of the infinite chessboard, thus it is not on the board anymore.\footnote{In other words, the Black king  effectively went to infinity.}

\medskip

It is clear that, for Black, making an infinitely iterated jump in the position of figure \ref{dr_inf_jump} is equivalent to admitting defeat;
as we will consider positions in which Black also loses if he decides to make an infinitely iterated jump and we will want Black to delay his defeat as much as possible, we generalise the forced-iteration rule of Draughts as follows, in order to simplify the exposition in the next section \ref{trees_in_draughts}.

\begin{game_rule}[Finite Iteration]\label{fin_iteration}
If a player can make an iterated jump move, then he can only make a finite iteration of jumps.
\end{game_rule}

\begin{remark}
Observe that rule \ref{fin_iteration} is still coherent with rules \ref{forced_jump} and \ref{forced_iteration} also when considering the position of figure \ref{dr_inf_jump}.
In that position, making a jump move up the White ladder would not be a legal move for Black, who can thus move his only king in one of the other directions.
\end{remark}

\subsection{Embedding trees in the draughtboard}\label{trees_in_draughts}

In this section, we will consider games of Infinite Draughts assuming rules \ref{forced_jump} and \ref{fin_iteration}, so that players must make jump moves when they can, and they can choose to iterate such jumps for at most finitely many times.

This will allow for simpler constructions in the rest of this section; we will consider different sets of rules in section \ref{draughts_further_results}.

\medskip

Observe that, similarly to Infinite Hex, games of Infinite Draughts are not in general open, as it is possible to construct positions from which a player can win over an infinite play, but not at any finite stage of such play.

We therefore focus on the class of games of Infinite Draughts which are open for some designated player; we will prove the following.

\begin{theorem}\label{dr_omega_one}
Every countable ordinal arises as the value of an open game of Infinite Draughts. Hence, 
\[\omega_1^{\rm Draughts} =\omega_1.\]
\end{theorem}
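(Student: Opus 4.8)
The plan is to use the climbing-through-$T$ games from Example \ref{climb_tree} as a template and to simulate an arbitrary well-founded tree $T$ inside an open game of Infinite Draughts. By Lemma \ref{lemma_climb_tree} and Proposition \ref{tree_value}, for every countable ordinal $\alpha$ there is a well-founded tree $T$ whose associated climbing-through-$T$ game has value $\alpha$ for Observer; since that is an open game, it suffices to realise each such game as (a position of) a game of Infinite Draughts that is open for the corresponding player, with the same game value. The upper bound $\omega_1^{\rm Draughts}\le\omega_1$ is immediate from Corollary \ref{bound_omega_one_class}, so the whole content is the lower bound, i.e.\ the construction.

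First I would set up a library of \emph{gadgets} built out of kings on the infinite draughtboard, connected by the infinite-ladder mechanism of figure \ref{dr_inf_jump} and Finite Iteration (rule \ref{fin_iteration}). The key primitive is a configuration forcing the losing player (say Black, playing the role of Climber) to make a jump up one of several available ladders: by the Forced Jump rule \ref{forced_jump} Black \emph{must} jump if a jump is available, and by Finite Iteration he may travel only finitely far, after which White can capture and win in boundedly many further moves. This means that Black's choice of \emph{which} ladder to climb corresponds to choosing a child of the current tree node, and his choice of \emph{how far} along a finite ``spur'' ladder corresponds to selecting among mate-in-$n$ positions for arbitrarily large $n$ — exactly the $\omega$-branching behaviour at a rank-$\omega$ node. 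I would then show how to splice these gadgets together along the edges of $T$: a node of $T$ with children $v_0,v_1,\dots$ becomes a junction from which Black is forced to commit to the ladder leading to the gadget coding $v_{i}$, and a leaf of $T$ becomes a gadget where White simply wins outright in finitely many moves. One must check that White's pieces in distant gadgets never interfere and that all of White's forced responses stay local, so that the game value computed on the Draughts position equals the rank of the corresponding node of $T$; this is where one invokes Proposition \ref{tree_value} to transfer the value, and one checks openness by noting White's wins all occur after finitely many moves (the only infinite play being Black's attempt to climb forever, which in a well-founded $T$ never succeeds).

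Concretely the steps are: (1) build and verify the ``forced-climb'' gadget and the ``spur'' (giving game values $0,1,2,\dots$ and $\omega$); (2) build the ``junction'' gadget that forces Black to select one among countably many outgoing ladders without giving him any other option (here one uses that the board cells are in bijection with $\mathbb{Z}\times\mathbb{Z}$, so countably many disjoint ladders fit); (3) define, by recursion on the well-founded tree $T$, a layout of gadgets on disjoint regions of the board realising $T$, proving by transfinite induction that the value of the resulting Draughts position equals $\mathrm{rank}(T)$, mirroring the induction in the proof of Proposition \ref{tree_value}; (4) conclude that for each $\alpha<\omega_1$ there is an open game of Infinite Draughts of value $\alpha$, hence $\omega_1^{\rm Draughts}\ge\omega_1$, and combine with Corollary \ref{bound_omega_one_class}.

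The main obstacle I expect is step (2)–(3): guaranteeing \emph{non-interference} and \emph{faithfulness} of the encoding. One must ensure that when Black is at a junction he has genuinely no move other than committing to one of the intended ladders (no stray legal simple move, no unintended jump, no stalemate accidentally helping Black), that White's forced replies in one gadget never free up a Black resource or block a White resource in another, and that embedding a subtree as an initial position with infinitely many pieces already placed still yields a well-defined game of Infinite Draughts (which the excerpt's setup permits). Getting the geometry right — laying out countably many ladders and junction gadgets on $\mathbb{Z}\times\mathbb{Z}$ with enough separation, and handling the North–South orientation constraint on pawns by using kings throughout — is the delicate, diagram-heavy part; once the gadgets are verified in isolation, the transfinite induction transferring $\mathrm{rank}(T)$ to the game value is routine given Proposition \ref{tree_value}.
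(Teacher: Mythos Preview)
Your proposal is correct and follows essentially the same route as the paper: reduce to climbing-through-$T$ via Lemma~\ref{lemma_climb_tree} and Proposition~\ref{tree_value}, then embed a White ``king tree'' in the draughtboard (using kings only, rules~\ref{forced_jump} and~\ref{fin_iteration}) so that a lone Black king is forced to climb it, with the upper bound coming from Corollary~\ref{bound_omega_one_class}. The one device the paper supplies for the non-interference problem you flagged is to embed \emph{two} copies of each subtree at every branching node, so that whatever single piece White moves on his turn disturbs at most one copy and Black can always climb the untouched one; together with Proposition~\ref{full_binary} (embedding the full binary tree in a quadrant) this makes your step~(2)--(3) go through.
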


\medskip

We will prove this Theorem by constructing an argument closely related to the proof that the omega one of 3-dimensional Infinite Chess is $\omega_1$, as shown by \cite[\textsection 4]{hamkins14}.

We have already met a class of games with uncountable omega one; recall the class of climbing-through-$T$ games and Proposition \ref{omega_one_climb}.

\medskip

We will now describe a way to construct, given any tree $T_0$, a game of Infinite Draughts $\mathcal{D}_{T_0}$ equivalent to the game of climbing-through-$T_0$, so that a closed-player strategy for one game determines a closed-player strategy for the other game.

First, we ensure that the infinite draughtboard does provide enough space for our construction.

Keeping in mind the bijection between the white squares of the infinite chessboard and $\mathbb{Z}\times \mathbb{Z}$ as specified by the system of reference presented in figure \ref{axes}, we prove the following.

\begin{figure}[h]
\centering
\includegraphics[width=5cm]{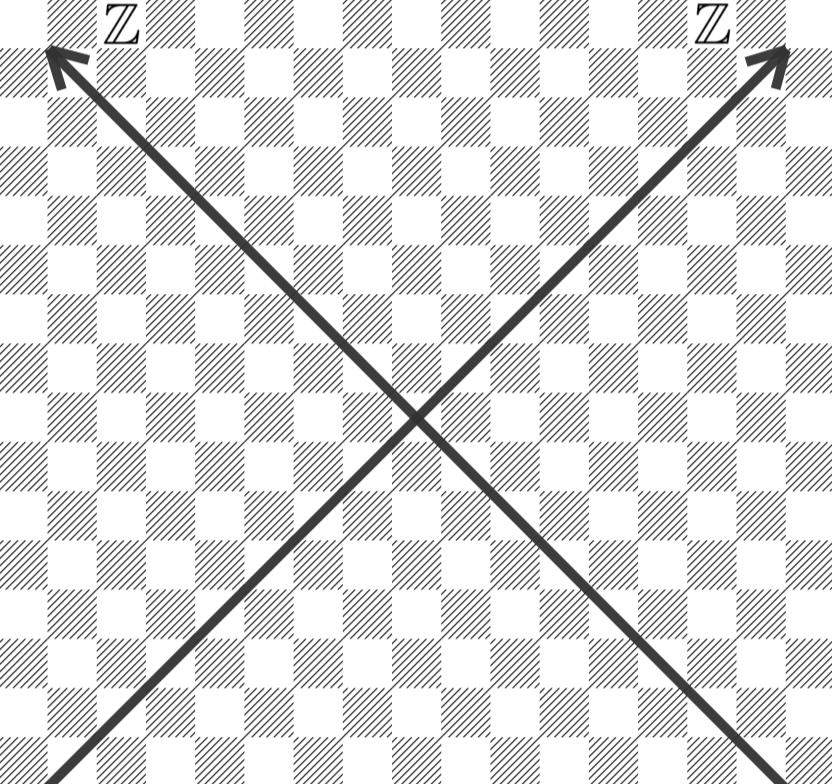}
\caption{Orthogonal axes on the Infinite draughtboard.}
\label{axes}
\end{figure}

\begin{proposition}\label{full_binary}
The full binary tree $2^{<\omega}$ can be realised in $\mathbb{Z}^+\times \mathbb{Z}^+$.
\end{proposition}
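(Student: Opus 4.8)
The plan is to give an explicit, level-by-level embedding. For a binary string $s$ of length $n$ write $\ell(s)=1+\sum_{i=1}^{n}s_{i}2^{\,n-i}$ for the rank of $s$ in the lexicographic order (with $0<1$) of the $2^{n}$ strings of length $n$, so that $\ell$ maps $2^{n}$ bijectively onto $\{1,\dots,2^{n}\}$, and fix a strictly increasing ``row schedule'' $\rho:\omega\to\mathbb{Z}^{+}$ whose successive gaps $\rho(n+1)-\rho(n)$ we are free to make as large as we wish. Using the coordinate system of figure \ref{axes}, I would place the node $s$ of depth $n$ at the cell $(\ell(s),\rho(n))$; this is manifestly injective and every cell used has both coordinates at least $1$. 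A quick computation gives $\ell(v0)=2\ell(v)-1$ and $\ell(v1)=2\ell(v)$, so the two children of a depth-$n$ node $v$ sit in row $\rho(n+1)$ weakly to the right of $v$, and I would realise the edges $v\to v0$ and $v\to v1$ by monotone lattice staircases climbing from $(\ell(v),\rho(n))$ to those cells; the wide horizontal band $B_{n}=\mathbb{Z}^{+}\times[\rho(n),\rho(n+1)]$ supplies however much vertical room the routing -- and later the White-ladder gadgets hung on these edges -- may need, and their precise local shape is dictated by, but causes no difficulty for, that construction.

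The crucial point is that the edges can be drawn pairwise non-crossingly. Band $B_{n}$ contains exactly $2^{n+1}$ wires; the wire leaving column $k$ lands in the two-element block $\{2k-1,2k\}$, and as $k$ runs over $1,\dots,2^{n}$ these blocks partition $\{1,\dots,2^{n+1}\}$ in the same order as the sources. Hence the straight segments joining each source to its target are pairwise non-crossing, and one may choose monotone lattice staircases hugging them that are non-crossing as well. Moreover a wire meets the lower boundary row $\rho(n)$ only at its source cell and the upper boundary row $\rho(n+1)$ only at its target cell, so wires in different bands touch only at the shared nodes, and since all nodes lie on the rows $\rho(m)$ and on such a row a wire occupies just its own endpoint, no wire runs through any other node. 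Finally the uniform rescaling $(x,y)\mapsto(Mx,My)$ for a large constant $M$, which preserves injectivity and non-crossing, opens up an empty margin of width $M-1$ around every node and wire -- the ``room to spare'' the next section wants -- or one simply takes $\rho$ and the horizontal scale generous from the outset.

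The step I expect to be the main obstacle is turning ``monotone staircases hugging non-crossing segments are themselves non-crossing'' into a clean argument that also keeps the staircases lattice-valued and inside $\mathbb{Z}^{+}\times\mathbb{Z}^{+}$. I would do this with an explicit routing recipe in each band: process the $2^{n+1}$ wires in order of source column (siblings consecutively), let the $j$-th wire perform its entire rightward shift in its own horizontal sub-band of $B_{n}$ -- sub-bands stacked bottom-to-top in that same order -- and climb straight up before and after. Since each wire only ever moves up or to the right, and each wire starts weakly and ends strictly to the right of the previous one, an induction on $j$ shows that no two staircases share a cell except possibly the common source of a sibling pair, which is a node. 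With non-crossing established, injectivity follows from that of $\ell$, the parent--child incidences are precisely the drawn edges, and the full binary tree has been realised in $\mathbb{Z}^{+}\times\mathbb{Z}^{+}$.
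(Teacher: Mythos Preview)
Your approach is correct in outline and genuinely different from the paper's. The paper proceeds by a recursive doubling: having drawn a finite binary tree $T_n$, it places fresh copies of $T_n$ far to the right and joins each leaf of the original to the root of a copy, so the depth roughly doubles at every stage and the only thing to check is that ``far enough to the right'' always leaves room. Your construction instead lays the tree down level by level, with all depth-$n$ nodes on a single row $\rho(n)$ in columns $1,\dots,2^{n}$ via the lexicographic rank $\ell$, and then routes the $2^{n+1}$ parent--child wires through the band between rows $\rho(n)$ and $\rho(n+1)$. The paper's argument is shorter and more pictorial; yours gives explicit coordinates and makes the order-preserving nature of the map $k\mapsto\{2k-1,2k\}$ do the planarity work, which is a clean idea and arguably more reusable.

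One point to tighten: the concrete routing recipe you give (sub-bands stacked \emph{bottom-to-top} in target order) does not quite work. Take the wire from source column $1$ to target column $2$ and the wire from source column $2$ to target column $3$: with your ordering the first climbs column $2$ from sub-band $2$ upward while the second climbs column $2$ from the bottom up to sub-band $3$, and since sub-band $3$ lies above sub-band $2$ they overlap on a whole segment of column $2$. Reversing the stacking to \emph{top-to-bottom} fixes the non-sibling collisions; siblings will still share an initial vertical segment in their common source column, not merely the source cell, so either relax your disjointness claim to ``internally disjoint except along a common initial stem for siblings'' (which is harmless for the intended application) or have the two sibling wires diverge in their very first step. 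None of this affects the validity of your higher-level argument that order preservation of sources and targets guarantees a planar routing; only the explicit recipe needs this small repair.
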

\begin{proof}

Recall that the full binary tree is the tree whose nodes all have exactly two children.

We now describe an algorithm to construct recursively the full binary tree.

For $n=0$, we start the recursion with a single vertex.

For $n=1$, we stack two copies of the same vertex, in red in figure \ref{recursion2}, at a sufficient distance in order to join them to the original vertex.

\begin{figure}[H]
\centering
\includegraphics[width=5cm]{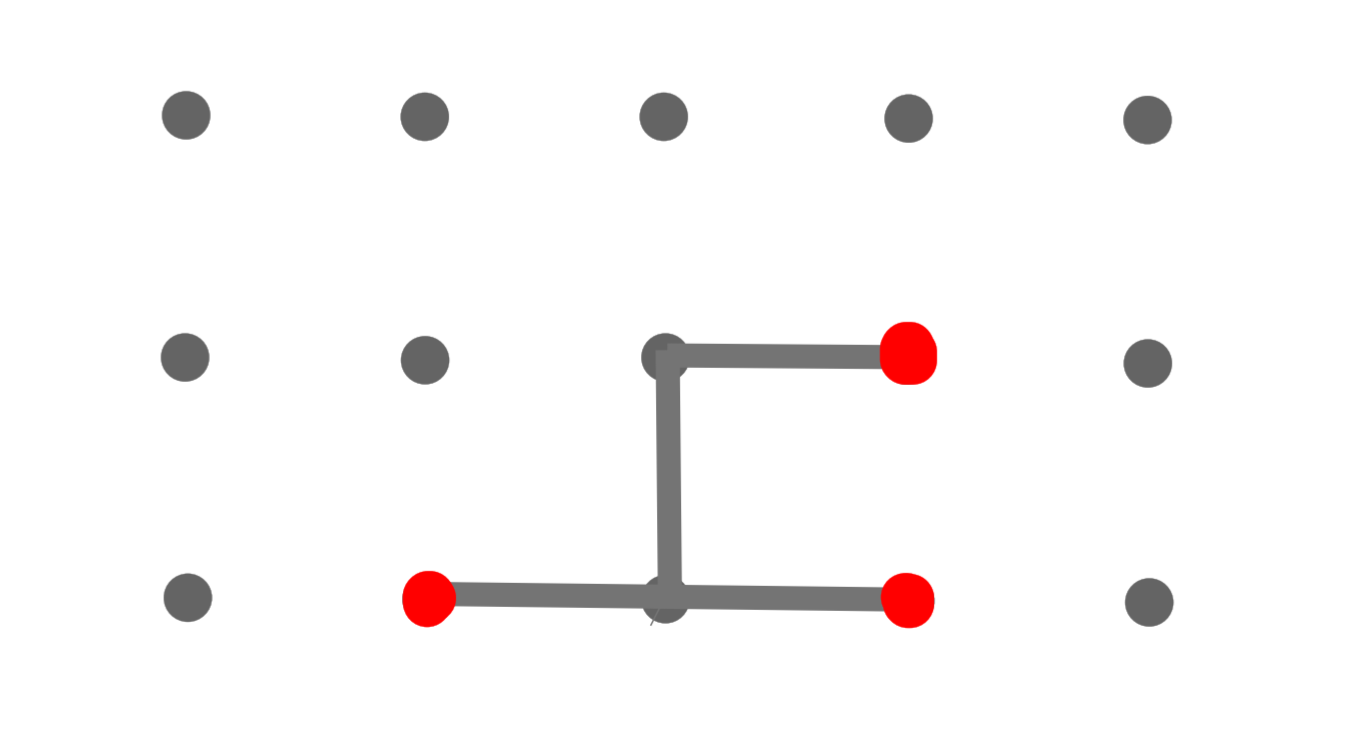}
\caption{First recursive step.}
\label{recursion2}
\end{figure}

We then proceed similarly, given a subtree $T_n$ of $2^{<\omega}$, we construct $T_{n+1}$ by attaching a copy of $T_n$ to each of the leaves of the original $T_n$ we stack two copies of it to the right of the previous subtree at a sufficient distance from the original subtree and join each root of the new copies of $T_n$ to some leaf of the original $T_n$; this is always possible by stacking the new copies of $T_n$ far enough in the positive $x$-direction; this is clear in the second recursive step pictured in figure \ref{recursion3}.

We can see that the trees $T_n$ so constructed are binary and have $2^{2^{n}}$ leaves.

\begin{figure}[H]
\centering
\includegraphics[width=5cm]{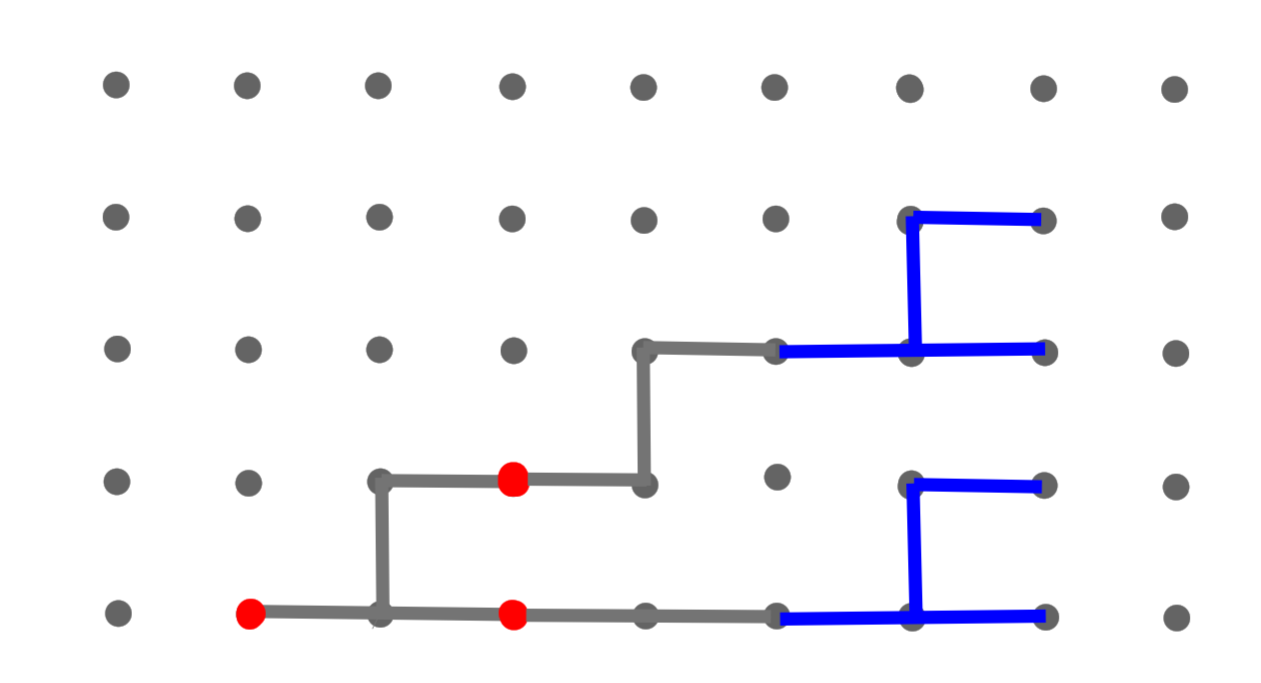}
\caption{Second recursive step.}
\label{recursion3}
\end{figure}
\end{proof}

\begin{remark}
We have just shown that $2^{<\omega}$ can be realised in a quadrant of the integer lattice $\mathbb{Z}\times \mathbb{Z}$; however, at each step of the previous recursion, the new finite tree copies can be placed arbitrarily far in the positive direction of the $x$-axis, so that the whole tree $2^{<\omega}$ can be contained in a cone centered at the origin with an arbitrarily small non-zero angle at the centre.
\end{remark}

We now define an arrangement of pieces on the draughtboard which will be key in the next argument.

\begin{definition}\label{king_tree}
A White \emph{king tree} $T_W$ is an arrangement of White kings on the infinite draughtboard, such that;
\renewcommand{\theenumi}{\roman{enumi}}
\begin{enumerate}
    \item if a Black king $B$ is placed on a distinguished square, the \emph{root node} of $T_W$,  diagonally adjacent to exactly one White king of the king tree, then each king of $T_W$ could be captured via a single jump move made by $B$;
    this ensures that the kings of $T_W$ are suitably alternated to empty squares and $T_W$ is \emph{connected};
    
    \item the \emph{nodes} of $T_W$ are the empty squares of the board on which $B$ could end a, possibly not maximal, jump move; such jump move is unique for each node of $T_W$;
    this ensures that $T_W$ has no \emph{loops}, and so really looks like a tree;
    
    \item there is no empty square of the board diagonally adjacent to 4 kings of $T_W$; so that the \emph{nodes} of $T_W$ have degree at most 3, and so we can say that $T_W$ is a binary tree.
\end{enumerate}
\end{definition}

Following the usual graph-theoretic terminology, we say that a node of a White king tree $T_W$ is a \emph{leaf} if it is the square reached by a Black king that makes a maximal jump move starting from the root of $T_W$.

\begin{theorem}\label{dr_thm}
For any well-founded tree $T$ such that the game of climbing-through-$T$ has value $\alpha$ for Observer, there is an open game $\mathcal{D}_T$ of Infinite Draughts which has value $\alpha$ for White; moreover, any strategy for Climber in the climbing-through-$T$ game determines a strategy for Black in $\mathcal{D}_T$, and vice versa.\footnote{Such strategy correspondence could be made unique by imposing arbitrary conditions on them; it is not an issue in our discussion.}

The initial position of $\mathcal{D}_T$ comprises of a White king tree $T_W$, a Black king on the root of $T_W$, and White kings on all the leaves of $T_W$;
the Black king is placed on the Southernmost non-empty square of the board and Black is the first to move.
\end{theorem}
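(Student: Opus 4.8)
The plan is to build the game $\mathcal{D}_T$ by drawing the tree $T$ directly onto the draughtboard as a White king tree and then showing that, under rules \ref{forced_jump} and \ref{fin_iteration}, Black's only role is to act as Climber while White merely marks time until Black is trapped; the value then drops out of Proposition \ref{tree_value}. Since $\alpha$ may be taken to be countable, $T$ is a countable well-founded tree, and by Proposition \ref{full_binary} (together with the remark that its construction fits inside an arbitrarily thin cone) there is ample room in a quadrant of the board to lay out a White king tree $T_W$ in the sense of Definition \ref{king_tree} whose underlying combinatorial tree is $T$. Because an infinite well-founded tree must have a node of infinite degree (a finitely branching well-founded tree is finite, by K\"onig's lemma), such nodes cannot be realised as a single branching square; I would realise each of them by a dedicated gadget --- for instance an infinite ladder of White kings with the king tree of the $k$-th child subtree attached at the $k$-th rung, so that Black's choice among infinitely many children becomes a choice of how far to iterate a jump, with no option of stopping on a bare rung. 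We then place a Black king on the root of $T_W$, White kings on and around the leaves of $T_W$ so that a Black king forced onto a leaf has no legal move at all, and we reserve an untouched region of the board in which White can always make a harmless waiting move.

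Next I would set up the move dictionary between $\mathcal{D}_T$ and climbing-through-$T$. Whenever the Black king sits on a non-leaf node of $T_W$, the forced-jump rule \ref{forced_jump} compels Black to jump up the tree and forbids any stalling or sideways escape, while the finite-iteration rule \ref{fin_iteration} forbids the infinite iterated jump that would let Black run off to infinity, exactly as in Figure \ref{dr_inf_jump}; thus a single Black move advances the king by one level of $T$ (the ladder gadget letting Black select any child of an infinite-degree node). Between Black's moves White has no jump available and simply shuffles a king in the reserved region, which changes nothing relevant; and once Black is driven onto a leaf the surrounding White kings leave Black no move, so White has won after finitely many moves. Up to this parity bookkeeping, a play of $\mathcal{D}_T$ is literally a play of climbing-through-$T$ with Black as Climber and White as Observer, so each Climber strategy reads off as a Black strategy and conversely, and Observer's trivial winning strategy corresponds to White's ``wait, then let Black stall up the tree and lose'' strategy. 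Since $T$ is well-founded, Black must reach a leaf after finitely many moves, so $\mathcal{D}_T$ is genuinely open for White.

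Finally, the value of $\mathcal{D}_T$ is computed by transfinite induction on $\mathrm{rank}(T)$, paralleling the proof of Proposition \ref{tree_value}: one shows that the position of $\mathcal{D}_T$ in which it is Black's turn and the Black king stands on a node $v$ has game value $\mathrm{rank}(v)$ for White. At a leaf Black is ultimately immobilised and White has won (value $0$). At a non-leaf $v$, a jump by Black to a child $w$ of rank $\beta$ leads to a White-to-move position whose only options are waiting moves, hence of value $\beta+1$ by the inductive hypothesis; since the construction forbids White from capturing the Black king anywhere but a leaf, these exhaust Black's follow-up positions, and Definition \ref{def_game_value} gives value $\sup\{\beta+1\}$ over the children of $v$, which is exactly $\mathrm{rank}(v)$ (covering the successor and limit cases uniformly). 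Taking $v$ to be the root yields $\mathrm{value}(\mathcal{D}_T)=\mathrm{rank}(T)=\alpha$, which is the theorem; combined with Lemma \ref{lemma_climb_tree}, Proposition \ref{omega_one_climb} and Corollary \ref{bound_omega_one_class} this will give $\omega_1^{\mathrm{Draughts}}=\omega_1$.

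The main obstacle is the geometry of the king tree: one must check that an arbitrary countable well-founded $T$ --- in particular its infinite-degree nodes --- can be embedded as a White king tree satisfying Definition \ref{king_tree} in such a way that (i) no White king can jump-capture the Black king until Black has been forced onto a leaf, so White cannot win prematurely; (ii) from every non-leaf node Black has a legal jump but no legal non-jump move, so Black can neither stall nor flee; and (iii) on reaching a leaf the Black king is completely blocked. Pinning down these incidence requirements, and in particular making the ladder gadget for infinite branching interact correctly with rules \ref{forced_jump} and \ref{fin_iteration}, is where the real work lies; granted the resulting move dictionary, the openness claim and the transfinite induction above are routine.
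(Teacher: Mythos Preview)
Your outline misses the central device of the paper's construction. You write that between Black's jumps White ``simply shuffles a king in the reserved region'', and in the value computation you assert that the resulting White-to-move position has ``only waiting moves''. But White is under no obligation to play in your reserved region: on its turn White may move any of its kings, and in particular it may move the White king lying between Black's current square and Black's intended child. That single non-capturing move severs an edge of the king tree ahead of Black. If $T$ is, for instance, a finite path of length $n$, then after Black's first jump White removes the unique forward king and Black is already stuck; the value collapses from $n$ to a small constant, and the induction never gets off the ground. Your listed obstacles (i)--(iii) concern premature capture, stalling, and fleeing, but none of them rules out this tree-editing behaviour by White, which is exactly what distinguishes White from Observer.

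The paper handles this by building redundancy into $T_W$ rather than by adding an external waiting region: at every resting square it attaches \emph{two} isomorphic copies of the king tree for the relevant subtree. After Black lands on a resting square White's single move can spoil at most one of the two copies, and Black then jumps into the other, so the inductive hypothesis applies to an intact copy. This duplication is precisely what renders every White move strategically equivalent to Observer's ``OK'' and is the idea that carries the value computation $\delta\mapsto\delta+1$ through. A single-copy embedding of $T$ with a separate pool of harmless White moves does not achieve this, because White will simply ignore the pool.
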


\begin{proof}
We will prove by transfinite induction on the rank $\alpha$ of the tree $T$ that there exists a game $\mathcal{D}_T$ of Infinite Draughts as described.

\medskip

If $\alpha = 0$, then White has already won and the empty board can be the trivial initial position of $\mathcal{D}_T$; Black loses by being the first player.

\medskip

If $\alpha = 1$, then the  children of the root node $v_0$ of $T$ are all leaves.
If $v_0$ has only one child, then we can choose the position in figure \ref{dr_value_1} as the initial position of $\mathcal{D}_T$; Black must make an initial jump move that ends on the square labelled with a dagger, and his only piece is then captured by White in the following turn.

Observe that figure \ref{dr_value_1} also shows what the leaves of the White king tree embedded in the draughtboard look like.

\begin{figure}[H]
\centering
\includegraphics[width=3.5cm]{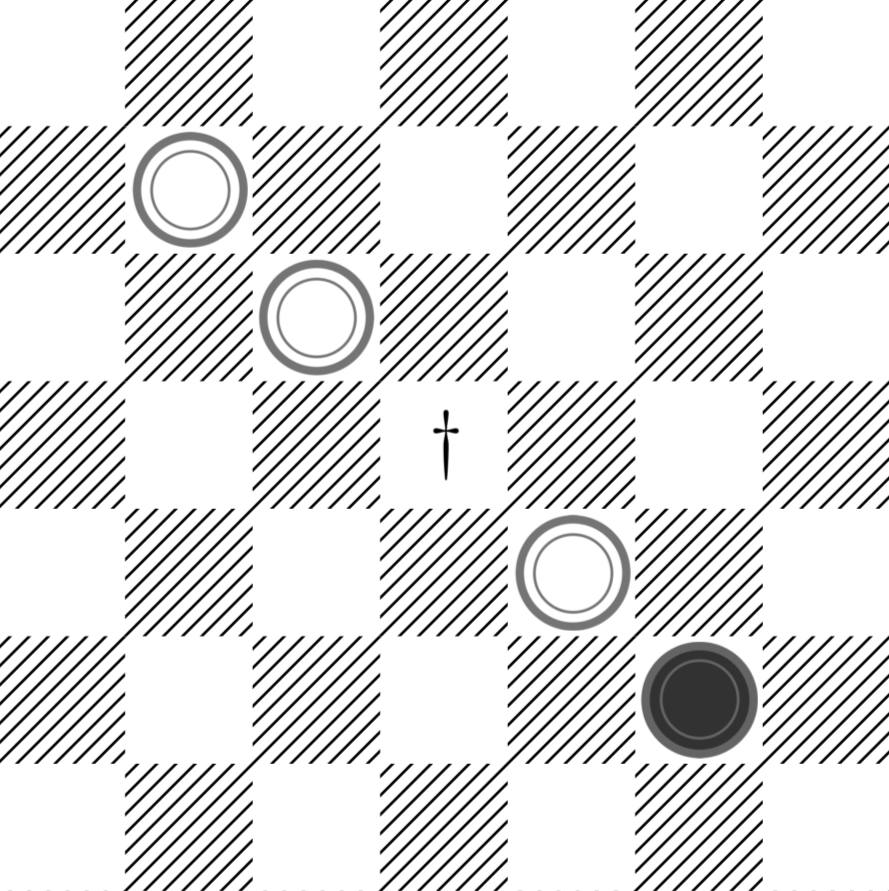}
\caption{Position with value 1 for White, Black to move.}
\label{dr_value_1}
\end{figure}

Similarly, if the root node of $T$ has any other finite number of children, we can iterate this construction; see figure \ref{dr_3_branch} for the initial position of $\mathcal{D}_T$ when $v_0$ has exactly 3 children.

\begin{figure}[h]
\centering
\includegraphics[width=8cm]{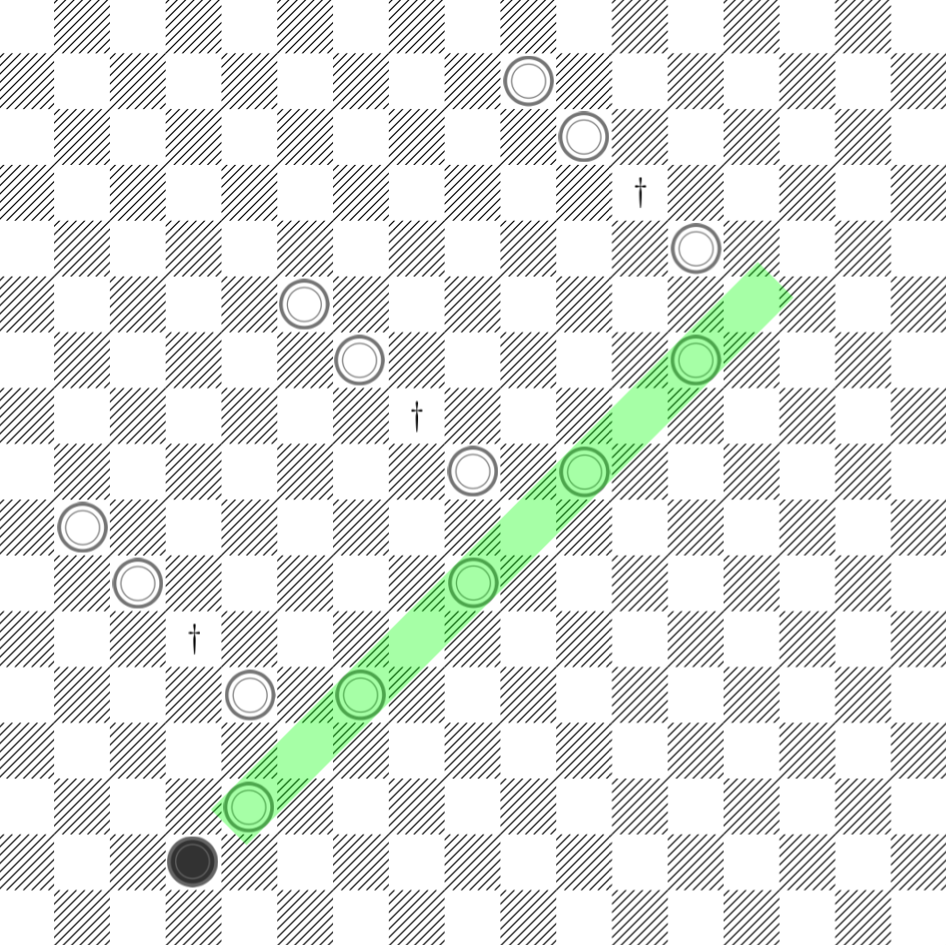}
\caption{Black to move and choose one of 3 jump moves; value 1 for White.}
\label{dr_3_branch}
\end{figure}

Furthermore, if $v_0$ has (countably) infinitely many children, then we can let the initial position of $\mathcal{D}_T$ be as in figure \ref{dr_inf_branch}.

\begin{figure}[h]
\centering
\includegraphics[width=10cm]{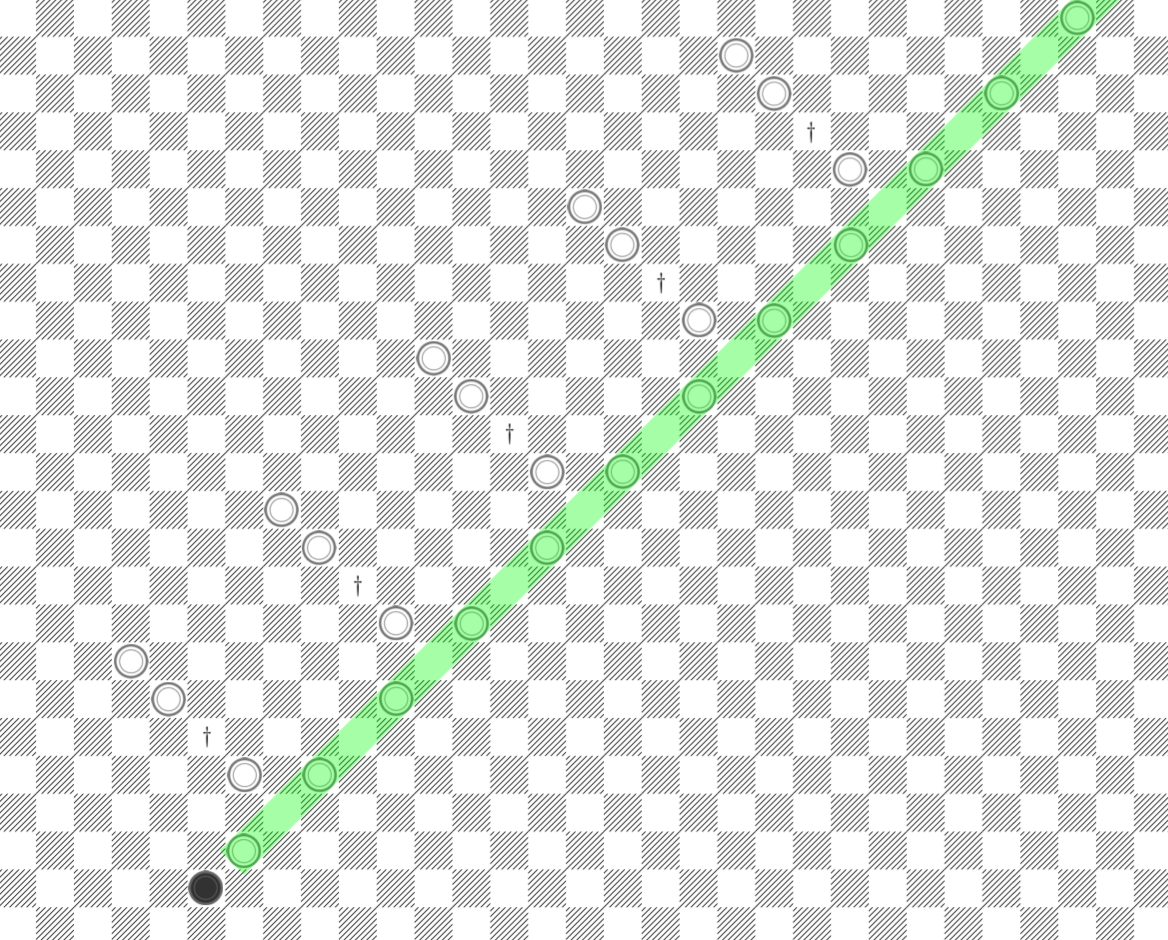}
\caption{Position with value 1 for White; Black to move.}
\label{dr_inf_branch}
\end{figure}

Observe that, in the last 3 figures presented, the Black king can only make legal maximal jump moves ending on the squares labelled with daggers.

In particular, in figure \ref{dr_inf_branch}, Black must choose one of such squares; in fact, rule \ref{fin_iteration} prevents the Black king from making an infinitely iterated jump and thus remaining on the White ladder highlighted in green.

We can see that, given a strategy for Climber, that is a choice of a child node of $v_0$, we can determine a strategy for Black in $\mathcal{D}_T$, that is a choice of jump move to the corresponding square labelled with a dagger.

\medskip

Here, the fundamental observation is that we can see the initial position of $\mathcal{D}_T$, in which the Black king stands at the root node of the king tree, as corresponding to the initial position of the climbing-through-$T$ game, in which Climber stands on the root of $T$;
moreover, we can see the Black king reaching a square labelled with a dagger as corresponding to the Climber moving up to the relevant node.

Hence, this construction allows us to determine the strategy correspondence of the Theorem.

\medskip

Now, suppose that for all trees of rank $\beta<\alpha$ there are games of Infinite Draughts with value $\beta$ as described.\footnote{Recall that the rank of $T$ equals the game value of climbing-through-$T$ by Proposition \ref{tree_value}.}

First, suppose for simplicity that the root $v$ of $T$ has a single child $v'$;
if the maximal sub-tree $T'$ of $T$ with root $v'$ has rank $\delta$,\footnote{$T'$ is maximal in the sense that its vertex set is the same as the one of $T$, except for $v$.}
then $T$ has rank $\delta+1= \alpha$.

By induction, there exists some Infinite Draughts game $\mathcal{D}_{T'}$ of value $\delta$ as described by the Theorem;
let $\widetilde{T}'_W$ be the arrangement of White kings at the initial position of $\mathcal{D}_{T'}$, so that $\widetilde{T}'_W$ consists of a White king tree $T'_W$ with White kings on the leaf nodes.

Let the initial position of $\mathcal{D}_{T}$ be as in figure \ref{dr_ind_step}, in which we embed two copies of $\widetilde{T}'_W$, so that each distinct copy of the king tree $T'_W$ has one of the two starred squares as a root node.

Observe that this initial position of $\mathcal{D}_{T}$ is constructible thanks to Proposition \ref{full_binary}.
The full binary tree can be embedded in a quadrant of the integer lattice and the king tree $T'_W$ is binary, so the position of figure \ref{dr_ind_step} can be constructed so that the Black king stands on the Southernmost non-empty square of the draughtboard.
Observe that the embedded copies of $T'_W$ may have more White kings than the original arrangement because Proposition \ref{full_binary} ensures constructibility, provided flexibility in the length of edges;
this is not an issue because it is never advantageous for the Black king to end jump moves on nodes of the king tree which are not binary,\footnote{According to our definition of king tree, all the empty squares between white kings are \emph{nodes}; however, the not properly branching nodes cannot be resting squares.}
such ``stretched" copies of $T'_W$ will be equivalent to the original $T'_W$ with respect to the possible strategies available to Black.

In other words, Proposition \ref{full_binary} allows us to embed in the draughtboard king trees with the same graph-theoretic features of the original $T'_W$.

\begin{figure}[h]
\centering
\includegraphics[width=8cm]{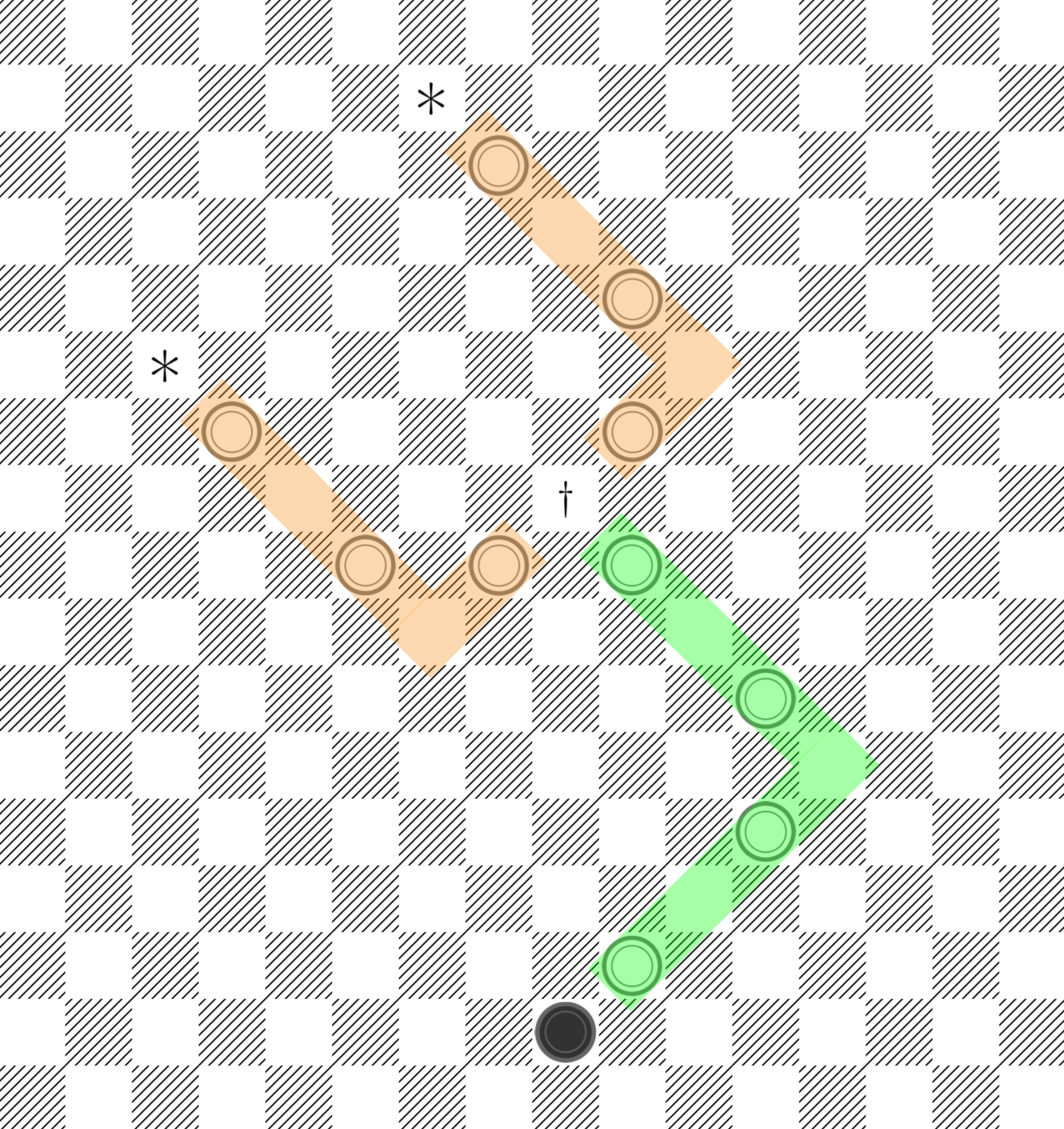}
\caption{Inductive step.}
\label{dr_ind_step}
\end{figure}
\bigskip

We can now describe the main line of play of Black, who is the first to move in $\mathcal{D}_{T}$.

The initial move by black is a jump move to the tile labelled with a dagger; we call it a \emph{resting} square because the Black king can stop there without being captured by White at the next turn.\footnote{Observe that the Black king reaches the resting square by jumping over a White piece, hence it ends the move being adjacent to exactly 2 White pieces.}

Then, we can see that the Black king is standing on the root of two White king trees, highlighted in orange, which each contain a copy of $T'_W$; at the next turn, White can only move one piece from either of such king trees.\footnote{Or one of the kings on the leaves of such king trees.}

At the next turn, Black can let his king make a jump move up the king tree whose pieces were not moved by White, and then play as in $\mathcal{D}_{T'}$.

Since Black can then play in $\mathcal{D}_{T'}$ so to realise the game value $\delta$, then the strategy just described realises the value $\delta+1= \alpha$.
In fact, if the Black king does not stop on the described resting square, then it could stop on a non-resting square, leading to Black's defeat on the next turn, or move past the first resting square and stop on some resting square higher up in one of the embedded copies of $\mathcal{D}_{T'}$; however, recalling the correspondence with the climbing-through-$T$ game, such a move would correspond to the Climber climbing up more than one edge per turn, causing a decrease in game value larger than necessary.
Moreover, observe that White's moves cannot really obstruct Black in the same way that the Observer does not obstruct the Climber by saying ``OK".

Hence, the main line of play for Black describes a well-defined strategy.

\bigskip

We have now all that we need to complete the proof; if the root of the tree $T$ has finitely many children, then we can iterate the previous construction so that, for each child $v$ of the root of $T$, letting $T_v$ be the maximal sub-tree of $T$ rooted at $v$, we can have, for each $v$, a distinct resting square which is the root of two White king trees, both realising the White king tree of $\mathcal{D}_{T_v}$.

It should be clear that we can inductively construct the initial position of $\mathcal{D}_{T}$ when the root of $T$ is infinitely branching, as in figure \ref{dr_inf_DT}; a position for $\mathcal{D}_{T}$ when the root of $T$ has finitely many children can be obtained by truncation of the green branch.

\begin{figure}[h]
\centering
\includegraphics[width=10cm]{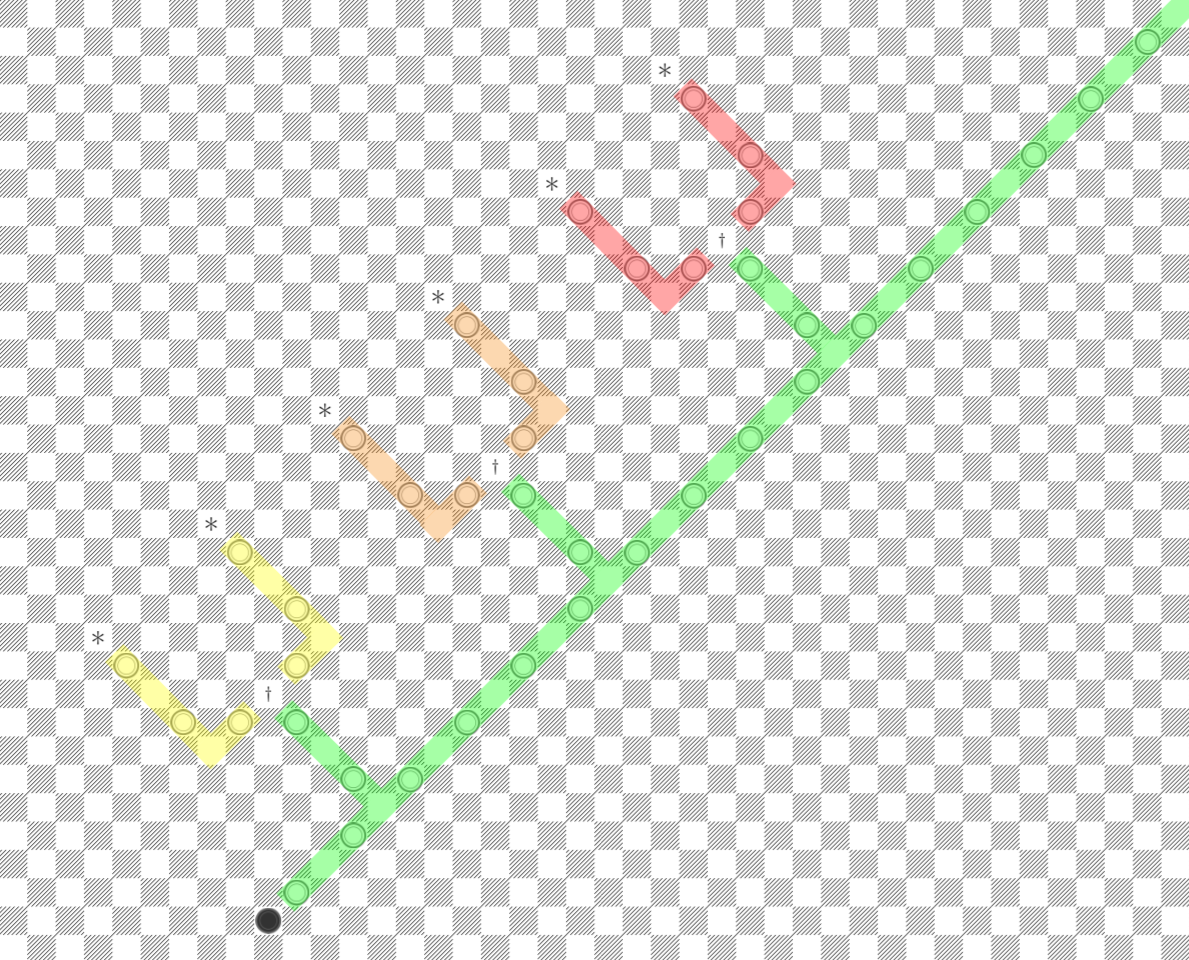}
\caption{$\mathcal{D}_{T}$ when the root of $T$ is infinitely branching.}
\label{dr_inf_DT}
\end{figure}

Observe that the discussion following figure \ref{dr_ind_step} regarding the correspondence between the resting squares of the initial position of $\mathcal{D}_{T}$ and the nodes of $T$ in climbing-through-$T$ easily extends to the finite and infinite branching positions of figures \ref{dr_ind_step}-\ref{dr_inf_DT},
with the only difference that White may respond to Black's moves by moving pieces which Black does not intend to capture.\footnote{In other words, White could move pieces in irrelevant far branches of the king tree.}

\bigskip

The proof is now complete.
\end{proof}

Observe that Theorem \ref{dr_omega_one} is a direct consequence of Theorem \ref{dr_thm} and Proposition \ref{omega_one_climb};
we can appreciate that Theorem \ref{dr_thm} is a stronger result because it does not just prove the existence of Infinite Draughts games of arbitrarily high (countable) value, but also constructs them explicitly.

\subsection{Further results}\label{draughts_further_results}
\subsubsection{Computable positions}

Note that we can define a position of Infinite Draughts to be a function that assigns to each white square of the chessboard a value to mark it as free or occupied by a piece, specifying if it is a pawn or a king and whether it is Black or White, in addition to specifying a turn indicator.

It is then immediate to consider computable positions as computable functions.

\medskip

The following result is an adaptation of \cite[Theorem 6]{hamkins14} and is a consequence of Theorem \ref{dr_thm}.

\begin{corollary}\label{dr_computable}
There is a computable position of Infinite Draughts which is a win for a designated player if both players are constrained to play according to computable strategies, and is a draw otherwise.
\end{corollary}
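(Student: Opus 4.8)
The plan is to feed the construction of Theorem \ref{dr_thm} an \emph{ill-founded} tree instead of a well-founded one. By a classical construction in computability theory (the Kleene tree), there is a computable subtree $T\subseteq 2^{<\omega}$ which is infinite --- hence, being finitely branching, has an infinite branch by K\"onig's lemma --- but such that no infinite branch of $T$ is computable (equivalently, $[T]$ is a nonempty $\Pi^0_1$ class with no computable member). Fix such a $T$; since it is a subtree of $2^{<\omega}$ it is in particular binary, so it is of the shape handled by the king-tree machinery.

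Next I would run the \emph{geometric} part of the proof of Theorem \ref{dr_thm} on $T$. Although that proof is organised as a transfinite recursion on the rank of a well-founded tree, the initial position it produces depends only on the finite-branching structure of the input tree, and it is built by a computable procedure: embed $2^{<\omega}$ into $\mathbb{Z}^+\times\mathbb{Z}^+$ by the recursion of Proposition \ref{full_binary}, realise $T$ inside it as a White king tree $T_W$, place White kings on the leaves of $T_W$, and put a Black king on the root of $T_W$, taken to be the Southernmost non-empty square. Since $T$ is computable, this assignment of pieces to squares is computable, so the resulting position of $\mathcal{D}_T$ is a computable position in the sense defined above. Moreover $\mathcal{D}_T$ is open for White, since any win for White consists of capturing the unique Black king, which happens after finitely many moves.

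Then I would check the two regimes. With unrestricted play the game is a draw: Black fixes an infinite branch of $T$ (one exists by K\"onig's lemma) and, exactly as in Theorem \ref{dr_thm}, climbs the king tree along the corresponding sequence of resting squares forever, so his king is never captured and White never wins; conversely White always has a legal move (there are infinitely many White kings available to shuffle), so Black never wins either. If instead both players are restricted to computable strategies, let White play the computable strategy ``if the Black king currently rests on a leaf of $T_W$, capture it and finish the game; otherwise make a trivial move shuffling a far-away White king.'' Against any computable Black strategy the resulting play is computable, so the path in $T$ traced by the successive resting squares of the Black king is a computable branch of $T$, hence finite; therefore the Black king reaches a leaf of $T_W$ after finitely many moves (or sooner steps onto a non-resting square and is captured even faster), and White wins. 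Since White's strategy is computable, this gives the asserted behaviour, with White the designated player.

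The point that needs care --- and which is already settled inside the proof of Theorem \ref{dr_thm} --- is the faithfulness of the correspondence between Black's legal play in $\mathcal{D}_T$ and climbing in $T$: one must verify that the squares on which the Black king can rest without immediate capture are exactly the (properly branching) nodes of $T_W$, that consecutive resting squares move strictly upward in the tree order (Black has only one king, so by Rule \ref{forced_jump} he must keep jumping, and the kings he has already jumped over are gone), and that Rule \ref{fin_iteration} forbids Black from escaping to infinity along a ladder in a single move, so that an infinite play genuinely yields an infinite branch of $T$ whose computability is inherited from the play. One also notes that White's filler moves happen in irrelevant far branches of the king tree and so never obstruct this analysis. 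Granting these facts, the corollary follows, paralleling \cite[Theorem~6]{hamkins14}.
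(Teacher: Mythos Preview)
Your proposal is correct and follows essentially the same route as the paper: take a computable Kleene tree $T\subseteq 2^{<\omega}$ with no computable infinite branch, apply the king-tree construction of Theorem~\ref{dr_thm} to it (noting, as the paper does, that the construction goes through even when $T$ is ill-founded), and then argue the two regimes via the strategy correspondence with climbing-through-$T$. Your write-up is, if anything, slightly more explicit than the paper's in checking that the position is computable and that unrestricted play is a genuine draw (you note White always has a legal move, which the paper leaves implicit).
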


We first recall a classic graph-theoretic result.

\begin{lemma}[K\"onig's Lemma]\label{konig_lemma}
Any infinite finitely-branching tree has an infinite branch.
\end{lemma}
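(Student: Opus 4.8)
The plan is to prove König's Lemma by building an infinite branch one vertex at a time, using a finite pigeonhole argument at each step. First I would introduce, for a vertex $v$ of the tree $T=(V,E,v_0)$, the set $D_v$ consisting of $v$ together with all vertices whose unique path from the root $v_0$ passes through $v$, and call $v$ \emph{fat} if $D_v$ is infinite. Since $T$ is assumed infinite, the root $v_0$ is fat, so there is at least one fat vertex to start from.

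The key observation is that every fat vertex has a fat child. Indeed, if $v$ has children $v_1,\dots,v_k$ (finitely many, by the finite-branching hypothesis), then $D_v=\{v\}\cup D_{v_1}\cup\cdots\cup D_{v_k}$; a finite union of finite sets is finite, so if $D_v$ is infinite then some $D_{v_i}$ must be infinite, i.e.\ some child $v_i$ is fat.

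I would then build a sequence $v_0,v_1,v_2,\dots$ by recursion: $v_0$ is the root, and given a fat vertex $v_n$, let $v_{n+1}$ be a fat child of $v_n$ --- and, to avoid invoking any choice principle, take $v_{n+1}$ to be the fat child with least label under the canonical ordering of $\omega^{<\omega}$ inherited via the embedding $\widetilde\iota$. By the observation above such a child always exists, so the recursion never terminates, and an immediate induction shows that every $v_n$ is fat. Since each $v_{n+1}$ is a child of $v_n$, the vertices $v_0\rightarrow_T v_1\rightarrow_T v_2\rightarrow_T\cdots$ form a path from the root of order type exactly $\omega$, which is precisely an infinite branch of $T$.

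The argument is entirely routine and I do not expect a genuine obstacle; the one point worth stating carefully is the pigeonhole step (a finite union of finite sets is finite), together with the remark that finite branching is exactly what makes the choice of $v_{n+1}$ canonical, so no form of the Axiom of Choice is needed here, in contrast with some constructions elsewhere in this work. It is also worth noting in passing that the contrapositive shows a finitely-branching tree is well-founded if and only if it is finite, which is consistent with Remark \ref{rmk_value_tree}.
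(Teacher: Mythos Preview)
Your argument is correct and is the standard proof of K\"onig's Lemma: track vertices with infinitely many descendants, use finite branching plus pigeonhole to show such a vertex always has a child of the same kind, and recurse. The care you take to avoid choice via the embedding into $\omega^{<\omega}$ is a nice touch and fits the conventions of the paper.

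There is nothing to compare against, however: the paper does not prove this lemma. It is introduced with the phrase ``We first recall a classic graph-theoretic result'' and then merely stated; the \texttt{proof} environment that follows it is explicitly labelled as the proof of Corollary~\ref{dr_computable}, not of the lemma. So your write-up goes beyond what the paper itself supplies.
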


\begin{proof}
[Proof of Corollary \ref{dr_computable}]

Let $T\subset 2^{<\omega}$ be a computable infinite tree with no computable infinite branch; such trees exist by elementary results of computability theory.

Since $T\subset 2^{<\omega}$ is clearly finitely-branching, then $T$ is an infinite finitely-branching tree, and so $T$ has an infinite branch by K\"onig's Lemma; such branch is not computable by definition of $T$.

Even though $T$ is not well-founded, it still has branches of order type at most $\omega$, so that we can apply the construction in the proof of Theorem \ref{dr_thm} to construct a game $\mathcal{D}_T$ of Infinite Draughts equivalent to the climbing-through-$T$ game.

\medskip

It is clear that Climber has a winning strategy for the climbing-through-$T$ game, i.e.~climbing up an infinite branch of $T$; however, $T$ has only non-computable infinite branches, so such strategy for Climber must be not computable.

By the strategic equivalence of climbing-through-$T$ with $\mathcal{D}_T$, we have that also Black has a non-computable drawing strategy in $\mathcal{D}_T$.

\medskip

However, if Climber plays according to a computable strategy in climbing-through-$T$, then he can only climb up a computable branch of $T$, which must be finite by definition of $T$.
Correspondingly, if Black plays according to a computable strategy, then he will eventually reach in finitely many turns a leaf of the king tree in $\mathcal{D}_T$;
in such case, Black reaches an arrangement of White kings as in figure \ref{dr_value_1} and thus loses.

It is clear that any strategy is winning for White, as it is enough to require that White captures the Black king at the first mistake of Black, and make any other uninfluential move, otherwise;
hence, we can say that White has a computable strategy which allows him to win in $\mathcal{D}_T$ when Black plays according to any computable strategy.

This completes the proof.
\end{proof}

\subsubsection{Tree nodes for different rules}
We now focus on extending the previous results to games of Infinite Draughts that assume a less strict set of rules.

In particular, we will design nodes for the White king tree constructed in Theorem \ref{dr_thm} with more structure than the one in figure \ref{dr_ind_step}.

All the positions in this chapter were kindly reviewed by Sergio Scarpetta, current World Champion of English Draughts (3-move variant), who confirmed the interpretations presented here, in particular the one regarding figure \ref{big_7_branch_node}.

\medskip

We observe that by allowing nodes to have fixed finite size in Proposition \ref{full_binary}, we conclude that the full $n$-ary tree $n^{<\omega}$ can be embedded in the integer lattice $\mathbb{Z}\times \mathbb{Z}$.

\medskip

Now that we have ensured to have enough space on the infinite draughtboard, we can present the node needed when dropping the forced jump rule \ref{forced_jump}, but maintaining the finite iteration \ref{fin_iteration}.

Firstly, we need to ensure that the Black king actually starts jumping on the White king tree, so that the root of the king tree needs two extra \emph{guardian} White kings, as in figure \ref{root_not_forced}, where Black immediately loses by making an initial non-jump move; this was not needed in the previous root configuration.

\begin{figure}[h]
\centering
\includegraphics[width=3cm]{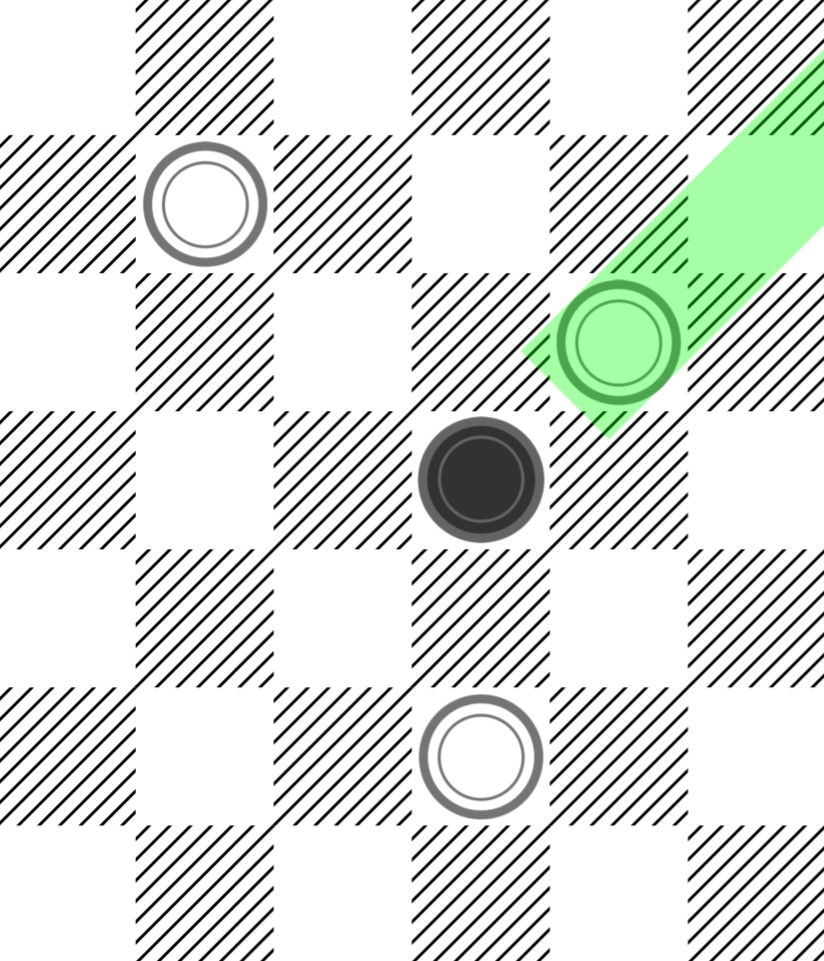}
\caption{Root with guardians.}
\label{root_not_forced}
\end{figure}

Even though the Black king will still find it disadvantageous to stop on non-resting squares, we still need to ensure that the Black king does not get \emph{off-track}, that is out of the king tree, when making a move from a resting square;
that is exactly why we add two guardian White kings to the king tree branching node, which are highlighted in yellow in figure \ref{node_guardians}.

\begin{figure}[h]
\centering
\includegraphics[width=8cm]{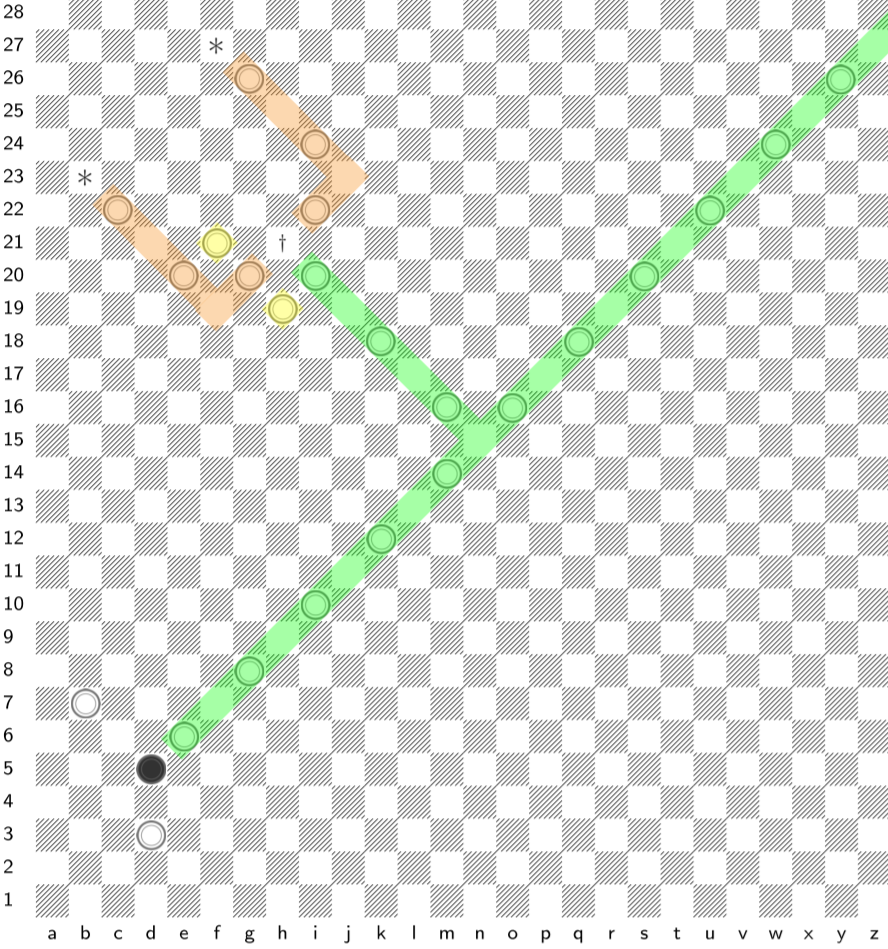}
\caption{Branching node with guardians.}
\label{node_guardians}
\end{figure}

\begin{remark}\label{rmk_off_track}
Observe that when the Black king is on some resting square, White could move one of the guardian kings to a square adjacent to the Black king, offering it the possibility to go off-track.

The interesting thing to note is that White would make such a move only when, following the Black king going off-track, White could reach a position of value lower than the ones reachable by allowing Black to play as usual;
such a situation means exactly that it is not advantageous for Black to go off-track.

This is enough to say that, with optimal play, Black will not capture a guardian king moved by White.
In other words, White moves guardian kings only when that is uninfluential, exactly as the Observer says ``OK" in the climbing-through-$T$ game.
\end{remark}

\bigskip

Recalling that we stated rule \ref{fin_iteration} in order to avoid discussing infinitely iterated jumps, we are now ready to drop it and only assume the forced iteration rule \ref{forced_iteration} with its strict interpretation in Infinite Draughts.

In other words, we now assume that players are not obliged to make jump moves, but, if they do make a jump move, then they can only make maximal jumps, which include infinitely iterated jumps.

This complicates the structure of the nodes of, what we should now call, \emph{extended king tree} constructed as in the proof of Theorem \ref{dr_thm};
in fact, we will require the Black king to reach a resting square and possibly make a non-jump move, before continuing to climb the extended king tree.

Observe in figure \ref{big_7_branch_node} the first node that the Black king can decide to pick;
it is a 7-branching node, so that each branch leads to an equivalent extended tree, as for the binary case in the original argument.

\begin{figure}[h]
\centering
\includegraphics[width=10cm]{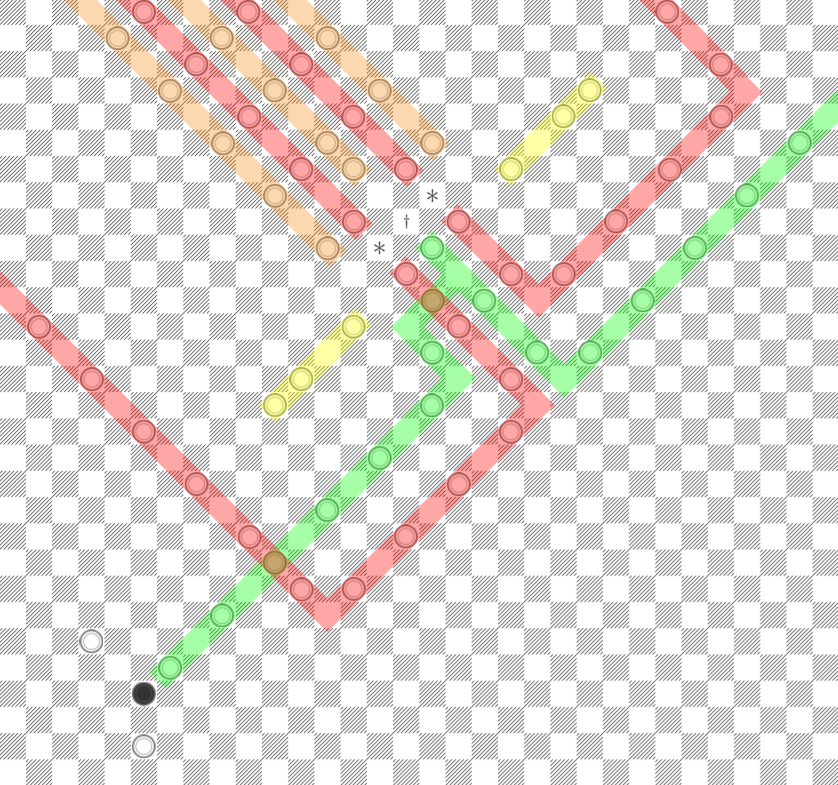}
\caption{7-branching node.}
\label{big_7_branch_node}
\end{figure}

\begin{remark}
It may be possible to further simplify the node in figure \ref{big_7_branch_node}; however, the one presented is likely to be the minimal node when reintroducing the forced jump rule.
\end{remark}

Observe that the branches are coloured red or orange depending on their row parity.

Suppose that the Black king decides to pick that first node, rather than continuing on the green ladder, and so ends the initial jump move on the resting square labelled with a dagger.


The main line of play for Black is as follows. After White's non-jump move, move the Black king to the starred square adjacent to an external triplet of branches which was not modified by White; White can then move a piece from only one of the red branches next to the Black king, which can then make a jump move to climb up the other, going to another node.

\medskip

There are several ways in which White could interfere with this strategy, and we will see that they are uninfluential in terms of reducing the game value.

\medskip

Even though players are not forced to make jumps when legal, White might offer the possibility of a jump move to Black.

Recalling that jumps must be maximal, we have that, if White moves a king so that Black can make a jump towards North-West, then Black can decide to deviate from the main line of play to go up the orange branch, in which case Black leaves the node reducing the value by 1 rather than 2, which is still a positive outcome for Black.

White could also offer possible jump moves towards NE or SW by moving a White king to a starred tile; if Black takes up the offer, then he simply jumps up the relevant external orange branch.

Moreover, if White moves a king to the South-East of the Black king, then it is likely to be advantageous for Black to make such a jump move and go to choose another node up the green branch;
this would correspond to the Observer allowing the Climber to go back down to the tree root and make a different choice.

\medskip

Following White's response to Black's initial move, Black may decide to move his king to a non-starred square.

If the Black king is moved to the North-West of the resting square, then it would be captured by White on the next move.

If instead the Black king is moved towards South-East, that is ``backwards", then White can place a king to the North-West of the (original) resting square, and then we can see that the Black king is trapped, also thanks to the guardian kings highlighted in yellow.

\medskip

Finally, the only other option available to White when Black is standing on a starred square is moving the piece at the bottom of the external orange branch next to the Black king;
by an analogous argument to Remark \ref{rmk_off_track}, we have that White would make that move only if that does not affect Black's play.

\medskip

With this, we conclude our discussion of Infinite Draughts.

\newpage
\appendix
\chapter{Appendix}
\section{2-colourability in Maker-Breaker games}\label{app_2_col_mb}

We prove the following result, which is adapted for any generalised hypergraph $(B,\mathcal{F})$ from \cite[Proposition 2.3.1]{hefetz14}.

\medskip

\begin{proposition}
Take some Maker-Breaker game $\mathcal{G}$ played on a hypergraph $(B,\mathcal{F})$ such that $\mathcal{G}$ is open for Breaker, and Maker is the first player to move.\footnote{Note that this argument relies on the fact that Breaker's winning strategy is a second-player strategy.}
If Breaker has a winning strategy, so that he can play to take at least one vertex in each of Maker's winning subsets in $\mathcal{F}$, then $(B,\mathcal{F})$ is properly 2-colourable.
\end{proposition}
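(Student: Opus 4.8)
The plan is to extract a proper 2-colouring of $(B,\mathcal{F})$ directly from Breaker's winning strategy $\tau$, by the classical device of \emph{playing $\tau$ against itself}. Since $\tau$ is winning and Breaker wins a Maker--Breaker game precisely by having claimed at least one vertex of every winning set, the outcome of any play of $\mathcal{G}$ conforming to $\tau$ is that Breaker's final set of claimed vertices is a \emph{transversal} of $\mathcal{F}$. The idea is to run two such plays in parallel, coupled so that within one of them \emph{both} players end up owning a transversal of $\mathcal{F}$, after which the colouring writes itself.

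Concretely, I would construct two plays $P_1$ and $P_2$ of $\mathcal{G}$, in both of which Breaker follows $\tau$, interleaving their turns so that the moves made by Maker in $P_1$ copy those made by Breaker in $P_2$, and symmetrically the moves made by Maker in $P_2$ copy those of Breaker in $P_1$. The construction is started by letting Maker in $P_1$ make an arbitrary first move; here one uses that Maker is the first player, so Breaker's strategy is a second-player strategy and each of $P_1,P_2$ is genuinely a play conforming to $\tau$. Whenever the move to be copied has already been claimed in the play doing the copying, the copying player instead makes an arbitrary legal move; such \emph{filler} moves do no harm, since $\tau$ wins against every Maker play regardless (and, as with the extra-stone property of Remark \ref{rmk_finite_hex}.\ref{extra_stone}, claiming extra vertices never hurts their claimer). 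Write $M_1,X_1$ and $M_2,X_2$ for the sets of vertices claimed in $P_1$ and $P_2$ by Maker and by Breaker, respectively.

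Since both $P_1$ and $P_2$ conform to the winning strategy $\tau$, both $X_1$ and $X_2$ are transversals of $\mathcal{F}$. The coupling is designed to guarantee $X_2\subseteq M_1$: every Breaker move of $P_2$ is subsequently copied by Maker in $P_1$, or was already owned by Maker there; and such a vertex can never have been claimed by Breaker in $P_1$, since Breaker's moves in $P_1$ are in turn copied into $M_2$, which is disjoint from $X_2$ (one checks this by maintaining the invariants $M_i\cap X_i=\emptyset$, $X_1\subseteq M_2$ and $X_2\subseteq M_1$ along the interleaved construction). Hence $M_1$ is also a transversal of $\mathcal{F}$, and $M_1\cap X_1=\emptyset$ since opponents never claim the same vertex. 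Now colour $v$ with colour $1$ if $v\in M_1$ and with colour $2$ otherwise (so $X_1$, together with every unclaimed vertex, gets colour $2$). For any $A\in\mathcal{F}$: since $M_1$ is a transversal, $A$ has a colour-$1$ vertex; since $X_1$ is a transversal, $A$ contains some $v\in X_1$, and $v\notin M_1$ makes it a colour-$2$ vertex. So no winning set is monochromatic, giving the required proper 2-colouring.

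The main obstacle is making the coupling of $P_1$ and $P_2$ fully precise: one must fix an explicit interleaving of the turns of the two plays so that each Breaker move in one play becomes available to be copied into the other before that play has advanced past it, and verify that the bookkeeping invariants above are genuinely preserved at every step rather than merely morally true. The hypothesis that $\mathcal{G}$ is open for Breaker is used here only to keep the winning condition honest, so that a Breaker win really is the finitely-witnessed event of having secured a transversal; this is what lets the argument deliver its transversal conclusions even when $B$ and the members of $\mathcal{F}$ are infinite and the plays last $\omega$ moves.
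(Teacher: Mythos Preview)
Your proof is correct and follows essentially the same strategy-stealing idea as the paper: there, within a single play, Maker makes an arbitrary opening move and thereafter follows Breaker's winning strategy $\sigma$ against Breaker's own use of $\sigma$, so that both players' claimed sets end up as transversals of $\mathcal{F}$, and colouring Maker's vertices white and all others black gives the proper 2-colouring. Your explicit two-play coupling $P_1,P_2$ simply names the ``imagined game'' implicit in the paper's single-game strategy-stealing, and the invariants $X_1\subseteq M_2$, $X_2\subseteq M_1$ you flag are exactly what the paper dispatches with the observation that $\sigma$, being a second-player strategy, will never direct Maker to a vertex already held by Breaker.
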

\begin{proof}
Suppose that $\sigma$ is a winning strategy for Breaker, and let Breaker play according to it, so that at least one vertex in each hyperedge in $\mathcal{F}$ is chosen by Breaker, who does that after finitely many turns.

Moreover, let Maker make an arbitrary initial move, which cannot be disadvantageous, and then play according to $\sigma$, as if the arbitrary move was not placed on the board. Whenever $\sigma$ prescribes to choose a vertex already marked by Maker himself, then he can analogously make another arbitrary move.
Note that Maker's extra move is the only one that the strategy $\sigma$ does not consider, thus $\sigma$ will never prescribe Maker to choose a vertex already marked by Breaker.

Then, Maker will satisfy Breaker's winning condition; that is, given any hyperedge in $\mathcal{F}$, Maker will choose one vertex from it, and will do that after finitely many turns.

\medskip

Note that if $B$ is infinite, then not all vertices will necessarily be chosen by either player at any finite stage; if a vertex does not belong to any hyperedge, then it will never be chosen.

Moreover, if $B$ is uncountable, then at most countably many vertices will be marked by either player, as all plays are countable sequences of moves, by Definition \ref{def_game}.\ref{def_play}.

However, by the definition of $\sigma$, all hyperedges comprise one vertex marked by Maker and one by Breaker, both chosen at some finite stages of the game.

\medskip

Assign the colour white to all the vertices marked by Maker, and the colour black to all the vertices marked by Breaker or left unchosen.\footnote{So that we do not need the Axiom of Choice here.}

This assignment is a proper 2-colouring of $(B,\mathcal{F})$, as needed.
\end{proof}

\section{Strategy-stealing in stone-placing games}\label{app_str_steal_st_pl}

We generalise the symmetry condition of the finite Hex board highlighted in Remark \ref{rmk_finite_hex}.\ref{symmetry_board} to construct the Strategy-stealing argument for stone-placing games.

\begin{definition}\label{def_strict_not_open}A game is \emph{strictly not open} for one player if that player can only win by playing for infinitely many turns; that is, if the winning condition of that player has empty interior, and so does not contain any basic open set, within the space of plays.
\end{definition}

\begin{proposition}[Strategy-stealing]\label{stone_pl_strategy_stealing}
Let $\mathcal{G}$ be a stone-placing game played on $(B,\mathcal{F},\mathcal{S})$, strictly not open for both players. Say that there is a fixed-point free involution $g$ of the board $B$.
Suppose that $g$ is such that for each $s\in\mathcal{S}$ there is some $f\in\mathcal{F}$ for which $f\subset g[s]$.

Then, the second player in $\mathcal{G}$ does not have a winning strategy.
\end{proposition}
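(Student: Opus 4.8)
The plan is to adapt the strategy-stealing argument of Proposition~\ref{strategy_stealing} to the involution $g$. Suppose toward a contradiction that the second player has a winning strategy $\tau$. I will manufacture from $\tau$ a first-player strategy $\tau'$ which forces the first player to occupy, over the course of \emph{any} play, all the vertices of some hyperedge in $\mathcal{F}$; since $\tau$ is winning this will be absurd.

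The strategy $\tau'$ is defined by having the first player open with an arbitrary move $b_0$ and then run, in his head, a \emph{shadow} play of $\mathcal{G}$ in which he himself occupies the second-player seat and plays $\tau$. The two plays are linked through $g$: each real move $c$ made by the opponent is recorded in the shadow play as the shadow first-player move $g(c)$, and whenever $\tau$ answers in the shadow play with the shadow second-player move $d$, the first player replies in the real play with $g(d)$ --- unless $g(d)$ is already occupied in the real play, which (as $g$ is a bijection and the shadow first- and second-player moves are disjoint) can happen only when $g(d)$ is the current ``extra'' stone, in which case the first player instead plays a fresh arbitrary vertex, which becomes the new extra stone, and treats the pair $(d,g(d))$ as already matched. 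First I would verify that $\tau'$ is a well-defined legal first-player strategy (the board is infinite here, so a fresh vertex is always available), that the induced shadow play is a genuine legal play of $\mathcal{G}$ conforming to $\tau$ --- this is where the hypotheses $g\circ g=\mathrm{id}$ and $g$ fixed-point free are used, to see that $c\mapsto g(c)$ and $d\mapsto g(d)$ preserve distinctness and the first-player/second-player disjointness of moves --- and that at every stage the real first player's set of stones contains $g$ of the shadow second player's set of stones.

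With these invariants in hand the conclusion follows quickly. Because $\tau$ is a winning second-player strategy and the shadow play conforms to it, the shadow second player wins the shadow play, hence occupies every vertex of some $s\in\mathcal{S}$; by the invariant the real first player has then occupied all of $g[s]$, and by the hypothesis on $g$ there is some $f\in\mathcal{F}$ with $f\subseteq g[s]$, so the real first player has completed a winning set of his own. Now apply this with the opponent playing $\tau$: in the resulting play the first player completes some $f\in\mathcal{F}$. But $\mathcal{G}$ is strictly not open for both players, which forces every hyperedge in $\mathcal{F}$ and in $\mathcal{S}$ to be infinite (cf.~Remark~\ref{rmk_open_st_pl} and Definition~\ref{def_strict_not_open}); hence every win takes infinitely many moves, and a play in which \emph{both} players complete a winning set can be assigned to neither, i.e.~is a draw. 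Thus this play is not a win for the second player, contradicting that $\tau$ is winning. As after Proposition~\ref{strategy_stealing}, I would close by noting this shows only that the second player has no winning strategy, not that the first player wins.

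The step I expect to be the main obstacle is the bookkeeping around the extra stone and the arbitrary substitutions: one must check that a substitution never breaks the correspondence, i.e.~that whenever $\tau$ calls for a shadow move $d$ with $g(d)$ already on the real board, that $g(d)$ was a vertex the first player had placed ``for free,'' so that matching $(d,g(d))$ costs nothing and the invariant ``real first-player stones $\supseteq g[\text{shadow second-player stones}]$'' survives. The secondary delicate point is making precise, from ``strictly not open for both players,'' that no play can count as a second-player win once the first player has completed one of his own winning sets.
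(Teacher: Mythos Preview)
Your proof is correct, but you take a more elaborate route than the paper does. You run a genuine strategy-stealing argument: the first player maintains a shadow play in which he sits in the second seat and applies $\tau$, translating moves through $g$ both ways. The paper instead gives a direct \emph{mirroring} argument in the style of Proposition~\ref{stone_pl_mirroring}: after an arbitrary opening move, whenever the second player marks $v$, the first player marks $g(v)$; no shadow game is needed, because if the second player (using her putative winning strategy) eventually occupies all of some $s\in\mathcal{S}$, then the first player has occupied all of $g[s]\supseteq f$ for some $f\in\mathcal{F}$, and strict non-openness yields the contradiction.

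What each approach buys: the paper's mirroring is shorter and never needs to invoke $\tau$ on a fictitious history --- the hypothesis on $g$ already guarantees that copying the opponent through $g$ produces a first-player winning set whenever the opponent produces a second-player winning set. Your shadow-play argument, on the other hand, is closer to the classical template of Proposition~\ref{strategy_stealing} and actually proves slightly more (the manufactured $\tau'$ completes some $f\in\mathcal{F}$ against \emph{every} opponent, not just against $\tau$); you also handle the extra-stone bookkeeping and the collision cases more carefully than the paper, which glosses over the possibility that $g(v)$ coincides with the first player's arbitrary initial move. Your identification of the last step --- that under strict non-openness a play in which both players complete a winning set cannot be a second-player win --- is exactly the point the paper leaves implicit in its one-word ``contradiction.''
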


Hence, we can informally interpret the involution $g$ as a symmetry (or better, a reassemblage) of the board that allows the first player to act as the second player on the new board.

\begin{proof}[Proof of Proposition \ref{stone_pl_strategy_stealing}]

Let the first player make an arbitrary initial move, which is advantageous, and then play in the following way; 
when the second player chooses a vertex $v\in B$, then the first player, in the next turn, marks $g(v)$.

Note that $s\in\mathcal{S}$ is infinite by strict non-openness of $\mathcal{G}$, and that the second player will mark all the vertices of $s$, each after finitely many turns.
Thus, the first player will eventually mark all the vertices of $g[s]$.

Therefore, the first player will also win by marking all the vertices of some $f\in\mathcal{F}$ such that $f\subset g[s]$, which is a contradiction.
\end{proof}

\begin{remark}
Observe that we cannot relax the assumption of strict non-openness in Proposition \ref{stone_pl_strategy_stealing}.

Otherwise, the second player could pick a finite $s\in\mathcal{S}$, which preferably does not contain the initial move of the first player, and then could proceed to mark all the vertices in $s$, so that the first player would be able to fully satisfy his winning condition $f\subset g[s]$ only with his move that follows the last move of the second player; that is a move too late, so that the second player actually wins.

Note that strengthening the condition by imposing that the involution $g$ is such that for each $s\in\mathcal{S}$ there is some $f\in\mathcal{F}$ for which $f\subsetneq g[s]$ is still not enough; there could be an appropriate order in which the second player could mark the vertices of a finite $s\in\mathcal{S}$ that defeats the opponent.
\end{remark}

\newpage
\phantomsection
\addcontentsline{toc}{chapter}{Bibliography}
\bibliographystyle{apalike}
\bibliography{main}

\begin{thebibliography}{}

\bibitem[Adler et~al., 2016]{adler16}
Adler, A., Daskalakis, K., and Demaine, E. (2016).
\newblock The complexity of hex and the jordan curve theorem.
\newblock In Chatzigiannakis, I., Mitzenmacher, M., Rabani, Y., and Sangiorgi,
  D., editors, {\em 43rd International Colloquium on Automata, Languages, and
  Programming (ICALP 2016)}, number XXX in Leibniz International Proceedings in
  Informatics. Dagstuhl Publishing.

\bibitem[Bj{\"o}rnsson et~al., 2006]{hayward06}
Bj{\"o}rnsson, Y., Hayward, R., Johanson, M., and van Rijswijck, J. (2006).
\newblock Dead cell analysis in hex and the shannon game.
\newblock In Bondy, A., Fonlupt, J., Fouquet, J., Fournier, J., and
  Ramírez~Alfonsín, J., editors, {\em Graph Theory in Paris}, Trends in
  Mathematics, pages 45--59. Birkh{\"a}user Basel.

\bibitem[Bonnet et~al., 2016]{bonnet16}
Bonnet, {\'E}., Jamain, F., and Saffidine, A. (2016).
\newblock On the complexity of connection games.
\newblock {\em Theoretical computer science}, 644(C):2--28.

\bibitem[Browne, 2000]{browne00}
Browne, C. (2000).
\newblock {\em Hex strategy: making the right connections}.
\newblock CRC Press.

\bibitem[Conway, 1996]{conway96}
Conway, J.~H. (1996).
\newblock The angel problem.
\newblock In Nowakowski, R.~J., editor, {\em Games of No Chance}, volume~29 of
  {\em Proceedings of the MSRI Workshop on Combinatorial Games}, pages 3--12.
  Cambridge University Press.

\bibitem[Evans and Hamkins, 2014]{hamkins14}
Evans, C.~D.~A. and Hamkins, J.~D. (2014).
\newblock Transfinite game values in infinite chess.
\newblock {\em Integers}, 14:Paper No.~G2, 36.

\bibitem[Evans et~al., 2017]{hamkins17}
Evans, C.~D.~A., Hamkins, J.~D., and Perlmutter, N.~L. (2017).
\newblock A position in infinite chess with game value $\omega^4$.
\newblock {\em Integers}, 17:Paper No.~G4, 22.

\bibitem[Gale, 1979]{Gale79}
Gale, D. (1979).
\newblock The game of hex and the brouwer fixed-point theorem.
\newblock {\em The American Mathematical Monthly}, 86(10):818--827.

\bibitem[Gale and Stewart, 1953]{gs53}
Gale, D. and Stewart, F.~M. (1953).
\newblock Infinite games with perfect information.
\newblock In Kuhn, H.~W. and Tucker, A.~W., editors, {\em Contributions to the
  Theory of Games}, volume~II, pages 245--266. Princeton University Press.

\bibitem[Gardner, 1988]{gardner88}
Gardner, M. (1988).
\newblock {\em Hexaflexagons and other mathematical diversions -- The First
  Scientific American Book of Puzzles and Games}.
\newblock University of Chicago Press.

\bibitem[Hayward and Toft, 2019]{hayward19}
Hayward, R.~B. and Toft, B. (2019).
\newblock {\em Hex, Inside and Out: The Full Story}.
\newblock CRC Press.

\bibitem[Hefetz et~al., 2014]{hefetz14}
Hefetz, D., Krivelevich, M., Stojakovi{\'c}, M., and Szab{\'o}, T. (2014).
\newblock {\em Positional Games}.
\newblock Oberwolfach Seminars. Springer Basel.

\bibitem[Hein, 1942]{hein42}
Hein, P. (1942).
\newblock Vil de laere polygon?
\newblock {\em Politiken}, Copenhagen, Dec.~26.
\newblock Translated into English by \cite[\textsection 3.1]{hayward19}.

\bibitem[{HexWiki}, 2021]{hexwiki}
{HexWiki} (2021).
\newblock Strong connection.
\newblock \url{https://www.hexwiki.net/index.php/Strong_connection} (accessed
  1st July 2021).

\bibitem[Kunen, 2013]{kunen13}
Kunen, K. (2013).
\newblock {\em Set theory}, volume~34 of {\em Studies in Logic}.
\newblock College Publications, London, revised edition.

\bibitem[Maehara, 1984]{maehara84}
Maehara, R. (1984).
\newblock The jordan curve theorem via the brouwer fixed point theorem.
\newblock {\em The American mathematical monthly}, 91(10):641--643.

\bibitem[M{\'a}th{\'e}, 2007]{andras07}
M{\'a}th{\'e}, A. (2007).
\newblock The angel of power 2 wins.
\newblock {\em Combinatorics, Probability and Computing}, 16(3):363--374.

\bibitem[Nash, 1952]{nash52}
Nash, J. (1952).
\newblock Some games and machines for playing them.
\newblock {\em Technical Report D-1164}.
\newblock Rand Corporation.
  \url{https://www.rand.org/content/dam/rand/pubs/documents/2015/D1164.pdf}.

\bibitem[Schachner, 2019]{schachner19}
Schachner, M. (2019).
\newblock The game of hex: a study in graph theory and algebraic topology.
\newblock {\em Mathematics REU 2019}.
\newblock University of Chicago.
  \url{http://math.uchicago.edu/~may/REU2019/REUPapers/Schachner.pdf}.

\bibitem[Uiterwijk and Moesker, 2009]{uiterwijk09}
Uiterwijk, J. and Moesker, K. (2009).
\newblock Mathematical modelling in twixt.
\newblock In L{\"o}we, B., editor, {\em Proceedings Logic and the Simulation of
  Interaction and Reasoning 2 Workshop (LSIR2)}, pages 29--35. Pasadena: IJCAI.

\bibitem[{van den Herik} et~al., 2002]{herik02}
{van den Herik}, H., Uiterwijk, J.~W., and {van Rijswijck}, J. (2002).
\newblock Games solved: Now and in the future.
\newblock {\em Artificial Intelligence}, 134(1):277--311.

\end{thebibliography}

\end{document}